\Crefname{hypothesis}{Hypothesis}{Hypotheses}
\Crefname{fact}{Fact}{Facts}
\newcommand{\opnm}[1]{\operatorname{#1}}
\newcommand{\dd}{ {\opnm d} }
\newcommand{\phiinc}{ \phi^{\opnm {inc}} }
\newcommand{\phitot}{ \phi^{\opnm {tot}} }
\newcommand{\pv}{\operatorname{p.\!v.}}
\newcommand{\bbR}{\mathbb{R}}
\newcommand{\bx}{{\mathbf{x}}}
\newcommand{\by}{{\mathbf{y}}}
\newcommand{\bz}{{\mathbf{z}}}
\newcommand{\br}{{\mathbf{r}}}
\newcommand{\bn}{{\mathbf{n}}}
\newcommand{\brprime}{{\mathbf{r}'}}
\newcommand{\brprimeprime}{{\mathbf{r}''}}
\newcommand{\btau}{{\boldsymbol{\tau}}}
\newcommand{\nr}{{\bn_\br}}
\newcommand{\taur}{{\btau_\br}}
\newcommand{\GS}{G_{\opnm{S}}}
\newcommand{\GSfg}{\GS}
\newcommand{\Gphi}{G_\phi}
\newcommand{\threed}{{\opnm{3d}}}
\newcommand{\brthree}{\mathbf{r}_{\opnm{3d}}}
\newcommand{\VS}{\mathcal{V}_{\opnm S}}
\newcommand{\SSi}{\mathcal{S}_{\opnm S}}
\newcommand{\DS}{\mathcal{D}_{\opnm S}}
\newcommand{\KS}{\mathcal{K}_{\opnm S}}
\newcommand{\TS}{\mathcal{T}_{\opnm S}}
\newcommand{\Vphi}{\mathcal{V}_{\phi}}
\newcommand{\Kphi}{\mathcal{K}_\phi}
\newcommand{\Sthreed}{\mathcal{S}_\threed}
\newcommand{\Aop}{\mathcal{A}}
\newcommand{\Bop}{\mathcal{B}}
\newcommand{\Dop}{\mathcal{D}}
\newcommand{\Lop}{\mathcal{L}}
\newcommand{\Mop}{\mathcal{M}}
\newcommand{\Kop}{\mathcal{K}}
\newcommand{\lp}{\left(}
\newcommand{\rp}{\right)}
\newcommand{\GL}{G_\text{L}}
\newcommand{\GB}{G_\text{B}}
\newcommand{\DScgprime}{\DS'}
\newcommand{\Aint}{\mathcal{A}_{\opnm{int}}}
\newcommand{\Aext}{\mathcal{A}_{\opnm{ext}}}
\newcommand{\Lint}{\mathcal{L}_{\opnm{int}}}
\newcommand{\Lext}{\mathcal{L}_{\opnm{ext}}}
\numberwithin{equation}{section}
\title{Surface layers and linearized water waves: a boundary integral equation framework}
\begin{document}
	
	\author{Travis Askham\thanks{Department of Mathematical Sciences, New Jersey Institute of Technology, Newark, New Jersey (\email{askham@njit.edu}).} \and Tristan Goodwill\thanks{Department of Statistics and CCAM, University of Chicago, Chicago, Illinois 
  (\email{tgoodwill@uchicago.edu}).}
  \and Jeremy Hoskins\thanks{Department of Statistics and CCAM, University of Chicago, Chicago, Illinois 
  (\email{jeremyhoskins@uchicago.edu}).}
\and 
Peter Nekrasov\thanks{Committee on Computational and Applied Mathematics, University of Chicago, Chicago, Illinois (\email{pn3@uchicago.edu}).}
\and Manas Rachh\thanks{Department of Mathematics, Indian Institute of Technology Bombay, Mumbai, India 
(\email{mrachh@iitb.ac.in}).}}
	
	\maketitle
	
	\begin{abstract}
		The dynamics of surface waves traveling along the boundary of a liquid medium are changed by the presence of floating plates and membranes, contributing to a number of important phenomena in a wide range of applications. Mathematically, if the fluid is only partly covered by a plate or membrane, the order of derivatives of the surface-boundary conditions jump between regions of the surface. In this work, we consider a general class of problems for infinite depth linearized surface waves in which the plate or membrane has a compact hole or multiple holes. For this class of problems, we describe a general integral equation approach, and for two important examples, the partial membrane and the polynya, we analyze the resulting boundary integral equations. In particular, we show that they are Fredholm second kind and discuss key properties of their solutions. We develop flexible and fast algorithms for discretizing and solving these equations, and demonstrate their robustness and scalability in resolving surface wave phenomena through several numerical examples.
	\end{abstract}
	
	\section{Introduction} \label{sec:intro}
	Linear water wave theories assume that water is an inviscid, incompressible, and 
	irrotational fluid and that the amplitudes of surface waves are small relative
	to their wavelength and the depth of the water. These assumptions allow the fluid motion to be modeled by potential flow, with the kinematic and dynamic boundary conditions prescribed on a fixed surface. The simplicity of the resulting equations
    makes them well-suited to analytical and numerical calculations, and the theory has a long history of success in predicting physical phenomena at many scales, from the coarsening and flexing of biological membranes \cite{crawford1987viscoelastic} to the arrival of transoceanic infragravity waves at Antarctic ice shelves \cite{bromirski2010transoceanic}. 
    
    Building on more analytically-oriented works and contemporary experiments\footnote{In his {\em Tides and Waves}, 
	Airy is remarkably critical of  ``{\em unnecessarily} obscure'' prior work by Laplace and dismissive of 
	``entirely uninteresting'' prior work by Poisson and Cauchy. While he was 
	not alone in these opinions \cite{russell1845report}, their work was influential on the development of wave theories. Airy was also apparently unaware of some related 
	prior work by his contemporaries, including George Green and Philip Kelland. For a review of these historical 
	developments, we refer the interested reader to~\cite{craik2004origins}.}, George Airy published a treatise in 1841 on the motion of the tides and waves in canals~\cite{airy1841tides}, which contained the essential ingredients of modern linear wave theory~\cite{dean1991water}. This theory was later extended by William Thomson 
	in 1871 to include the effects of surface tension \cite{thomson1871xlvi} and by Greenhill in 1886 to include 
	flexural stresses due to elastic plates \cite{greenhill1886wave}. These developments introduced new 
	terms in the dynamic boundary condition at the surface of the fluid in the form of surface differential operators.

	A majority of studies assume that the surface of the fluid has constant properties. However, the effect of spatially varying material properties on the surface is important in many applications. For example, it is well known that the propagation of capillary waves is severely attenuated by the presence of an oil slick, an effect first observed by Reynolds in 1880 \cite{reynolds1880oil}. In the same way, the propagation of sea swell is attenuated by the sea ice pack in the polar regions \cite{wadhams1988,ice6}. 
	
	An interesting class of problems arises when the fluid boundary is divided into regions with different types of surface effects, so that the corresponding boundary conditions in these regions contain differential operators of different orders. 
	This can occur due to the presence of either an opening or occlusion in a membrane or an ice sheet, where the surface dynamics are drastically different from the rest of the surface. Some prototypical examples include polynyas \cite{bennetts2010wave,shi2019interaction}, ice floes \cite{meylan1994response,meylan1996response,meylan2002wave,bennetts2009wave}, open cracks and leads \cite{squire2007ocean,zeng2021flexural}, porous membranes \cite{koley2017oblique}, or partial membranes \cite{karmakar2008gravity,yip2001wave, manam2012mild}. The jumps in the order of the boundary operators generate substantially different dynamics from the continuous problems, including band gaps \cite{chou1998band}, boundary layers \cite{dore1974edge}, and superlensing \cite{hu2004superlensing}. Similar problems also arise in the modeling of acoustic boundary layers in loudspeaker components \cite{berggren2018acoustic}.
	
	The aim of the present work is to demonstrate that, particularly for ``deep water'' models in which the fluid occupies a half-space,
	there is an effective, simple, and unified approach to the numerical solution of these mixed-order boundary condition problems 
	based on integral equation formulations. More concretely, we consider linear water waves in the time harmonic setting,
	in which the spatial dependence of the fluid flow is written in terms of a velocity potential $\phi : H^- \to \mathbb{C}$, 
	where $H^-$ is defined as the lower half-space $H^- := \{(x,y,z) \in \mathbb{R}^3 \, | \, z < 0 \}$ whose boundary, $D$, we identify with $\mathbb{R}^2$. We let $\Omega$ be a bounded domain on the fluid surface $D$ and let $D \setminus \overline{\Omega}$ denote 
	the exterior of this region, also on the surface. The boundary value problem takes the form:
	\begin{equation}
		\begin{cases}
			\begin{aligned}
				(\Delta + \partial_z^2) \phi &= 0 \, ,  \, && \text{ in } H^- \, , \\
				\Aext[\phi, \partial_z \phi ] &= 0 \, , \, && \text{ in } D \setminus \overline{\Omega} \, ,  \\
				\Aint [\phi, \partial_z \phi ] &= f \, , \, && \text{ in }  \Omega \, ,  \\
				\mathcal{B}[\phi, \partial_z \phi] &= g \, , \, && \text{ on } \partial \Omega \, , 
			\end{aligned}
		\end{cases} \label{eq:bvps}
	\end{equation}
	where $\Delta := \partial_x^2 + \partial_y^2$ is the Laplace operator in two dimensions. See~\Cref{fig:setup} for an illustration. The first equation corresponds to an incompressibility condition in the fluid bulk, the second and third conditions correspond to dynamic boundary conditions on the exterior and interior regions of the fluid surface, respectively, and the last equation corresponds to an in-plane boundary condition which may be vector-valued, depending on the order of the operators $\Aext$ and $\Aint$. 
	
	The inputs to $\Aext$ and $\Aint$ should be understood as the traces of $\phi$ and $\partial_z\phi$ on $D$.
	Note that the trace of $\partial_z\phi$ on $D$ is the vertical velocity of the surface of the fluid, which is related to the vertical displacement through a kinematic condition, while $\phi$ is related to the pressure of the fluid through Bernoulli's principle. These two quantities are connected through $\Aext$ and $\Aint$, which have the following general form: 
	\begin{align}
		\Aop[\phi, \partial_z \phi ] = p(-\Delta) \partial_z \phi - \phi \; ,  \label{eq:Aphi}
	\end{align}
	where $p$ is a polynomial. Quadratic terms in $p$ account for flexural effects, linear terms for
	elastic and surface tension effects, and constant terms for inertial and gravitational effects. 
	In this work, we assume that the polynomial degree is greater for the $\Aext$ operator than it is
	for the $\Aint$ operator, though the case where this condition is reversed can be treated with similar tools.
	
	\begin{figure}
		\centering
		\includegraphics[width=0.6\linewidth]{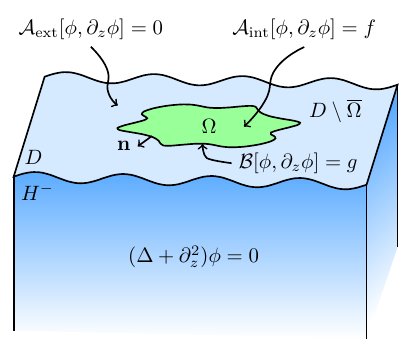}
		\caption{General setup of the problem.}
		\label{fig:setup}
	\end{figure}
	
	Problems of a similar flavor have been considered in a number of different contexts. Many studies have examined the effects of continuous changes in the surface properties such as the thickness of an ice cover \cite{porter2004approximations,williams2004oblique,bennetts2007multi,nekrasov2023ocean,askham2025integral} or variations in surface tension \cite{chou1994surface,shen2017marangoni}. These papers typically look at simplified geometries or idealized settings using semi-analytical approaches. It is also possible to have a discontinuous jump in the coefficients on the surface, where it is typically necessary to prescribe additional transmission or continuity conditions. Some early work looked at discontinuities in the surface tension for simple geometries, such as on an infinite flat boundary with the help of a Fourier-type integral representation \cite{gou1993capillary} or the Wiener-Hopf method \cite{dore1974edge}, or on circular domains with the help of truncated Bessel series \cite{chou1995capillary} or the first order Born approximation \cite{chou1994surface}.
	
	Perhaps the most closely related method is that of \cite{bennetts2010wave}, which focuses on the polynya problem in the finite depth case. In their approach, the solutions in the ice-covered and ice-free regions are expanded in terms of vertical modes. Green's identities are then employed to reduce the problem to a system of integro-differential equations along the boundary of these two regions, which are solved using a Galerkin method. While this method offers the flexibility needed to incorporate draft and finite depth, it involves discretizing the fluid where no significant dynamics occur. In contrast, our approach seeks to reduce the problem to an integral equation defined solely on the surface of the polynya or, more generally, the object responsible for scattering. We remark that it is similar in spirit to the \emph{surface wave preconditioners} developed in \cite{kleingordon_waveguide, dirac_waveguide} for singular waveguides and topological insulators.
	
	The numerical approach advocated here is characterized by the use of a nested integral representation of the 
	solution. The original three dimensional problem in $D$ is first reduced to a two-dimensional integro-differential 
	equation on the surface $ D$ using a \emph{single layer} integral representation for $\phi$. The resulting integro-differential
	equation, defined on the infinite interface $ D$, is further reduced to the compact set $\Omega$  
	using the Green's function for the integro-differential analog of the $\Aext$ operator. We show that it
	is relatively straightforward to select this second representation so that the resulting equation is a Fredholm second
	kind integral equation system defined on $L^2(\Omega)\times L^2(\partial \Omega)^m$, where $m$ denotes the number of boundary conditions imposed by $\mathcal{B}$. More details of this overall framework are provided in \Cref{sec:reduction2d} and the
	Green's functions and their properties are derived in \Cref{sec:green}. 
	We derive appropriate representations for two common applications and discuss the invertibility of
	the resulting integral equation systems in \Cref{sec:reductioninteq}. 
	
	Once suitable integral representations are defined for these problems, the corresponding integral equations can be 
	discretized using standard tools. We describe a discretization scheme in \Cref{sec:discretization},
	based on a high-order triangulation of $\Omega$ and panelization of $\partial \Omega$. To treat a mild singularity
    which develops in $\partial_z\phi$ near the interface, we apply adaptive refinement near the boundary of $\Omega$. The resulting linear systems are dense and require acceleration to scale to larger problems. While the system matrices have similar rank structure
	to the matrices resulting from integral equation representations of PDE solutions, namely that 
	submatrices corresponding to well-separated sets of source and target points are low rank, many of the common
	fast algorithm techniques do not apply directly to these systems. We describe a simple and efficient scheme for 
	accelerating matrix-vector multiplication for these systems based on pre-corrected FFTs in \Cref{sec:algorithm}.
	The scheme uses \emph{proxy annuli} to efficiently compress interactions of points which are physically well-separated. Similar ideas have been employed for compressing kernels arising in Gaussian process regression, see~\cite{minden2017fast}, for example. Finally, in \Cref{sec:examples} we present several numerical examples illustrating the efficacy of the proposed framework. We conclude with \Cref{sec:discussion}, in which we outline directions for future work.
	
	\section{Overview of the method} \label{sec:reduction2d}
	
	We represent the velocity potential $\phi$ in the fluid domain using the three-dimensional Laplace single layer $\Sthreed$ applied to an unknown density $\sigma$ defined on the surface (see also~\cite{chou1994surface,fox1999green,de2018capillary,oza2023theoretical,askham2025integral}):
	\begin{equation}
		\phi(\brthree) = \Sthreed[\sigma] (\brthree) :=  \pv \int_{\mathbb{R}^2} \frac{1}{4\pi | \brthree - \brthree'|} \sigma(\brprime) \, \dd A(\br') \, , \label{eq:Sdef}
	\end{equation}
	where $\brthree = [x, y, z]^T ,   \brthree' = [x', y', 0]^T , \, \brprime = [ x', y' ]^T $, and the principal value integral ($\pv$) is defined as
	\begin{equation*}
		\pv \int_{\mathbb{R}^2} f(\brprime) \, \dd A(\brprime) = \lim_{R \to \infty} \int_{|\br| \leq R} f(\brprime) \, \dd A(\brprime) \, .
	\end{equation*}
	The principal value definition can accommodate slowly decaying but oscillatory densities, like the ones we will obtain for certain parameters. Applying the ansatz~\cref{eq:Sdef} ensures that $\phi$ satisfies Laplace's equation and decays in the fluid region $H^-.$ All that remains is to choose the density $\sigma$ to satisfy the boundary conditions on $D.$

	The standard jump relations for the Laplace single layer potential \cite{kress1999linear} imply that the limit of $\Sthreed[\sigma]$ can be taken continuously to the boundary for any sufficiently smooth density $\sigma$ with mild conditions on the decay and oscillations at infinity, i.e. letting $\brthree = [x,y,0]^T , \, \br = [x,y]^T , \, \mathbf{z} = [0,0,1]^T$, and $h > 0$ we have
	\begin{equation*}
		\lim_{\substack{h \to 0}} \mathcal{S}_{\threed}[\sigma](\brthree - h \mathbf{z}) = \mathcal{S}_{\threed}[\sigma](\brthree) \, ,
	\end{equation*}
	while the normal derivative is given by
	\begin{equation*}
		\lim_{\substack{h \to 0}} \partial_z \mathcal{S}_{\threed}[\sigma](\brthree - h \mathbf{z}) =
		\frac{1}{2} \sigma(\br) + \partial_z \mathcal{S}_{\threed}[\sigma](\brthree) = \frac{1}{2} \sigma(\br) \; .
	\end{equation*}
	Substituting the single layer potential representation for $\phi$ into \eqref{eq:Aphi} and applying these jump relations allows us to write the boundary operator \eqref{eq:Aphi} in terms of the density $\sigma(\br)$. We call this new integro-differential operator $\Lop[\sigma]$:
	\begin{equation} 
		\mathcal{L}[\sigma](\br) := \mathcal{A}[\Sthreed[\sigma],\sigma/2](\br) = \frac{1}{2} p(-\Delta) \sigma(\br) - \int_{\mathbb{R}^2} \frac{1}{4\pi | \br - \brprime|} \sigma(\brprime) \, \dd A(\brprime) \; . \label{eq:Lsigma}
	\end{equation}
	Similarly, substituting our ansatz for $\phi$ into the in-plane boundary operator $\Bop$ 
    in \eqref{eq:bvps} yields another operator $\Mop[\sigma]:= \Bop[\Sthreed[\sigma],\sigma/2]$. This converts the three-dimensional boundary value problem problem given by \eqref{eq:bvps} into a two-dimensional surface problem of the form:
	\begin{equation}
		\begin{cases}
			\begin{aligned}
				\Lext[\sigma] &= 0 \, , \, && \text{ in } D \, \backslash \, \overline{\Omega} \, ,  \\
				\Lint [\sigma] &= f \, , \, && \text{ in }  \Omega \, ,  \\
				\Mop[\sigma] &= g \, , \, && \text{ on } \partial \Omega \, , 
			\end{aligned}
		\end{cases} \label{eq:reducedbvp}
	\end{equation}
	where $\Lext$ and $\Lint$ are integro-differential operators of the general form \eqref{eq:Lsigma}, with the order of the highest derivative of $\Lext$ being strictly greater than the order of the highest derivative in $\Lint$, and $\Mop$ a possibly vector-valued operator. 
    
    In order to solve this problem, we introduce the following new ansatz involving both a \emph{surface-volume} density $\mu \in L^2(\Omega)$ and a \emph{surface-boundary} density $\eta \in (L^2(\partial \Omega))^m$:
	\begin{equation}\sigma(\br) = \int_\Omega V(\br,\brprime) \mu(\brprime) \, \dd A(\brprime) + \int_{\partial \Omega} B (\br,\brprime) \cdot \eta (\brprime) \, \dd s(\brprime) \, , \label{eq:ansatz}
	\end{equation}
	where $V(\br,\brprime) : \mathbb{R}^2 \times \mathbb{R}^2 \to \mathbb{C} $, $B(\br,\brprime) : \mathbb{R}^2 \times \mathbb{R}^2 \to \mathbb{C}^m $, and $m$ is the number of vector components of $\Mop$.
	If the kernels $V(\br,\brprime)$ and $B(\br,\brprime)$ are constructed out of the Green's function corresponding to the exterior operator, then  the ansatz \eqref{eq:ansatz} will satisfy the homogeneous equation in the exterior. We defer a detailed proof of this to \Cref{sec:green}. Thus, the only constraints on the densities $\mu$ and $\eta$ result from the failure of the ansatz \eqref{eq:ansatz} to satisfy the interior equation $\Lint[\sigma] = f$ in $\Omega$ and the boundary condition $\Mop[\sigma] = g$ on $\partial \Omega$. Substituting our ansatz into these two equations results in a system of coupled surface-volume and surface-boundary integral equations defined on $L^2(\Omega) \times (L^2(\partial\Omega))^m \to L^2(\Omega) \times (L^2(\partial\Omega))^m$: 
	\begin{align}
		T_0 \mu(\br) + \int_\Omega  K_{00} (\br,\brprime) \mu(\brprime) \, \dd A(\brprime) + \sum_{j=1}^m \int_{\partial \Omega}  K_{0j}(\br,\brprime)   \eta_j (\brprime) \, \dd s(\brprime) &= 
		f(\br) \, , \label{eq:vie} \\ 
		T_{i} \eta_i(\br) + \int_\Omega K_{i0}(\br,\brprime) \mu(\brprime) \, \dd A(\brprime) + \sum_{j=1}^m \int_{\partial \Omega} K_{ij}(\br,\brprime)  \eta_j (\brprime) \, \dd s(\brprime) &= g_i(\br)  \, , \label{eq:bie}
	\end{align}
	for $i = 1, \dots , m$. There are two different ways in which the identity term $T$ appears. In the surface-volume equation \eqref{eq:vie} the term $T_0 \mu$ appears because the interior operator $\Lint$ applied to the kernel $V(\br,\brprime)$ contains a delta function which is handled explicitly. In the surface-boundary equations \eqref{eq:bie} the identity terms $T_i \eta_i$ appear when taking the exterior traces of the layer potentials at the boundary. In other words, we define $T_i \eta_i$ as the difference between the off-surface limit of the integral operator and its on-surface value, i.e. for $\br \in D \setminus \overline{\Omega}$ and $\br_0 \in \partial \Omega$ we have:
	\begin{equation*}
		T_i\eta_i(\br_0) := \lim_{\br\to \br_0} \int_{\partial \Omega} K_{ii}(\br,\brprime)\eta_i(\brprime) \, \dd A(\brprime) -  \pv \int_{\partial \Omega} K_{ii}(\br_0,\brprime)\eta_i(\brprime) \, \dd A(\brprime)
	\end{equation*}
	where the limit is taken in the normal direction. Then, the kernels in the integral equations \eqref{eq:vie}-\eqref{eq:bie} are defined as:
	\begin{align*}
		K_{00}(\br,\brprime) &= \Lint [V(\ \cdot \ ,\brprime)](\br) - T_0 \delta(\br,\brprime) \, , \\
		K_{0j}(\br,\brprime) &= \Lint  [B_j(\ \cdot \ ,\brprime) ] (\br) \, , \\
		K_{i0}(\br,\brprime) &= \Mop_i V(\br,\brprime) \, , \\
		K_{ij}(\br,\brprime) &= \Mop_i B_j(\br,\brprime) \, .
	\end{align*}
    
	In this paper, through two representative examples, we show how to construct the kernels $V(\br,\brprime)$ and $B(\br,\brprime)$ so that equations \eqref{eq:vie}-\eqref{eq:bie} form a system of Fredholm integral equations of the second kind. In general, a Fredholm integral equation of the second kind is an integral equation of the form:
	\begin{equation*}
		(I + K) u = f
	\end{equation*}
	for $u, f \in X$, where $X$ is a Banach or Hilbert space, $I$ is the identity operator, and $K: X \to X$ is a compact integral operator acting on $X$. Such equations have a number of numerical and analytical advantages, the most notable being that the condition number of the corresponding discrete linear system remains bounded under suitable refinement of the discretization \cite{kress1999linear}. 
	
	The solution $\phi$ to the boundary value problem \eqref{eq:bvps} can be retrieved from the ansatz \eqref{eq:ansatz} through the following formula:
	\begin{multline}
		\phi(\brthree) = \int_{\mathbb{R}^2} \frac{1}{4\pi |\brthree-\brthree'| }  \int_\Omega V(\brprime,\brprimeprime) \mu(\brprimeprime) \, \dd A(\brprimeprime) \, \dd A(\brprime) \\
		 + \int_{\mathbb{R}^2} \frac{1}{4\pi |\brthree-\brthree'| } \int_{\partial \Omega} B (\brprime,\brprimeprime) \cdot \eta (\brprimeprime) \, \dd s(\brprimeprime) \, \dd A(\brprime) \, , \label{eq:phiformula}
	\end{multline}
	where $\brthree' = [x',y',0]^T$ and $\brprime = [x',y']^T$. Later, we show that the integrals above can be interchanged and that simple analytical formulas are available for $\Sthreed [ V( \, \cdot \, ,\brprimeprime)] (\br) $ and $\Sthreed [ B( \, \cdot \, ,\brprimeprime)] (\br)$ when $\br'',\br\in D$. Using these formulas, $\phi$ can be computed on surface by evaluating integrals over finite regions. 
	
	\subsection{Notation and assumptions}
	
	In the remainder of the paper, we assume that $\Omega$ is a bounded, open domain in $\bbR^2$, whose boundary, $\partial \Omega$, 
	is a smooth, regular curve. The in-plane outward-pointing normal, positively-oriented tangent, and 
	signed curvature at a point $\br \in \partial \Omega$ are denoted by $\bn(\br)$, $\btau(\br)$, and $\kappa(\br)$, 
	respectively, and the dependence on $\br$ is dropped when it is clear from context. For a function of two variables, 
    $K(\br,\br')$, normal derivatives are denoted by 
    $$ \partial_{\bn} K(\br,\br') = \bn(\br)\cdot \nabla_{{\bf w}} K({\bf w},\br') |_{{\bf w}=\br} \quad \textrm{ and } \quad
    \partial_{\bn'} K(\br,\br') = \bn(\br') \cdot \nabla_{\bf w} K(\br,{\bf w}) |_{{\bf w}=\br'} \; ,$$
	where $\br,\br' \in \partial \Omega$, as appropriate. A similar notation is used for tangential derivatives. Regularity results below are described using the standard Sobolev spaces, denoted $H^s$. By $H^s_{\rm loc}(\bbR^2)$, we 
	mean the space of functions $f$ such that $\varphi f \in H^s(\bbR^2)$ for any compactly-supported, 
	smooth $\varphi$.
	
	For several quantities in the analysis, the restriction of a function
	to the boundary $\partial \Omega$ must be understood in terms of boundary traces.
	Let $\tilde{\Omega}\subset D$ be an open domain that contains $\overline{\Omega}$.
	For a function $f$ defined on $\tilde{\Omega}$, $\gamma_0^-f$ denotes the trace of $f \upharpoonright_\Omega$ on $\partial \Omega$, 
	which is well-defined provided the restriction $f \upharpoonright_\Omega$ has sufficient regularity. 
	Likewise, $\gamma_0^+f$ denotes the trace of $f\upharpoonright_{\tilde{\Omega}\setminus 
		\overline{\Omega}}$ on $\partial \Omega$. The ``jump'' in $f$ across the boundary is denoted
	by $[[f]]:= \gamma_0^+f-\gamma_0^{-}f$. The notations $\gamma_j^+ f$ and $\gamma_j^- f$ 
	refer to the exterior and interior traces of the $j$th normal derivative of $f$, respectively.
        In many cases a layer potential with an $L^2$ density has well-defined 
        interior and exterior limits from the normal direction to the boundary (in the $L^2$ sense), even though the 
        restriction of the layer potential to $\Omega$ or $\tilde{\Omega}\setminus \overline{\Omega}$ 
        might not have sufficient regularity
        for the trace theorem. For ease of exposition, and with a slight abuse of notation, we will still use the trace notation in these instances.
	
	\section{Green's functions and regularity of integral operators} 
	\label{sec:green}
	Let the operator $\mathcal{L}$ be in the form of \eqref{eq:Lsigma} and let 
	$d_p$ be the degree of $p$. The fundamental solution $\GS$ satisfying the equation
	\begin{equation}\label{eq:f_gf}
		\mathcal{L}[\GS(\ \cdot \ ,\br')](\br) = \delta(\br-\br') \; ,
	\end{equation}
	together with suitable radiation conditions at infinity, can be explicitly constructed using standard Fourier methods. In this section, we give a brief sketch of this derivation, along with several analytic properties of $\GS$ and integral
    operators derived from $\GS$, which will be used in later sections. 
	
	We begin by noting that the translational invariance of $\mathcal{L}$ immediately implies that $\GS(\br,\br') = \GS(\br-\br')$ and so, without loss of generality we may assume that $\br' = 0.$ Taking a Fourier transform of \eqref{eq:f_gf} yields
	\begin{align}\label{eqn:poly}
		\left[ \frac{1}{2} p(\xi^2) -\frac{1}{2|\xi|}\right] \tilde{G}(\xi) = 1 \; ,
	\end{align}
	where $\tilde{G}$ is the Fourier transform of $\GS(\br).$ The bracketed quantity in the above equation can be written in the form $P(|\xi|) |\xi|^{-1}$ where $P(\xi)=(\xi p(\xi^2) - 1)/2$ is a polynomial of degree $d_P =2d_p+1.$ Let $\rho_1,
    \cdots,\rho_{d_P}$ denote the roots of $P.$ Then, performing a partial fraction decomposition, we have
	\begin{align*}
		\tilde{G}(\xi) = |\xi| \sum_{j=1}^{d_P} \frac{c_j}{|\xi|-\rho_j}, \quad \xi \in \mathbb{R}^2,\quad |\xi| \neq 0,\rho_1,\cdots,\rho_{d_P} \; ,
	\end{align*}
	where $c_j = \lim_{z\to \rho_j} (z-\rho_j)/P(z).$ Here we assume for simplicity that all of the roots of $P$ are simple. Similar results can be obtained for roots of higher multiplicity. 
	
	As a consequence of Cauchy's theorem, the coefficients $c_j$ satisfy the moment conditions: $\sum_{j=1}^{d_P} c_j \rho_j^\ell =0$ for $\ell = 0,\cdots,d_P-2$ and $\sum_{j=1}^{d_P} c_j \rho_j^{d_P-1} = \frac{1}{a_{d_P}}$, where $a_{d_P}$ is the leading order coefficient of $P(z)$. Moreover, since the coefficient of $z^{d_P-1}$ in $P(z)$ is zero, the above moment conditions together with the identities $P(\rho_j) =0,$ $j=1,\cdots,d_P$ imply that the $d_P$th moment also vanishes. 
	
	In particular, it follows that for $d_P \ge 3$
	$$\tilde{G}(\xi) =  \sum_{j=1}^{d_P} \frac{c_j\rho_j}{|\xi|-\rho_j},$$
	again excluding the set $|\xi| = \rho_1,\cdots,\rho_{d_P}.$ 
    For $d_P\le 2$ an extra constant term is present in the above expression leading to a delta function in the final expression for $\GS.$  
	
	For $\rho_j$ not on the non-negative real axis, we may use the identity \cite{askham2025integral}
	\begin{align}
		\mathcal{F}^{-1}[1/(|\xi|-\rho_j)](\br) = \frac{1}{2\pi |\br|}+\frac{\rho_j}{4}{\rm \bf K}_0(-\rho_j |\br|) \; , \label{eq:struveid}
	\end{align}
	where $\mathcal{F}^{-1}$ is the inverse Fourier transform and ${\rm \bf K}_0$ is the Struve function \cite{NIST:DLMF}. For $\rho_j \in \mathbb{R}^+,$ the limit 
	approaching $\rho_j$ from
	the upper half of the complex plane is
	\begin{equation*}
		 \lim_{\epsilon \to 0^+} \mathcal{F}^{-1}[1/(|\xi|-\rho_j - i \epsilon)](\br) = \frac{1}{2\pi |\br|} -\frac{\rho_j}{4}{\rm \bf K}_0(\rho_j |\br|)+\frac{i \rho_j}{2} H_0^{(1)}(\rho_j |\br|) \, ,
	\end{equation*}
	giving an {\it outgoing} contribution to the fundamental solution, where $H_0^{(1)}$ denotes the zeroth order 
	Hankel function of the first kind. 
	
	\begin{remark}
		In most applications there is at most one positive root and the outgoing solution can be 
		obtained by applying a limiting absorption principle. For multiple positive roots one would expect a limiting absorption principle to also apply, though the choice of branch cuts may depend on the problem setting.
	\end{remark}
	
	Using these formulas, as well as the moment conditions, we have that for $d_P \ge 3,$
	\begin{align}\label{eqn:general_G}
		\GS(\br) = \frac{1}{4}\sum_{\rho_j \notin\mathbb{R}^+}c_j \rho_j^2{\rm \bf K}_0(-\rho_j|\br|) + \sum_{\rho_j \in \mathbb{R}^+} c_j \rho_j^2\left[ -\frac{1}{4}{\rm \bf K}_0(\rho_j |\br|) + \frac{i}{2} H_0^{(1)}(\rho_j|\br|)\right]
	\end{align}
	is a solution of \cref{eq:f_gf}.
	For $d_P <3,$ additional terms involving delta functions and multiples of $1/|\br|$ will also appear.
	
	Similarly, if we define
	\begin{equation}
		\label{eq:Gphiid}
		\Gphi (\br,\br') :=  \pv \int_{\mathbb{R}^2}
		\frac{1}{ 4\pi \left |\br-\br'' \right|} \GS(\br'',\br') \,
		\dd A(\br'') \; ,
	\end{equation}
	then $\Gphi$ is also translationally invariant so that $\Gphi(\br,\br') = \Gphi(\br-\br')$. Moreover,
	\begin{align}
    \label{eqn:general_Gphi}
		\Gphi(\br) = \frac{1}{8}\sum_{\rho_j \notin\mathbb{R}^+}c_j \rho_j{\bf K}_0(-\rho_j|\br|) +  \sum_{\rho_j \in \mathbb{R}^+} \frac{c_j}{2} \rho_j\left[ -\frac{1}{4}{\bf K}_0(\rho_j |\br|) + \frac{i}{2}  H_0^{(1)}(\rho_j|\br|)\right]. 
	\end{align}
	
	Note that for $p$ linear the corresponding Green's functions have appeared in previous studies on capillary surfers~\cite{de2018capillary,oza2023theoretical} and surface active substances \cite{chou1994surface}. For quadratic $p$ the Green's functions have appeared in \cite{fox1999green,askham2025integral}.
	
	In many problems of physical interest, there is only one propagating frequency in the Green's function.
	Furthermore, if there is dissipation added to the surface, then there are no slowly decaying modes.
	This is summarized in the following proposition.
	\begin{proposition} \label{prop:sommerfeld}
		Suppose that $p(z) = a_{d_p} z^{d_p} + \cdots + a_1 z + a_0$ with $d_p\geq 1$. 
        If $a_1,\ldots,a_{d_p-1} \geq 0$, $a_{d_p}>0$, and $a_0$ is real, then $P(z) = (zp(z^2)-1)/2$ has 
		precisely one positive real root, which we
		take to be $\rho_1$. In this case, $\GS(\br)$ and $\Gphi(\br)$ radiate like the Hankel function 
		$H^{(1)}_0(\rho_1 |\br|)$, i.e. 
		\begin{equation} \frac{\br}{|\br|} \cdot \nabla_\br\GS(\br) - i \rho_1\GS(\br) = o \left (\frac{1}{\sqrt{|\br|}} \right ) 
			\quad \textrm{and} \quad \frac{\br}{|\br|} \cdot \nabla_\br\Gphi(\br) - i \rho_1\Gphi(\br) = o \left (\frac{1}{\sqrt{|\br|}} \right ) \; ,     
			\label{eq:sommerfeld}
		\end{equation}
		and likewise for their derivatives, as $|\br|\to\infty \; .$
		
		In the case that the coefficients $a_{1},\ldots,a_{d_p}$ are real and 
		$a_0$ has non-zero imaginary part, none of the roots of $P$ are real. Then,
		\begin{equation}
			\GS(\br) = O \left (\frac{1}{|\br|^3} \right )  \quad \textrm{ and } \quad
			\Gphi(\br) = O \left (\frac{1}{|\br|^3} \right ) \; , \label{eq:decaydissipative}
		\end{equation}
		and likewise for their derivatives. 
		More generally, these decay rates hold whenever none of the roots of $P$ are
		real and positive.
	\end{proposition}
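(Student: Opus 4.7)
The plan is to dispatch the three claims in sequence: uniqueness of a positive real root of $P$, the Sommerfeld behavior for $\GS$ and $\Gphi$, and algebraic decay in the dissipative case. All three reduce to combining the explicit representations \eqref{eqn:general_G} and \eqref{eqn:general_Gphi} with large-argument expansions of the Struve and Hankel functions together with the moment conditions on the coefficients $c_j$ recorded earlier.

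For the root count, I would apply Descartes' rule of signs to $2P(z) = a_{d_p} z^{2d_p+1} + a_{d_p-1}z^{2d_p-1} + \cdots + a_1 z^3 + a_0 z - 1$. Under the hypotheses, the nonzero-coefficient sequence has the form $+,+,\ldots,+,a_0,-1$ (with zero coefficients skipped), and a short case check on the sign of $a_0$ shows exactly one sign change in every case, so there is at most one positive real root. Since $P(0)=-1/2<0$ and $P(z)\to+\infty$ as $z\to+\infty$, there is at least one, giving the unique positive root $\rho_1$.

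For the Sommerfeld statement, I would split $\GS$ according to \eqref{eqn:general_G} into the single outgoing Hankel term $\tfrac{i c_1\rho_1^2}{2}H_0^{(1)}(\rho_1|\br|)$, which satisfies \eqref{eq:sommerfeld} directly via the classical large-argument expansion of $H_0^{(1)}$, and a sum of Struve contributions. Using the algebraic expansion $\mathbf{K}_0(z) = -\tfrac{2}{\pi z} + O(|z|^{-3})$ valid on $|\arg z|<\pi$, each Struve term for $\rho_j\notin\bbR^+$ gives
\begin{equation*}
 \frac{c_j \rho_j^2}{4}\mathbf{K}_0(-\rho_j|\br|) = \frac{c_j\rho_j}{2\pi|\br|} + O(|\br|^{-3}),
\end{equation*}
and $-\tfrac{c_1\rho_1^2}{4}\mathbf{K}_0(\rho_1|\br|)$ yields a matching $\tfrac{c_1\rho_1}{2\pi|\br|}+O(|\br|^{-3})$ with the same sign convention. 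The moment identity $\sum_j c_j\rho_j=0$, which is available since $d_P=2d_p+1\geq 3$ makes $\ell=1$ an admissible index in $0\leq \ell\leq d_P-2$, collapses all $1/|\br|$ contributions, leaving an algebraic remainder of order $|\br|^{-3}=o(|\br|^{-1/2})$. Hence \eqref{eq:sommerfeld} for $\GS$ reduces to the classical radiation condition for $H_0^{(1)}$. The analogous calculation applied to \eqref{eqn:general_Gphi}, invoking the companion moment identity $\sum_j c_j=0$, handles $\Gphi$, and differentiating the asymptotic expansions term-by-term treats the derivative statements.

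For the dissipative case I would observe that $\opnm{Im} P(t) = \tfrac{1}{2}\opnm{Im}(a_0)\,t$ for $t\in\bbR$, which vanishes only at $t=0$; since $P(0)=-1/2\neq 0$, no real, hence no positive real, root exists. The second sum in \eqref{eqn:general_G} is therefore empty and only the Struve block remains; the preceding Struve-asymptotic-plus-moment argument then yields $\GS(\br)=O(|\br|^{-3})$ and analogously $\Gphi(\br)=O(|\br|^{-3})$, with the derivative statements following by the same term-by-term differentiation. The more general final assertion covers any $P$ with no positive real root and is obtained by the identical estimate. The main technical obstacle is justifying that the large-argument expansion of $\mathbf{K}_0(-\rho_j|\br|)$ is uniformly valid and differentiable term-by-term for each fixed complex root $\rho_j$ off the positive real axis; since the root set is finite and each nonpositive root keeps $-\rho_j|\br|$ a fixed angular distance from the branch cut for all sufficiently large $|\br|$, this is a standard property of Struve functions but must be invoked with care so that the pointwise bounds combine to the claimed decay.
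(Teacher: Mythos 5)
Your proposal is correct and takes essentially the same route as the paper's (much terser) proof: elementary root-counting for $P$, followed by the large-argument Struve and Hankel asymptotics combined with the moment conditions $\sum_j c_j\rho_j=0$ and $\sum_j c_j=0$ applied to \cref{eqn:general_G} and \cref{eqn:general_Gphi}. One harmless slip: the leading term of the Struve expansion is $\mathbf{K}_0(z)\sim +\tfrac{2}{\pi z}$, not $-\tfrac{2}{\pi z}$, but since you apply the same sign to every term, the $1/|\br|$ cancellation and all stated conclusions are unaffected.
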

    \begin{proof}
         The facts about the roots follow from elementary properties of 
	polynomials. The decay conditions follow by applying the large $|\br|$ asymptotics of Struve and Hankel functions~\cite{NIST:DLMF} and the moment conditions to \cref{eqn:general_G} and 
    \cref{eqn:general_Gphi}.    
    \end{proof}
	
	The slow decay of the Green's functions in the real coefficient case adds additional technical difficulties
	to the analysis. For ease of exposition, in several of our results we restrict our attention to the \emph{dissipative regime} defined below.
	
	\begin{definition}[Dissipative regime]
		The coefficients in the polynomial $p$ are in the {\em dissipative regime}
		if $P$ has no real, positive roots.
	\end{definition}
	
	The small $|\br|$ asymptotics of the Green's functions can be obtained from Fourier analysis and \cref{eqn:poly}. 
	Details of the formulas can be deduced from the small $|\br|$ asymptotics of Struve and Hankel functions and the moment 
	conditions. We provide these in the following lemma; its proof is straightforward and so we omit it.
	\begin{lemma} \label{lem:smallr}
		Suppose $d_P \ge 3.$ Then $\GS$ admits a small $\br$ expansion of the form
		$$\GS(\br) = A_{\rm S}(|\br|^2) + |\br|^{d_P} B_{\rm S}(|\br|^2) +  |\br|^{d_P-3}\log|\br| C_{\rm S}(|\br|^2) \; $$
		and $\Gphi$ has an expansion of the form
		$$\Gphi(\br) = A_\phi(|\br|^2) + |\br|^{d_P-2} B_\phi(|\br|^2) +  |\br|^{d_P+1}\log|\br| C_\phi(|\br|^2) \; ,$$
		where the $A_{\rm S/\phi}$, $B_{\rm S/\phi}$, and $C_{\rm S/\phi}$ functions are infinitely differentiable. 
		
		In particular, when $p(z) = a_1 z + a_0 $, 
		$\frac{a_1}{2} \GS + G_{\rm L} \in H^2_{\rm loc}(\mathbb{R}^2),$ where $G_{\rm L}(\br) = -\frac{1}{2\pi} \log | \br|$ is the standard Laplace Green's function. Similarly, for $p(z) = a_2 z^2+ a_1 z+ a_0$, $\frac{a_2}{2} \GS- G_{\rm B} \in H^4_{\rm loc}(\mathbb{R}^2),$ where $G_{\rm B}(\br) = \frac{1}{8\pi} |\br|^2 \log|\br|$ is the standard biharmonic Green's function.
	\end{lemma}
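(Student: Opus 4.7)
The plan is to substitute the small-argument expansions of the Struve and Hankel functions directly into the explicit formulas \cref{eqn:general_G} and \cref{eqn:general_Gphi}, and then to invoke the moment conditions on the $c_j$'s to cancel those terms whose presence would violate the stated structure. The key inputs (see \cite{NIST:DLMF}) are
\begin{equation*}
\mathbf{K}_0(z) = \mathbf{H}_0(z) + \tilde F(z^2) - \tfrac{2}{\pi} J_0(z) \log z, \qquad H_0^{(1)}(z) = J_0(z) + \tfrac{2i}{\pi} J_0(z) \log z + i\hat F(z^2),
\end{equation*}
where $\mathbf{H}_0$ is an entire odd function starting at $z^1$, and $\tilde F$, $\hat F$, and $J_0$ are entire in $z^2$.

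Substituting into \cref{eqn:general_G}, splitting $\log(\pm\rho_j|\br|) = \log(\pm\rho_j) + \log|\br|$, and using $\mathbf{H}_0(-z) = -\mathbf{H}_0(z)$ together with $J_0$ being even, the contributions from $\rho_j \in \mathbb{R}^+$ and $\rho_j \notin \mathbb{R}^+$ align and combine into three pieces. The $\log|\br|$ part collapses to
\begin{equation*}
-\tfrac{1}{2\pi}\log|\br|\sum_j c_j \rho_j^2 J_0(\rho_j|\br|) = -\tfrac{1}{2\pi}\log|\br|\sum_k \beta_k |\br|^{2k}\sum_j c_j \rho_j^{2k+2},
\end{equation*}
whose inner sums vanish for every $\ell = 2k+2 \le d_P-2$ by the moment conditions, so the first surviving term occurs at $2k = d_P-3$, giving $|\br|^{d_P-3}\log|\br|\,C_{\rm S}(|\br|^2)$. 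The Struve contributions combine into $-\tfrac{1}{4}\sum_k \alpha_k |\br|^{2k+1}\sum_j c_j \rho_j^{2k+3}$; since $d_P = 2d_p+1$ is odd, every odd moment with $\ell \le d_P$ vanishes (the sole low nonzero moment is $\sum_j c_j \rho_j^{d_P-1} = 1/a_{d_P}$, and $d_P-1$ is even), so the odd powers cancel through $|\br|^{d_P-2}$, leaving $|\br|^{d_P}B_{\rm S}(|\br|^2)$. The remaining pieces from $\tilde F$, $\hat F$, $J_0$, and the constants $\log(\pm\rho_j)$ are analytic in $|\br|^2$ and form $A_{\rm S}(|\br|^2)$.

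The expansion of $\Gphi$ proceeds identically on \cref{eqn:general_Gphi}, except that the prefactors carry $\rho_j$ rather than $\rho_j^2$; this shifts each moment sum down by one power of $\rho_j$, so the log sums involve $\sum_j c_j \rho_j^{2k+1}$ (vanishing for all odd $\ell \le d_P$, which pushes the leading log term out to $|\br|^{d_P+1}\log|\br|$) and the odd-power sums involve $\sum_j c_j \rho_j^{2k+2}$, whose first nonvanishing value is the $(d_P-1)$th moment $1/a_{d_P}$, producing the leading behavior $|\br|^{d_P-2}$. For the two special cases, the leading singularity of $\GS$ is fixed by the large-$|\xi|$ asymptotic $\widehat{\GS}(\xi) \sim 1/(a_{d_P}|\xi|^{d_P-1})$: for $d_P = 3$ this matches a scalar multiple of $G_{\rm L}$, and for $d_P = 5$ a scalar multiple of $G_{\rm B}$. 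After subtracting the matching multiple, the expansion just derived reduces the difference to a sum of the form $A(|\br|^2) + |\br|^{d_P}B(|\br|^2) + |\br|^{d_P-1}\log|\br|\,\widetilde C(|\br|^2)$ (the leading log coefficient having been removed), and each of these pieces is directly verified to lie in $H^2_{\rm loc}$ or $H^4_{\rm loc}$, respectively. The main bookkeeping point is the combination of the real-positive and remaining root contributions into single moment sums, which is routine once one uses the oddness of $\mathbf{H}_0$ and the evenness of $J_0$ to align the two groups.
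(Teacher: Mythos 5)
Your route is exactly the one the paper has in mind (and omits as ``straightforward''): insert the small-argument expansions of $\mathbf{K}_0$ and $H_0^{(1)}$ into \cref{eqn:general_G} and \cref{eqn:general_Gphi}, merge the two groups of roots using the oddness of $\mathbf{H}_0$ and evenness of $J_0$, and let the moment conditions (including the vanishing of the $d_P$th moment and the fact that $d_P$ is odd) annihilate the low-order $\log|\br|$ and odd-power terms. Your bookkeeping of which moment controls which term is correct and does reproduce the exponents $d_P-3$, $d_P$ for $\GS$ and $d_P-2$, $d_P+1$ for $\Gphi$.

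The weak point is the two special cases, which you treat too quickly. Your own expansion pins down the leading coefficient: for $d_P=3$ the logarithmic part of $\GS$ is $-\tfrac{1}{2\pi}\log|\br|\sum_j c_j\rho_j^{2}+O(|\br|^2\log|\br|)$ with $\sum_j c_j\rho_j^{2}=1/a_{d_P}=2/a_1$, i.e.\ the singular part of $\GS$ is $\tfrac{2}{a_1}\GL$ with $\GL=-\tfrac{1}{2\pi}\log|\br|$. So ``subtracting the matching multiple,'' as you say, establishes that $\tfrac{a_1}{2}\GS-\GL\in H^2_{\rm loc}$, which is the analogue of the biharmonic conclusion $\tfrac{a_2}{2}\GS-\GB\in H^4_{\rm loc}$ (which your argument does give correctly), but it is not literally the ``$+$'' combination in the statement; as written, your proposal never confronts this sign, and carrying your computation to the end contradicts the stated Laplace case (the discrepancy appears to be a sign slip in the statement, echoed in the prose and jump signs around \Cref{lem:capjumps}, rather than a flaw in your method). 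You should carry the leading coefficient explicitly and state the signed conclusion you actually obtain, and also spell out the (easy) final step you only assert: the leftover pieces $|\br|^{d_P}B_{\rm S}(|\br|^2)$ and $|\br|^{d_P-1}\log|\br|\,\widetilde C(|\br|^2)$ have second (resp.\ fourth) derivatives that are $O(\log|\br|)$, hence locally square integrable, which is what yields the $H^2_{\rm loc}$ and $H^4_{\rm loc}$ memberships.
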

	
	Consider the volume integral operators 
	$$  \mathcal{V}_{\rm S}[\mu](\br) := \int_{\Omega} \GS(\br,\br') \mu(\br') \, {\rm d}A(\br')\quad \textrm{and} 
	\quad \mathcal{V}_{\rm \phi}[\mu](\br) := \int_{\Omega} \Gphi(\br,\br') \mu(\br') \, {\rm d}A(\br') \; .$$
        We summarize some smoothing properties of these operators in the following lemma. Because the Green's functions
        are not PDE Green's functions, higher regularity is described for compactly contained subsets. The
        arguments for these properties are standard and omitted. 
	\begin{lemma}
		\label{lem:genregvolume}
		Suppose that $d_P\geq 3$, let $A \Subset \Omega$ be an open set compactly contained in $\Omega$, and let
        $m \in \mathbb{N}_0$ be given. Then, $\VS:L^2(\Omega)\to H^{d_P-1}_{\rm loc}(\bbR^2)$, $\Vphi:L^2(\Omega)\to H^{d_P}_{\rm loc}(\bbR^2)$,
		$\VS:H^m(\Omega)\to H^{m+d_P-1}(A)$, and $\Vphi:H^m(\Omega)\to H^{m+d_P}(A)$. 
        The ``loc'' may be dropped in the dissipative regime. 
	\end{lemma}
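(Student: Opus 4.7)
The plan is to derive all four smoothing estimates from the identity
\[
\mathcal{L}[\VS[\mu]] = \mu\chi_\Omega,
\]
which holds distributionally on all of $\bbR^2$ (by extending $\mu$ by zero outside $\Omega$ and using $\mathcal{L}[\GS(\,\cdot\,,\brprime)] = \delta(\,\cdot\,-\brprime)$ from \cref{eq:f_gf}), together with interior elliptic regularity for $\mathcal{L}$. By \cref{eq:Lsigma}, $\mathcal{L}$ is the sum of the differential operator $\tfrac{1}{2}p(-\Delta)$ of order $2d_p = d_P - 1$, with principal symbol $\tfrac{1}{2}a_{d_p}|\xi|^{2d_p}$, and the convolution operator $-\Shat$ with kernel $-1/(4\pi|\br|)$, whose Fourier symbol $-1/(2|\xi|)$ makes it smoothing of order $-1$. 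Since $d_P \geq 3$, the leading part dominates and $\mathcal{L}$ is elliptic of order $d_P - 1$.

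Given ellipticity, standard interior elliptic regularity applied to $\mathcal{L}$ yields $\VS[\mu] \in H^{d_P-1}_{\rm loc}(\bbR^2)$ directly from $\mu\chi_\Omega \in L^2(\bbR^2)$. For the sharper statement with $A \Subset \Omega$, I would insert an intermediate set $A \Subset A' \Subset \Omega$, use $\mathcal{L}[\VS[\mu]]|_{A'} = \mu|_{A'} \in H^m(A')$, and invoke interior regularity on the scale $A \Subset A'$ to conclude $\VS[\mu] \in H^{m+d_P-1}(A)$. For the $\Vphi$ estimates I would exploit the factorization $\Vphi[\mu] = \Shat[\VS[\mu]]$ obtained from \cref{eq:Gphiid} by Fubini (justified by the local integrability of $1/|\br|$ together with the $L^2_{\rm loc}$ bounds just established for $\VS[\mu]$), and then observe that $\Shat$ locally gains exactly one derivative in Sobolev scale, lifting each $\VS$ bound to a $\Vphi$ bound with the target space raised by one degree.

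The main obstacle is the behavior of the symbol $\tilde{G}$ at the propagation radii $|\xi| = \rho_j \in \bbR^+$, which are non-integrable singularities of $\tilde{G}$ that prevent a direct Fourier multiplier argument on all of $\bbR^2$ and force the ``loc'' qualifier in the general case. In the dissipative regime, every $\rho_j$ lies off the real axis, so $\tilde{G}$ is smooth on $\bbR^2$, vanishes at the origin by the moment condition $\sum_j c_j = 0$ from \Cref{sec:green}, and satisfies the uniform bound $|\tilde{G}(\xi)| \lesssim (1+|\xi|)^{-(d_P-1)}$; the analogous bound $|\widetilde{\Gphi}(\xi)| \lesssim (1+|\xi|)^{-d_P}$ follows from $\widetilde{\Gphi}(\xi) = \tilde{G}(\xi)/(2|\xi|)$ together with $\tilde{G}(0)=0$. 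These uniform bounds promote $\VS$ and $\Vphi$ to bounded Fourier multipliers of the claimed orders on all of $\bbR^2$, which is what allows the ``loc'' to be dropped.
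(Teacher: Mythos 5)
Your plan has a sound core, and the dissipative half is essentially right: the moment conditions do give $|\tilde{G}(\xi)|\lesssim(1+|\xi|)^{1-d_P}$, and since $\tilde{G}(\xi)=|\xi|\sum_j c_j/(|\xi|-\rho_j)$ vanishes at the origin, the symbol of $\Gphi$ is bounded by $(1+|\xi|)^{-d_P}$; in the dissipative regime these bounds are uniform on $\bbR^2$ and yield the global statements $\VS:L^2(\Omega)\to H^{d_P-1}(\bbR^2)$ and $\Vphi:L^2(\Omega)\to H^{d_P}(\bbR^2)$. This is consistent with the ``standard'' argument the paper omits, which instead localizes the density with a cutoff equal to one on a neighborhood of $\overline{A}$ and supported in $\Omega$, applies the Fourier bounds to the near part, and uses smoothness of $\GS$, $\Gphi$ away from the diagonal for the far part (which is also how the interior $H^{m+d_P-1}(A)$ and $H^{m+d_P}(A)$ gains are obtained without your elliptic-regularity detour).

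The genuine gap is in the non-dissipative case, i.e.\ precisely the ``loc'' statements your composition-plus-elliptic-regularity route is meant to cover. Your Fubini justification for $\Vphi[\mu]=\Shat[\VS[\mu]]$ (and, implicitly, for the identity $\Lop[\VS[\mu]]=\mu\chi_\Omega$) does not work as stated: local integrability of $1/|\br|$ and $L^2_{\rm loc}$ control of $\VS[\mu]$ say nothing about the integral over the unbounded part of $\bbR^2$. What is needed is decay of $\GS$ at infinity, which is $O(|\br|^{-3})$ only in the dissipative regime (\Cref{prop:sommerfeld}); the paper's corresponding interchange result, \Cref{lem:sgsgphi}, is proved only there. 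When $P$ has a positive real root, $\GS$ decays like $|\br|^{-1/2}$ with oscillation, the relevant integrals converge only in a principal value sense, and neither Fubini--Tonelli nor an off-the-shelf ``interior elliptic regularity for $\Lop$'' applies: $\Lop$ is nonlocal, so to invoke elliptic theory you must move $\Shat[\VS[\mu]]$ to the right-hand side and control it locally, which again requires handling the slowly decaying oscillatory tail of $\VS[\mu]$ --- exactly the estimate your sketch assumes. A repair that covers both regimes is to argue on the Fourier side without the composition identity: $\widehat{\VS[\mu]}=\tilde{G}\,\widehat{\mu\chi_\Omega}$ with $\widehat{\mu\chi_\Omega}$ bounded and smooth (compactly supported $L^2$ density), split off a frequency cutoff around the circles $|\xi|=\rho_j$ --- that piece has compactly supported Fourier transform and is therefore $C^\infty$, hence locally in every $H^s$ --- and use the $(1+|\xi|)^{1-d_P}$ bound on the remainder; alternatively, use the small-$|\br|$ expansion of \Cref{lem:smallr} (singular part comparable to a polyharmonic kernel of order $d_P-1$, plus smoother remainders) together with the near/far cutoff of the density, as in the paper's treatment of the boundary operators in \Cref{lem:genregbdry}.
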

\noindent The regularity of the corresponding boundary-to-volume operators can also be established.
    \begin{lemma}
    \label{lem:genregbdry}
        		Let 
		$$ K_{\rm S}(\br,\br') = \partial_{\bn'}^{\ell} \partial_{\btau'}^m 
		\GS(\br,\br') \quad \textrm{and} \quad   K_{\phi}(\br,\br') = \partial_{\bn'}^{\ell} \partial_{\btau'}^m 
		\Gphi(\br,\br') \; , $$ 
		where $\ell+m \leq d_P-2$. Let $\KS$ and $\Kphi$ be the corresponding integral operators
		$$ \KS[\eta](\br) = \int_{\partial \Omega} K_{\rm S}(\br,\br') \eta(\br') \, \dd s(\br') \quad \textrm{and} \quad
		\Kphi[\eta](\br) = \int_{\partial \Omega} K_{\phi}(\br,\br') \eta(\br') \, \dd s(\br') \; .$$
		Then,
		$\KS:H^s(\partial \Omega)\to H^{s+d_P-3/2-\ell-m}(\Omega)$ for $0\leq s \leq 3/2$ and 
        $\Kphi: L^2(\partial \Omega)\to H^{d_P-1-\ell-m}(\Omega)$.
    \end{lemma}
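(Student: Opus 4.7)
The plan is to combine the small-$\br$ expansions of $\GS$ and $\Gphi$ from \Cref{lem:smallr} with classical Sobolev mapping properties of layer potentials with weakly singular kernels on smooth curves.

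First, I would apply \Cref{lem:smallr} to decompose the kernel $\partial_{\bn'}^{\ell}\partial_{\btau'}^m \GS(\br,\br')$ into an analytic part (which remains $C^\infty$ after differentiation), an intermediate piece arising from $|\br-\br'|^{d_P}B_{\rm S}(|\br-\br'|^2)$, and the principal singular piece arising from $|\br-\br'|^{d_P-3}\log|\br-\br'|\,C_{\rm S}(|\br-\br'|^2)$. After applying $\ell+m$ derivatives in $\br'$, the principal piece takes the form $|\br-\br'|^{d_P-3-\ell-m}\log|\br-\br'|$ times a bounded angular factor and smooth analytic corrections, while the intermediate piece has exponent $d_P-\ell-m \geq 2$. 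The hypothesis $\ell+m \leq d_P-2$ ensures that the resulting kernel singularities are at worst weakly singular.

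Second, I would invoke the standard mapping properties of weakly singular single layer operators on a smooth curve: a kernel of the form $|\br-\br'|^\alpha\log|\br-\br'|$ with $\alpha > -1$ defines an operator bounded $H^s(\partial \Omega)\to H^{s+\alpha+3/2}(\Omega)$ on an appropriate range of $s$. This can be established by recognizing the kernel, up to smoother remainders, as a fundamental solution of a polyharmonic operator and invoking elliptic regularity together with the trace theorem, or alternatively by a direct pseudodifferential calculation on the curve. Applying this with $\alpha = d_P-3-\ell-m$ yields the claim for $\KS$ on the stated range $0\leq s \leq 3/2$, where the upper endpoint is inherited from the limiting regularity of the underlying harmonic single layer theory. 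The $\Kphi$ result follows by the same decomposition applied to the expansion of $\Gphi$, additionally tracking the effect of the convolution with the Newton kernel $1/(4\pi|\br|)$ that defines $\Gphi$ from $\GS$.

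The main technical obstacle will be carefully handling the endpoint case $\ell+m=d_P-2$, where the principal kernel after differentiation becomes $|\br-\br'|^{-1}\log|\br-\br'|$: although this is not absolutely integrable along $\partial\Omega$ in the limit $\br\to\partial\Omega$, it corresponds morally to a normal derivative of the logarithmic single layer, and the correct $H^s\to H^{s+1/2}(\Omega)$ bound is recovered by a double-layer-type argument, consistent with the formula. A secondary subtlety is confirming that the intermediate piece $|\br-\br'|^{d_P-\ell-m}$, which has only finite H\"older regularity when its exponent is odd (as occurs whenever $d_P-\ell-m$ is odd, necessarily the case for many choices given $d_P=2d_p+1$), contributes an operator at least as smoothing as the principal log piece; this is immediate since, after the specified differentiations, the exponent remains $\geq 2$ and the kernel is bounded and at least $C^1$.
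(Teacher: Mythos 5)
Your treatment of $\KS$ is essentially the paper's argument: decompose via \Cref{lem:smallr}, identify the dominant term $|\br-\brprime|^{d_P-3}\log|\br-\brprime|$ as the fundamental solution of an elliptic (polyharmonic) operator of order $d_P-1$, and invoke the pseudodifferential theory of boundary potentials (the paper cites Theorem 8.5.8 of Hsiao--Wendland) to get the gain of $d_P-3/2$, reduced by the $\ell+m$ source derivatives. Two cautions there: your intermediate claim that \emph{any} kernel bounded by $|\br-\brprime|^{\alpha}\log|\br-\brprime|$ with a merely ``bounded angular factor'' maps $H^s(\partial\Omega)\to H^{s+\alpha+3/2}(\Omega)$ is not a standard fact for general $\alpha$ and general anisotropic factors --- size bounds alone give only $L^2$-type estimates (as in \Cref{lem:gencompact}), and the full Sobolev-scale smoothing really does require keeping the structure $\partial_{\bn'}^{\ell}\partial_{\btau'}^{m}$ applied to the polyharmonic kernel so that the Calder\'on-type calculus applies; since you offer exactly that identification as the justification, the argument survives, but the reduction to isotropic power-log kernels should be dropped rather than patched. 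Also, at the endpoint $\ell+m=d_P-2$ the most singular term is homogeneous of degree $-1$ \emph{without} a logarithm (odd-order derivatives of $|\br|^{2k}\log|\br|$ kill the log), not $|\br-\brprime|^{-1}\log|\br-\brprime|$ as you state; this is harmless for the conclusion but your discussion of non-integrability along $\partial\Omega$ is moot because the operator is boundary-to-volume, with targets in $\Omega$.

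The genuine gap is the $\Kphi$ half, which you dispatch in one sentence as ``the same decomposition \ldots tracking the effect of the convolution with the Newton kernel.'' By \Cref{lem:smallr} the dominant local term of $\Gphi$ is $|\br-\brprime|^{d_P-2}$ --- an \emph{odd} power with no logarithm (recall $d_P$ is odd) --- which is not the fundamental solution of any elliptic PDE, so neither the polyharmonic identification nor the pseudodifferential boundary-potential theory that carried the $\KS$ case is available; this is precisely why the lemma asserts a different kind of statement for $\Kphi$ (only from $L^2(\partial\Omega)$, with gain $d_P-1-\ell-m$ rather than a $3/2$-shifted trace-type gain). The paper's proof instead argues directly: after the $\ell+m$ source derivatives, taking a further $d_P-1-\ell-m$ target derivatives of the dominant term produces singularities of the form $1/|\br-\brprime|$, and kernels with such singularities give bounded operators from $L^2(\partial\Omega)$ to $L^2(\Omega)$, whence $\Kphi:L^2(\partial\Omega)\to H^{d_P-1-\ell-m}(\Omega)$. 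Your proposal contains no substitute for this step; ``tracking the convolution with $1/(4\pi|\br|)$'' is not an argument (its local effect is exactly what \Cref{lem:smallr} already encodes), and if one instead applied your claimed general power-log mapping principle to $|\br-\brprime|^{d_P-2}$ one would assert a stronger, unjustified gain. As written, the $\Kphi$ conclusion is unproven.
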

	\begin{proof}
	Recall that $d_P \geq 3$ is odd. For $\GS$, the dominant term
        in the expansion provided by \Cref{lem:smallr} is $|\br|^{d_P-3}\log |\br|$ and up to 
        $d_P$ derivatives of the other terms are bounded. The dominant term is the Green's function 
        of an elliptic PDE of order $d_P-1$; the pseudo-differential theory, e.g. Theorem 8.5.8
        in \cite{hsiao2008boundary}, then implies that the contribution of the dominant term 
        gains $d_P-3/2$ derivatives for boundary-to-volume. For $\Gphi$, the dominant term is 
        $|\br|^{d_P-2}$, which is not a PDE kernel. Observe that taking $d_P-1$ derivatives of the 
        dominant term results in singularities of the form $1/|\br|$, which give bounded operators from
        $L^2(\partial \Omega)$ to $L^2(\Omega)$.
	\end{proof}
    
	Given these regularity results and asymptotics, it is relatively straightforward to derive the 
	compactness and jump properties of the integral operators we define for specific boundary value
	problems in the next section. We will make frequent use of some standard results from Sobolev
	space theory and integral equation theory that we collect below. 
	
	For the volume-to-boundary operators, regularity of the trace can be established by combining
    \Cref{lem:genregvolume} and the trace theorem; see, e.g., \cite[Theorem 4.2.1]{hsiao2008boundary}. The version below 
	is a straightforward consequence of the usual statement.
	\begin{theorem}[Trace theorem]
		We have $\gamma_0^-:H^1(\Omega)\to H^{1/2}(\partial \Omega)$ and 
		$\gamma_0^+:H^1_{\rm loc}(\bbR^2\setminus \overline{\Omega})\to H^{1/2}(\partial \Omega)$.
		Moreover, if $f\in H^1_{\rm loc}(\bbR^2)$, then $\gamma_0^+f = \gamma_0^-f$.
	\end{theorem}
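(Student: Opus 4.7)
The plan is to reduce this statement to the standard trace theorem on a bounded smooth domain, which is precisely the version cited as Theorem 4.2.1 in \cite{hsiao2008boundary}. Since $\Omega$ is itself a bounded domain with smooth boundary, the interior assertion $\gamma_0^-:H^1(\Omega)\to H^{1/2}(\partial \Omega)$ is nothing more than a restatement of the standard result and requires no additional argument.

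For the exterior map, the domain $\bbR^2\setminus\overline{\Omega}$ is unbounded so the standard theorem does not apply verbatim, and the plan is to localize. First I would fix a bounded open set $U\subset \bbR^2$ with smooth boundary such that $\overline{\Omega}\subset U$, so that the collar $U\setminus\overline{\Omega}$ is a bounded smooth domain whose boundary consists of the two disjoint components $\partial \Omega$ and $\partial U$. Given $f\in H^1_{\rm loc}(\bbR^2\setminus\overline{\Omega})$, the restriction $f\upharpoonright_{U\setminus\overline{\Omega}}$ lies in $H^1(U\setminus\overline{\Omega})$, and applying the standard trace theorem to this bounded collar produces a trace in $H^{1/2}(\partial \Omega)$ together with a (discarded) component on $\partial U$. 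Continuity in the $H^1_{\rm loc}$ topology follows from continuity on this single bounded collar, since the trace depends only on $f$ in an arbitrarily thin neighborhood of $\partial \Omega$.

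The final claim, that $\gamma_0^+ f=\gamma_0^- f$ when $f\in H^1_{\rm loc}(\bbR^2)$, is the standard ``no jump'' property for genuinely $H^1$ functions across an interior interface. The plan is to approximate: pick a ball $B$ with $\overline{\Omega}\subset B$ so that $f\upharpoonright_B\in H^1(B)$, and choose a sequence $f_n\in C^\infty(\overline{B})$ converging to $f$ in $H^1(B)$. For each $f_n$ the one-sided traces coincide trivially with the pointwise restriction $f_n\upharpoonright_{\partial \Omega}$, so $\gamma_0^+ f_n=\gamma_0^- f_n$. Passing to the limit in $H^{1/2}(\partial \Omega)$ using the continuity of $\gamma_0^\pm$ established in the first two parts yields $\gamma_0^+ f = \gamma_0^- f$.

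No step is genuinely difficult; the only ``obstacle'' is purely bookkeeping, namely the localization needed to handle the unbounded exterior domain. Once the collar construction is in place, the entire statement reduces to a careful application of the cited trace theorem and a density argument in $H^1(B)$.
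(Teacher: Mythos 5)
Your proof is correct and takes essentially the same route the paper intends: the paper gives no argument at all, simply citing the standard trace theorem (Theorem 4.2.1 of Hsiao--Wendland) and calling the stated version a straightforward consequence. Your collar localization for the exterior trace and the smooth-approximation argument for the no-jump claim are precisely the routine details the paper leaves implicit.
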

	
	\noindent Compactness can then be inferred 
	by applying Rellich's lemma~\cite[Theorem 4.1.6]{hsiao2008boundary}.
	\begin{lemma}[Rellich's lemma]
		Let $s > t$. The embeddings $H^s(\Omega) \hookrightarrow H^t(\Omega)$ 
		and $H^s(\partial \Omega) \hookrightarrow H^t(\partial \Omega)$ are 
		compact.
	\end{lemma}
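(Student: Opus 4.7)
The plan is to prove both compact embeddings via Fourier methods, handling the domain case by a smooth extension and the boundary case by arc-length parametrization (since $\partial \Omega$ is a smooth closed curve in $\mathbb{R}^2$, it is diffeomorphic to a circle).

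I would begin with the embedding $H^s(\Omega)\hookrightarrow H^t(\Omega)$. Since $\partial \Omega$ is smooth, there exists a bounded extension operator $E:H^s(\Omega)\to H^s(\mathbb{R}^2)$ whose range consists of functions supported in a fixed ball $B_R\supset\overline\Omega$. Given a bounded sequence $\{u_n\}\subset H^s(\Omega)$, I would replace it with its extensions $\{Eu_n\}$, which are uniformly bounded in $H^s(\mathbb{R}^2)$ and compactly supported. The key analytical step is the Fourier-side splitting: for any $R_0>0$,
\begin{equation*}
\|Eu_n - Eu_m\|_{H^t(\mathbb{R}^2)}^2 = \int_{|\xi|\leq R_0}(1+|\xi|^2)^t |\widehat{Eu_n}(\xi) - \widehat{Eu_m}(\xi)|^2\,d\xi + \text{(tail)} \, ,
\end{equation*}
where the tail is bounded by $(1+R_0^2)^{t-s}(\|Eu_n\|_{H^s}+\|Eu_m\|_{H^s})^2$, which can be made uniformly small by taking $R_0$ large since $t<s$. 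On the inner region, because $Eu_n$ are supported in $B_R$, their Fourier transforms are uniformly bounded and equicontinuous (this follows from Paley--Wiener type reasoning: differentiating under the integral gives $|\partial_\xi \widehat{Eu_n}|\leq R\|Eu_n\|_{L^2}$, which is uniformly controlled by $\|Eu_n\|_{H^s}$). Arzelà--Ascoli then supplies a subsequence converging uniformly on $\{|\xi|\leq R_0\}$, yielding a Cauchy subsequence in $H^t(\mathbb{R}^2)$, and restriction to $\Omega$ gives the desired compactness.

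For the boundary embedding, I would parametrize $\partial \Omega$ by arc length as a map from $\mathbb{R}/L\mathbb{Z}$, so that $H^s(\partial \Omega)$ is identified with a Sobolev space on the circle. On this compact manifold the Fourier series expansion is available: if $u \sim \sum_k \hat u_k e^{2\pi i k x/L}$, then $\|u\|_{H^s}^2 \asymp \sum_k (1+k^2)^s|\hat u_k|^2$. The compactness argument here is cleaner: given a bounded sequence in $H^s$, a diagonal subsequence gives convergence of each Fourier coefficient, and the same splitting (inner block $|k|\leq N$ handled by pointwise convergence plus uniform $\ell^2$ bound, tail $|k|>N$ handled by $(1+N^2)^{t-s}$ smallness) yields a Cauchy subsequence in $H^t(\partial \Omega)$.

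The only subtle ingredient is the existence of the bounded extension operator $E:H^s(\Omega)\to H^s(\mathbb{R}^2)$ for arbitrary $s \geq 0$, which is where the smoothness of $\partial \Omega$ is essential; this is a standard construction (Stein's extension, or reflection through local straightening charts combined with a smooth partition of unity subordinate to a tubular neighborhood). Given that, the rest of the argument is essentially the classical Rellich--Kondrachov compactness in its Fourier-analytic form, and no step presents a genuine obstacle beyond bookkeeping. For the purposes of the paper, one can simply cite \cite[Theorem 4.1.6]{hsiao2008boundary} as is done.
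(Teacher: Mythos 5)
Your proposal is correct, but it supplies something the paper deliberately does not: the paper offers no proof of this lemma at all, simply quoting it as a standard result from \cite[Theorem 4.1.6]{hsiao2008boundary} (as you note yourself in your final sentence). Your sketch is the classical Rellich--Kondrachov argument in its Fourier-analytic form, and the main steps are sound: the extension to a compactly supported bounded family in $H^s(\bbR^2)$, the high-frequency tail bound by $(1+R_0^2)^{t-s}$ using $t<s$, the Paley--Wiener-type uniform bound and equicontinuity of $\widehat{Eu_n}$ on compact sets followed by Arzel\`a--Ascoli (with a diagonal extraction over an exhaustion $R_0\to\infty$ to get a single subsequence), and the reduction of the boundary case to Fourier series on a circle via arc-length parametrization. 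Two small caveats are worth flagging. First, the extension-operator route as written covers $s\geq 0$; the lemma as stated allows arbitrary real $s>t$, and negative orders would be handled by duality or by working directly with the chart-based definition (the circle argument, by contrast, works verbatim for all real $s>t$). Second, on the inner ball you need the weight $(1+|\xi|^2)^t$ to be integrated against the uniform convergence over a set of finite measure, which is fine since $|\xi|\leq R_0$, but it deserves a sentence. Neither point is a genuine gap for the ranges of $s,t$ actually used in the paper (all between $0$ and $3/2$); your argument buys self-containedness at the cost of length, whereas the paper's citation is the economical choice for a result this standard.
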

	
	As observed above, the $\GS$ kernel is a relatively smooth perturbation of a (poly)-harmonic 
	kernel for $d_P\geq 3$. To establish the compactness of the remainder, we will apply 
	the following general result for integral operators with weakly singular kernels, which
	is adapted from~\cite[Theorem 2.22]{kress1999linear}.
	\begin{lemma}
		\label{lem:gencompact}
		Suppose that $K(\br,\br')$ is continuous except on the diagonal, 
		$\br=\br',$ and that $|K(\br,\br')| \leq M|\br-\br'|^\nu$ for $|\br-\br'|\leq 1$ and 
		some constants $M\geq 0$ and $\nu \in (-1,0]$. Then, the integral operator with kernel $K$ is compact on $L^2(\partial \Omega)$. 
	\end{lemma}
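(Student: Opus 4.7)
The plan is to prove compactness by approximating the operator in operator norm by a sequence of compact operators and then invoking the fact that norm limits of compact operators on a Hilbert space are compact.

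First I would introduce a truncation of the kernel near the diagonal. Let $T$ denote the integral operator with kernel $K$ on $L^2(\partial \Omega)$. For each $n \in \mathbb{N}$ define
\begin{equation*}
K_n(\br,\br') = \begin{cases} K(\br,\br') & \text{if } |\br-\br'| \geq 1/n \, , \\ 0 & \text{otherwise,} \end{cases}
\end{equation*}
and let $T_n$ denote the corresponding integral operator. Since $\partial \Omega$ is compact and $K_n$ is continuous away from the diagonal and bounded on $\partial \Omega \times \partial \Omega \setminus \{|\br-\br'|<1/n\}$ and zero elsewhere, $K_n$ is bounded and so $K_n \in L^2(\partial \Omega \times \partial \Omega)$. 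Each $T_n$ is therefore Hilbert--Schmidt and in particular compact on $L^2(\partial \Omega)$.

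Next, I would estimate $\|T - T_n\|$ using Schur's test. The operator $T-T_n$ has kernel supported on $\{|\br-\br'|<1/n\}$, where the hypothesis gives $|K(\br,\br')|\leq M|\br-\br'|^{\nu}$ (taking $n$ large enough so $1/n \le 1$). Since $\partial \Omega$ is a smooth, regular curve, near any fixed $\br \in \partial \Omega$ the arc-length measure is comparable to the Euclidean distance, so there is a constant $C$ (depending only on $\partial \Omega$) with
\begin{equation*}
\sup_{\br \in \partial \Omega} \int_{\partial \Omega \cap \{|\br-\br'|<1/n\}} |\br-\br'|^\nu \, \dd s(\br') \leq C \int_{0}^{1/n} t^\nu \, \dd t = \frac{C}{\nu+1} \left(\frac{1}{n}\right)^{\nu+1} \; ,
\end{equation*}
and the analogous bound holds with the roles of $\br$ and $\br'$ swapped. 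The exponent $\nu+1$ is positive since $\nu > -1$, so both row and column $L^1$-norms of the kernel of $T-T_n$ tend to zero as $n\to\infty$. By Schur's test, $\|T-T_n\|_{L^2(\partial \Omega)\to L^2(\partial \Omega)} \to 0$.

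Finally, since $T$ is the operator-norm limit of the compact operators $T_n$, it is itself compact on $L^2(\partial \Omega)$, completing the proof. The only mildly technical step is the Schur-type estimate, which depends on the smoothness of $\partial \Omega$ to convert the Euclidean-distance bound on $K$ into an integrable bound in arc length; this is where the condition $\nu > -1$ is used in an essential way.
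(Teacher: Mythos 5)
Your proof is correct: the paper does not actually prove this lemma, simply citing Theorem 2.22 of Kress, and your truncation-plus-Schur-test argument (Hilbert--Schmidt approximants off the diagonal, with the near-diagonal remainder small in operator norm because $\nu>-1$) is precisely the standard $L^2$ adaptation of that proof. The only step worth stating explicitly is that, since $\partial \Omega$ is a smooth, regular closed curve, for $n$ large enough the set $\{|\br-\brprime|<1/n\}$ is contained in a short arc around $\br$ on which chord length and arc length are comparable (points far apart in arc length stay a fixed Euclidean distance apart), which is what legitimizes reducing the Schur bound to the one-dimensional integral $\int_0^{1/n} t^{\nu}\,\dd t$.
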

	
	Before proceeding, we show that volume potentials and boundary layer potentials defined using the Green's
	function are indeed solutions of the homogeneous equation outside of the support of their densities.
	For PDE Green's functions this is a trivial step, but here the nonlocal terms in~\cref{eq:Lsigma} require
	some care. 
	
	\begin{lemma}\label{lem:sgsgphi}
		Suppose that $d_P\geq 3$ and that the coefficients are in the dissipative regime. Let 
		$\mu \in L^2(\Omega)$ and $\eta \in L^2(\partial \Omega)$ be given. Suppose that 
		$K_{\rm S}(\br,\br') = \partial_{\bn'}^{j} \partial_{\btau'}^k \GS(\br,\br')$ for $j+k \leq d_P-2$
		and let $\KS$ be the corresponding integral operator 
		$$ \KS[\eta](\br) = \int_{\partial \Omega} K_{\rm S}(\br,\br') \eta(\br') \, \dd s(\br') \; .$$
		Then, for $\br \in \bbR^2$,
		\begin{align}
			\Sthreed \left [  \VS[\mu] \right ](\br) &= \Vphi[\mu](\br) \label{eq:SVS} \; ,\\ 
			\Sthreed \left [ \KS[\eta] \right ](\br) &= \Kphi[\eta](\br) \label{eq:SKS}  \; ,
		\end{align}
		where $\Kphi$ is the integral operator for the kernel 
		$K_{\phi}(\br,\br') = \partial_{\bn'}^{j} \partial_{\btau'}^k \Gphi(\br,\br')$.
	\end{lemma}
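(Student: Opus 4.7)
The plan is to prove both identities by applying Fubini's theorem to interchange the order of integration and then recognizing the inner integral as the definition of $\Gphi$ (or a derivative thereof). The dissipative regime assumption is what makes this work cleanly, because then $\GS$ and its derivatives are absolutely integrable at infinity by \Cref{prop:sommerfeld}, so the principal value in the definition of $\Gphi$ collapses to an ordinary Lebesgue integral and Fubini applies without the usual technicalities.

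First I would handle \cref{eq:SVS}. Writing out the left-hand side,
\begin{equation*}
\Sthreed[\VS[\mu]](\br) = \pv \int_{\bbR^2} \frac{1}{4\pi|\br-\brprime|} \int_\Omega \GS(\brprime,\brprimeprime) \mu(\brprimeprime) \, \dd A(\brprimeprime) \, \dd A(\brprime) \; .
\end{equation*}
To justify the interchange of integrals, I would verify absolute integrability of the product $(4\pi|\br-\brprime|)^{-1} \GS(\brprime,\brprimeprime) \mu(\brprimeprime)$ on $\bbR^2\times \Omega$. The three potentially problematic regions are: (i) $\brprime$ near $\brprimeprime$, where \Cref{lem:smallr} bounds $\GS$ by $C|\brprime-\brprimeprime|^{d_P-3}\log|\brprime-\brprimeprime|$, which is locally integrable for $d_P\geq 3$; (ii) $\brprime$ near $\br$, where the Laplace kernel $1/|\br-\brprime|$ is locally integrable in $\bbR^2$ and $\GS$ is bounded by a constant depending on $\brprimeprime \in \Omega$; (iii) $|\brprime|\to \infty$, where $\GS = O(|\brprime|^{-3})$ by \Cref{prop:sommerfeld} and $1/|\br-\brprime| = O(1/|\brprime|)$, giving a product in $L^1$. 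Combining with $\mu \in L^2(\Omega) \subset L^1(\Omega)$ and boundedness of $\Omega$ gives absolute integrability. Fubini then allows us to swap integrals and identify the inner integral as $\Gphi(\br,\brprimeprime)$ by \cref{eq:Gphiid}, yielding $\Vphi[\mu](\br)$.

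The proof of \cref{eq:SKS} proceeds in two sub-steps. I would first replace $\KS[\eta]$ with its defining integral and swap the orders of integration over $\bbR^2$ and $\partial \Omega$ by the same Fubini argument, using that $\partial_{\bn'}^j \partial_{\btau'}^k \GS$ satisfies analogous singularity bounds: near the diagonal it is at worst of order $|\brprime-\brprimeprime|^{d_P-3-j-k}\log|\brprime-\brprimeprime|$, which remains locally integrable under the hypothesis $j+k\leq d_P-2$, and at infinity the decay of \Cref{prop:sommerfeld} carries over to derivatives. This produces
\begin{equation*}
\Sthreed[\KS[\eta]](\br) = \int_{\partial \Omega} \left [ \int_{\bbR^2} \frac{1}{4\pi|\br-\brprime|} \partial_{\bn''}^{j}\partial_{\btau''}^{k} \GS(\brprime,\brprimeprime) \, \dd A(\brprime) \right ] \eta(\brprimeprime) \, \dd s(\brprimeprime) \; .
\end{equation*}
The second sub-step is to pull the $\brprimeprime$-derivatives outside the inner integral so that the inner integral becomes $\partial_{\bn''}^{j}\partial_{\btau''}^{k} \Gphi(\br,\brprimeprime)$ by \cref{eq:Gphiid}. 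I would justify this by differentiation under the integral sign, using the fact that the differentiated integrand admits a $\brprimeprime$-independent integrable majorant on any neighborhood of a fixed boundary point (away from $\br$, where the singularity of $1/|\br-\brprime|$ is unrelated to $\brprimeprime$, the integrand is smooth in $\brprimeprime$ with bounds uniform in the normal/tangential directions).

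The main obstacle is the domination argument for Fubini and for differentiation under the integral sign, since the inner kernel $\GS$ has both a mild diagonal singularity and only algebraic decay at infinity, and the derivatives in the $\KS$ case strengthen the near-diagonal singularity. Restricting to the dissipative regime eliminates the most serious difficulty (slowly decaying oscillatory modes that would otherwise require a delicate principal-value argument), leaving a routine application of \Cref{lem:smallr} together with the decay estimates of \Cref{prop:sommerfeld}.
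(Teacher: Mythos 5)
Your proposal is correct and takes essentially the same route as the paper: establish absolute integrability of the iterated integrals using the near-diagonal asymptotics of \Cref{lem:smallr} and the $O(|\br|^{-3})$ far-field decay of \Cref{prop:sommerfeld} in the dissipative regime, then interchange by Fubini--Tonelli and identify the inner integral through \cref{eq:Gphiid}. The one detail worth making explicit in your boundary-case domination is that for $\br \in \partial\Omega$ the inner integral of the Laplace kernel against $|K_{\rm S}(\cdot,\br')|$ is logarithmically singular as $\br' \to \br$, so the outer integral is controlled by pairing this log singularity with $\eta \in L^2(\partial\Omega)$ via Cauchy--Schwarz (as the paper does), rather than by $\eta \in L^1$ alone.
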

	\begin{proof}
		We begin by letting
		\begin{equation*}
			I(\br,\br') = \int_{\bbR^2} \frac{1}{|\br-\tilde\br|}|G_S(\tilde \br, \br')| \, \dd A(\tilde \br)
			\quad \textrm{and} \quad J(\br,\br') = \int_{\bbR^2} \frac{1}{|\br-\tilde\br|}|K_{\rm S}(\tilde \br, \br')| \, 
			\dd A(\tilde \br) \; .
		\end{equation*}
		The value $I(\br,\br')$ exists for all~$\br,\br'$ because $G_S(\tilde \br, \br')=O(|\tilde \br- \br'|^{-3})$ as~$\tilde\br\to\infty$, and is bounded because~$G_S$ has at worst a logarithmic singularity. Similarly, $K_{\rm S}(\tilde \br, \br')$ has similar decay properties and is at worst~$O(|\tilde \br- \br'|^{-1})$ as~$\tilde\br\to \br'$. Then, $J(\br,\br')$ exists
		for $\br\ne \br'$ and standard estimates show that it has at worst a~$\log |\br- \br'|$ singularity as~$|\br- \br'|\to 0$.
		The integrals
		$\int_\Omega I(\br,\br') |\mu(\br')| \, \dd A(\br')$ and $\int_{\partial\Omega} J(\br,\br') |\eta(\br')| \, \dd s(\br')$ are thus finite by the Cauchy-Schwarz inequality. The desired formulas are therefore a consequence of the Fubini-Tonelli theorem and \eqref{eq:Gphiid}.
	\end{proof}

    The following corollaries follow immediately from the proof of the lemma above and the definitions of $\GS$ and $\Gphi$. 
	\begin{corollary}
		In the dissipative regime, for $\mu \in L^2(\Omega),$
		\begin{align}
			\Vphi[\mu](\br) &=  \int_{\mathbb{R}^2} \GS(\br,\br') \int_{\Omega} \frac{1}{4\pi |\brprime-\brprimeprime|} \mu(\brprimeprime) \, {\rm d}A(\brprimeprime) \, {\rm d}A(\brprime) \; ,\\
			\Sthreed [ \Vphi[\mu] ] (\br) &=   p(-\Delta_\br) \Vphi [\mu] (\br) - \int_{\mathbb{R}^2} \frac{1}{4\pi |\br-\brprime |} \mu(\brprime) \, \dd A(\brprime)  \label{eq:svphi} \; .
		\end{align}
		
	\end{corollary}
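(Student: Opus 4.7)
The plan is to derive both identities as direct consequences of \Cref{lem:sgsgphi}, the definition \eqref{eq:Gphiid} of $\Gphi$, and the commutativity of $\mathcal{L}$ and $\Sthreed$ as translation-invariant convolution operators on $\mathbb{R}^2$. Fubini--Tonelli will supply the needed interchanges of integration, and the integrability estimates carried out inside the proof of \Cref{lem:sgsgphi} cover the technical side in the dissipative regime.

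For the first identity I would begin from the right-hand side, apply Fubini to swap the $\mathbb{R}^2$ and $\Omega$ integrals, and recognize the resulting inner $\mathbb{R}^2$ integral as $\Gphi$ via the definition \eqref{eq:Gphiid}. The relabeling uses the symmetries $\GS(\br,\br')=\GS(\br',\br)$ and $\Gphi(\br,\br')=\Gphi(\br',\br)$, both of which follow from the purely radial dependence of these kernels visible in \eqref{eqn:general_G} and \eqref{eqn:general_Gphi}. After this step the outer integral collapses to $\Vphi[\mu](\br)$.

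For the second identity the key observation is that $\mathcal{L}$ and $\Sthreed$ commute, since both act as convolutions. Combined with the defining equation $\mathcal{L}[\GS(\cdot,\br')](\br)=\delta(\br-\br')$ and the identity $\Gphi(\cdot,\br')=\Sthreed[\GS(\cdot,\br')]$ coming directly from \eqref{eq:Gphiid}, this yields $\mathcal{L}[\Gphi(\cdot,\br')](\br)=1/(4\pi|\br-\br'|)$. Integrating against $\mu$ and invoking Fubini once more produces $\mathcal{L}[\Vphi[\mu]](\br)=\Sthreed[\mu](\br)$. Substituting the defining formula \eqref{eq:Lsigma} of $\mathcal{L}$ into the left-hand side and solving for $\Sthreed[\Vphi[\mu]]$ gives \eqref{eq:svphi}.

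The one real technical point is the justification of the various Fubini interchanges and, for the second identity, the differentiation-under-the-integral argument that underlies the commutation of $\mathcal{L}$, which contains the differential operator $p(-\Delta)$, with $\Sthreed$. In the dissipative regime this is clean: \Cref{prop:sommerfeld} gives the cubic decay $\GS,\Gphi=O(|\br|^{-3})$, and \Cref{lem:smallr} bounds the local singularities by at worst a logarithm, so every iterated integral is absolutely convergent and the Cauchy--Schwarz bound used in the proof of \Cref{lem:sgsgphi} carries over verbatim. The dissipative hypothesis enters only at this step, which is why the corollary is stated only in that regime.
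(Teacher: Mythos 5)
Your route is the same one the paper intends: the paper's entire ``proof'' is the remark that the corollary follows immediately from the integrability estimates in the proof of \Cref{lem:sgsgphi} together with the definitions of $\GS$ and $\Gphi$, and your argument -- Fubini--Tonelli justified by the $O(|\br|^{-3})$ decay of \Cref{prop:sommerfeld} and the at-worst-logarithmic singularities of \Cref{lem:smallr}, radial symmetry of the kernels for the first identity, and the convolution identity $\mathcal{L}[\Gphi(\,\cdot\,,\br')](\br)=\Sthreed[\mathcal{L}[\GS(\,\cdot\,,\br')]](\br)=1/(4\pi|\br-\br'|)$ for the second -- is exactly that, spelled out. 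The first identity is handled correctly and completely.

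For the second identity, however, your last step does not literally deliver the displayed formula, and you should have noticed this. Substituting \eqref{eq:Lsigma} into $\mathcal{L}[\Vphi[\mu]]=\Sthreed[\mu]$ and solving gives $\Sthreed[\Vphi[\mu]](\br)=\tfrac12\,p(-\Delta_\br)\Vphi[\mu](\br)-\int_{\mathbb{R}^2}\frac{1}{4\pi|\br-\brprime|}\mu(\brprime)\,\dd A(\brprime)$, with the factor $\tfrac12$ inherited from the $\tfrac12 p(-\Delta)$ in \eqref{eq:Lsigma}; on the Fourier side this is forced by $\widehat{\Gphi}=\tilde G(\xi)/(2|\xi|)$ together with \cref{eqn:poly}. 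The printed \eqref{eq:svphi} has $p(-\Delta_\br)$ with no $\tfrac12$. The missing factor appears to be a typo in the paper rather than an error in your reasoning -- the downstream kernel computations (e.g.\ the terms $\frac{1-|\gamma|}{2}\Gphi+\frac{\beta}{2}\Delta_{\brprime}\Gphi$ in $K_{00}$ for the capillary-gravity problem, and $-\frac{\alpha}{2}\Delta^2_{\brprime}\Gphi$ in the flexural case) are consistent only with the $\tfrac12$ version -- but as written your proposal claims that your algebra ``gives \eqref{eq:svphi}'', which is false as stated. A careful write-up must either derive the corrected identity and flag the discrepancy with the displayed equation, or explain why the factor is absent; silently asserting agreement is the one real gap here.
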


	\begin{corollary}
    \label{cor:actualsolution}
		Suppose that $d_P \geq 3$, the coefficients are in the dissipative regime, and that $\KS$ is a
		linear combination of boundary integral operators in the same form as \Cref{lem:sgsgphi}. Let
		$\mu \in L^2(\Omega)$, $\eta \in L^2(\partial \Omega)$, and $c\in \bbR$ be given. 
		Then $\sigma(\br) = \mu(\br) + \Vphi[\mu](\br) + c \VS[\mu](\br) + \KS[\eta](\br)$ 
		satisfies $\mathcal{L}[\sigma](\br) = 0$ for $\br \in D \setminus \overline{\Omega}$. 
        Moreover, $\mathcal{L}[\KS[\eta]](\br) = 0$ for $\br \in D \setminus \partial \Omega$.
	\end{corollary}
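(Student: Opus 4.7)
The plan is to treat each summand of $\sigma = \mu + \Vphi[\mu] + c\,\VS[\mu] + \KS[\eta]$ separately, combining the defining Green's function identity \cref{eq:f_gf} with the Fubini-type formulas \cref{eq:SVS,eq:SKS} of \Cref{lem:sgsgphi}. The crucial observation is that although $\VS$ gives the standard Green's-function identity $\mathcal{L}[\VS[\mu]]=\mu$, applying $\mathcal{L}$ to $\Vphi[\mu]$ produces an extra nonlocal term $\Sthreed[\mu]$, and the bare $\mu$ summand in $\sigma$ is included precisely to cancel it outside $\Omega$.

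For the volume pieces, I would first invoke \cref{eq:SVS} and \cref{eq:f_gf} to conclude that $\mathcal{L}[\VS[\mu]](\br)=\mu(\br)$, with $\mu$ extended by zero to $\bbR^2$. Since $\Vphi[\mu] = \Sthreed[\VS[\mu]]$ by \cref{eq:SVS}, and $\Sthreed$ commutes with both the constant-coefficient operator $p(-\Delta)$ and with itself, $\mathcal{L}$ commutes with $\Sthreed$ on sufficiently smooth functions, yielding $\mathcal{L}[\Vphi[\mu]] = \Sthreed[\mathcal{L}[\VS[\mu]]] = \Sthreed[\mu]$. For $\br\in D\setminus\overline{\Omega}$, the extended $\mu$ vanishes in a neighborhood of $\br$, so $p(-\Delta)\mu(\br)=0$, which gives $\mathcal{L}[\mu](\br) = -\Sthreed[\mu](\br)$, while $\mathcal{L}[\VS[\mu]](\br)=\mu(\br)=0$. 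Summing these three contributions cancels exactly:
\[ \mathcal{L}[\mu+\Vphi[\mu]+c\,\VS[\mu]](\br) \;=\; -\Sthreed[\mu](\br)+\Sthreed[\mu](\br)+0 \;=\; 0. \]
For the boundary term I would use \cref{eq:SKS} to write $\Sthreed[\KS[\eta]] = \Kphi[\eta]$ and differentiate under the integral (valid since $\br\notin\partial\Omega$ forces $\br\ne\br'$, so $K_{\rm S}(\cdot,\br')$ is smooth near $\br$). This gives
\[ \mathcal{L}[\KS[\eta]](\br) \;=\; \int_{\partial\Omega}\partial_{\bn'}^{j}\partial_{\btau'}^{k}\mathcal{L}[\GS(\cdot,\br')](\br)\,\eta(\br')\,\dd s(\br') \;=\; 0, \]
since $\mathcal{L}[\GS(\cdot,\br')](\br) = \delta(\br-\br') = 0$ for $\br\ne\br'$. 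This proves the ``moreover'' claim and, combined with the preceding computation, yields $\mathcal{L}[\sigma](\br)=0$ throughout $D\setminus\overline{\Omega}$.

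The main technical hurdle is justifying these interchanges of $\mathcal{L}$ with the surface and boundary integrals rigorously for $\mu \in L^2(\Omega)$ and $\eta\in L^2(\partial\Omega)$. The dissipative regime supplies the $|\br|^{-3}$ decay of \Cref{prop:sommerfeld} that makes the iterated integrals in \cref{eq:SVS,eq:SKS} absolutely convergent, and the hypothesis $d_P\geq 3$ combined with \Cref{lem:genregvolume} places $\VS[\mu]$ in a Sobolev space of high enough order for $p(-\Delta)$ to act in the $L^2$ sense. With these ingredients in place, all of the formal manipulations above become legitimate.
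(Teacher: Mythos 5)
Your proposal is correct and follows essentially the same route the paper intends: the paper's proof is just the remark that the corollary "follows immediately from the proof of the lemma above and the definitions of $\GS$ and $\Gphi$," i.e., the Fubini/decay estimates of \Cref{lem:sgsgphi} justify interchanging $\mathcal{L}$ with the volume and boundary integrals, after which $\mathcal{L}[\GS(\cdot,\br')]=\delta(\br-\br')$ and $\Gphi=\Sthreed[\GS]$ give exactly your cancellation $\mathcal{L}[\Vphi[\mu]]=\Sthreed[\mu]=-\mathcal{L}[\mu]$ outside $\overline{\Omega}$ and $\mathcal{L}[\KS[\eta]]=0$ off $\partial\Omega$.
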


	\section{The integral representations}\label{sec:reductioninteq}
	Because of the nonlocal nature of \eqref{eq:reducedbvp}, it is convenient to construct a global representation of the solution from a single Green's function. Since we require that the equation be automatically satisfied in the unbounded exterior region $D \, \backslash \, \overline{\Omega}$, we use the exterior Green's function, and its derivatives with respect to the `source' variable. We illustrate this construction with two examples: capillary-gravity waves and flexural-gravity waves. A key feature of our approach is that the surface-boundary portions of the representation can be constructed using ingredients from standard BIE representations for solving PDEs for exterior problems. Roughly speaking, if an integral representation exists for the solution $u$ to the boundary value problem \begin{align*}
		\begin{cases}
			\begin{aligned}
				\Aext[0,u] &=0 \, ,\quad && {\rm in} \,\, D \setminus \overline{\Omega} \, ,\\
				\Bop[0,u] &= g \, ,\quad && {\rm on}\,\,\partial \Omega \, , \\
			\end{aligned}
		\end{cases}
	\end{align*}
	then, by substituting $\GS$ into the integral representation for $u$, we obtain a suitable candidate function $B_j$ in our representation \eqref{eq:ansatz}. The surface-volume portion of the representation $V(\br,\brprime)$ we obtain by taking appropriate linear combinations of $\GS$ and its derivatives.

    The uniqueness of the specific PDE boundary value problems treated below can be established with standard
    arguments in the dissipative regime, assuming certain decay conditions on the 
    potential $\phi$ and its derivatives. It is then possible to establish the uniqueness of solutions
    of the corresponding integro-differential boundary value problem, assuming that $\sigma$ satisfies the
    same radiation condition as $\GS$ (see \Cref{prop:sommerfeld}). Finally, the uniqueness of the 
    solutions of the integro-differential boundary value problem can be used to establish 
    the invertibility of the corresponding integral equation systems, and even their invertibility 
    in the non-dissipative regime by the Gohberg-Sigal theory (with the possible exception of a nowhere
    dense set of parameters). These arguments were taken up in detail previously for
    variable rigidity flexural-gravity wave scattering~\cite{askham2025integral}. For the sake of
    brevity, we simply assume the uniqueness of solutions of the integro-differential boundary 
    value problem when establishing the invertibility of the integral equations below.
    
	\subsection{Exterior capillary-gravity waves}\label{sec:capillarygravity}
	
	The first example we consider is the exterior capillary-gravity wave problem, which arises in the case of porous or partial membranes \cite{yip2001wave,kim1996flexible,manam2012mild,ding2019bragg} and surface films of finite area \cite{dore1982oscillatory, dore1985theory}. In this case, we
	consider a system of the form
	\begin{equation}
		\begin{cases}
			\begin{aligned}
				\Aext [\phi,\partial_z\phi] &:= (-\beta \Delta + \gamma)\partial_z \phi - \phi = 0 \, , && \text{ in } D \setminus \overline{\Omega} \, ,  \\ 
				\Aint [\phi,\partial_z\phi ] &:= \partial_z \phi - \phi = f \, ,  && \text{ in } \Omega \, ,  \\ 
				\Bop_D[\phi,\partial_z\phi] &:= \gamma_0^{+}\partial_z \phi = g \, , \ \text{ or } \ \Bop_N[\phi,\partial_z\phi] := \gamma_1^{+} \partial_z \phi = g \, , && \text{ on } \partial \Omega \, ,
			\end{aligned} 
		\end{cases} \label{eq:extcapbvp}
	\end{equation}
	where $\beta \geq 0$ is related to the surface tension. Two of the common boundary conditions ~\cite{yip2001wave} for the interface 
	$\partial \Omega$ are provided and the appropriate data $g$ depend on the boundary condition
	selected. The Neumann condition $\mathcal{B}_N$ corresponds to a freely floating membrane while the Dirichlet condition $\mathcal{B}_D$ corresponds to a membrane which is fixed using, for instance, a loop or a string.  For ease of exposition, we have set the coefficient in front of the $\partial_z \phi$ term in $\Aint$ to be unity, though the approach in this section applies for any nonzero coefficient after suitable rescaling. 
	
	We now turn to the derivation of the integral equation formulation of this problem. Setting $\phi = \Sthreed[\sigma]$ as before, the system \cref{eq:extcapbvp} becomes
	\begin{equation}
		\begin{cases}
			\begin{aligned}
				\Lext[\sigma] &:= \frac12 (-\beta \Delta + \gamma)\sigma - \Sthreed[\sigma] = 0    \, , && \text{ in } D \setminus \overline{\Omega} \, ,  \\
				\Lint[\sigma] &:= \frac12 \sigma - \Sthreed[\sigma] = f \, , && \text{ in } \Omega \, ,  \\
				\Mop_D[\sigma] &:= \frac{1}{2} \gamma_0^+ \sigma = g \, ,  \ \text{ or } \  \Mop_N[\sigma] := \frac{1}{2} \gamma_1^{+} \sigma = g \, , && \text{ on } \partial \Omega \, .
			\end{aligned}
		\end{cases} \label{eq:capillaryintegrodiff}
	\end{equation}
	The operator $\Lext$ is then in the general form \cref{eq:Lsigma} where $p(z) = \beta z + \gamma$, so that 
	$d_P=3$ in the notation of \Cref{sec:green}. 
	
	The design of our integral representation
	of $\sigma$ is driven by the fact that the Green's function for $\Lext$ is a relatively
	smooth perturbation of the Laplace Green's function, i.e. $\GS + 2 / \beta \GL$ has continuous
	derivatives and its second derivatives are $\log$ singular (see \Cref{lem:smallr}). 
	One immediate consequence is that the jump properties of the boundary layer potentials are analogous 
	to the standard jump properties of Laplace boundary layer potentials.
	\begin{lemma}
		\label{lem:capjumps}
		Let $\GS$ be the Green's function for $\Lext$ and let 
		\begin{align}
			\mathcal{S}_{\rm S}[\eta](\br) &= \int_{\partial \Omega} \GS(\br,\br') \eta(\br')\, \dd s(\br') \; , \label{eq:SSdef} \\
			\mathcal{D}_{\rm S}[\eta](\br) &= \int_{\partial \Omega} \bn(\br')\cdot \nabla_{\br'} \GS(\br,\br') \eta(\br')\, \dd s(\br') \; . \label{eq:DSdef}
		\end{align}
		If $\eta \in L^2(\partial \Omega)$, then for almost every $\br_0\in \partial \Omega$,
		\begin{align*}
			\gamma_0^{\pm} (\mathcal{D}_{\rm S}[\eta])(\br_0) &= \mp \frac{1}{\beta} \eta(\br_0) + 
			\underbrace{\pv  \int_{\partial \Omega}\bn(\br')\cdot \nabla_{\br'} \GS(\br_0,\br') \eta(\br')\, \dd s(\br')}_{=:\mathfrak{D}_{\rm S}[\eta](\br_0)} \; , \\
			\gamma_1^{\pm} (\mathcal{S}_{\rm S}[\eta])(\br_0) &= \pm \frac{1}{\beta} \eta(\br_0) + 
			\underbrace{\pv  \int_{\partial \Omega}\bn(\br_0)\cdot \nabla_\br \GS(\br_0,\br') \eta(\br')\, \dd s(\br')}_{=:\mathfrak{S}'_{\rm S}[\eta](\br_0)} \; .
		\end{align*}
		Moreover, $\SSi[\eta]$ for $\eta \in L^2(\partial \Omega)$ and $\partial_\bn \DS[\eta]$
        for $\eta \in H^{1/2}(\partial \Omega)$ are continuous across the boundary and 
        $\mathfrak{D}_{\rm S},\mathfrak{S}'_{\rm S}:H^{s}(\partial \Omega)\to H^{s+1}(\partial \Omega)$
        for any $s\geq 0$.
	\end{lemma}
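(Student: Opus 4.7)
The plan is to exploit the decomposition $\GS = -\tfrac{2}{\beta}\GL + R$ with $R \in H^2_{\rm loc}(\bbR^2)$ supplied by \Cref{lem:smallr} for $p(z) = \beta z + \gamma$. This reduces the jump analysis to the classical Laplace layer potential theory plus a smoother remainder that contributes no jump. Correspondingly, I write $\SSi = -\tfrac{2}{\beta}\mathcal{S}_L + \mathcal{S}_R$ and $\DS = -\tfrac{2}{\beta}\mathcal{D}_L + \mathcal{D}_R$, where $\mathcal{S}_L, \mathcal{D}_L$ are the classical Laplace single and double layer potentials on $\partial \Omega$, and $\mathcal{S}_R, \mathcal{D}_R$ are the integral operators with kernels $R(\br-\br')$ and $\partial_{\bn'}R(\br-\br')$, respectively.

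For the jump identities, I would invoke the standard non-tangential limits for Laplace layer potentials on $L^2$ densities, $\gamma_0^{\pm}\mathcal{D}_L[\eta](\br_0) = \pm\tfrac12\eta(\br_0) + \mathfrak{D}_L[\eta](\br_0)$ and $\gamma_1^{\pm}\mathcal{S}_L[\eta](\br_0) = \mp\tfrac12\eta(\br_0) + \mathfrak{S}'_L[\eta](\br_0)$, with the outward-normal convention fixed in \Cref{sec:green}. Multiplying through by $-\tfrac{2}{\beta}$ produces the claimed $\mp\tfrac{1}{\beta}\eta$ and $\pm\tfrac{1}{\beta}\eta$ leading jump terms. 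For the remainder, $R \in H^2_{\rm loc}$ together with Sobolev embedding in $\bbR^2$ implies that $R$ is locally Hölder continuous and $\partial_{\bn'} R(\br-\br')$ has at worst a logarithmic singularity on the diagonal. A standard weakly singular kernel argument in the spirit of \Cref{lem:gencompact} then shows that $\mathcal{S}_R[\eta]$ and $\mathcal{D}_R[\eta]$ extend continuously across $\partial \Omega$ for $\eta \in L^2(\partial \Omega)$, so they feed only into the principal-value parts, furnishing the definitions of $\mathfrak{D}_{\rm S}$ and $\mathfrak{S}'_{\rm S}$.

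The continuity of $\SSi[\eta]$ across $\partial \Omega$ for $\eta \in L^2$ is immediate from the logarithmic singularity of $\GS$. For the continuity of $\partial_\bn \DS[\eta]$ on $H^{1/2}(\partial \Omega)$, the Laplace piece reduces to the classical hypersingular operator, whose two-sided normal trace coincides as an element of $H^{-1/2}(\partial \Omega)$; the remainder $\partial_\bn\mathcal{D}_R$ involves two derivatives of $R$, which lie only in $L^2_{\rm loc}$, and I would handle it by a Maue-type rewriting that trades a normal derivative for a tangential one, allowing integration by parts along $\partial \Omega$ to transfer the derivative onto the $H^{1/2}$ density. Finally, for the mapping property $\mathfrak{D}_{\rm S},\mathfrak{S}'_{\rm S}:H^{s}(\partial \Omega) \to H^{s+1}(\partial \Omega)$ with $s \geq 0$, the Laplace principal-value kernels $\bn(\br')\cdot(\br-\br')/|\br-\br'|^2$ are in fact smooth on $\partial \Omega \times \partial \Omega$ (the numerator vanishes to second order on the diagonal for a smooth curve) and therefore smooth by arbitrarily many derivatives, while the remainder kernels have at worst $\log$-singularities and gain one derivative via standard weakly-singular kernel estimates. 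The step I expect to be the most delicate is the $H^{1/2}$-continuity claim for $\partial_\bn \DS$, since the hypersingular operator falls outside the weakly-singular framework and the $H^2_{\rm loc}$ remainder does not admit a naive pointwise two-sided-limit argument; the Maue-type identity and a careful integration by parts are where care will be required.
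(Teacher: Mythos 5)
Your overall route is the same as the paper's: split $\GS = -\frac{2}{\beta}\GL + R$ using \Cref{lem:smallr}, read off the $\mp\frac{1}{\beta}$ and $\pm\frac{1}{\beta}$ jumps from the classical Laplace relations (your sign bookkeeping is correct), let the remainder contribute only to the principal-value parts, and justify the $H^{s}\to H^{s+1}$ mapping property by the smoothness of the Laplace kernels restricted to a smooth curve plus the weak singularity of the remainder kernels --- which is essentially the level of detail the paper itself gives.

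There are, however, two places where your control of the remainder does not hold up as written, both in the step you yourself flag as delicate. First, $R\in H^2_{\rm loc}(\bbR^2)$ together with Sobolev embedding does not give pointwise (logarithmic) control of $\nabla R$: an $H^1$ function in two dimensions need not be log-bounded. What you actually need --- and what the paper uses --- is the explicit expansion in \Cref{lem:smallr} with $d_P=3$: after the $\log|\br|$ term is removed, the remainder has a \emph{continuous} gradient and its second derivatives are merely logarithmically singular. Second, your plan to treat $\partial_\bn \mathcal{D}_R$ by a Maue-type rewriting is not available as stated: Maue-type identities trade normal for tangential derivatives by exploiting that the kernel is a fundamental solution of the underlying PDE, and $R$ is not such a kernel. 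With the sharper asymptotics just mentioned, this machinery is unnecessary: the kernel $\partial_\bn\partial_{\bn'}R$ is only log-singular, so $\partial_\bn\mathcal{D}_R[\eta]$ is a weakly singular potential that is continuous across $\partial\Omega$ (in the spirit of \Cref{lem:gencompact}), and only the Laplace hypersingular piece requires the classical two-sided-trace (Lyapunov--Tauber) result on $H^{1/2}(\partial\Omega)$, exactly as in your Laplace reduction. With these two repairs your argument coincides with the paper's proof.
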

	\begin{proof}
		By \Cref{lem:smallr}, $\GS+2/\beta \GL$ has a continuous gradient and its second derivative
        is merely logarithmically singular. The jump results then hold by 
		the standard jump relations for the Laplace Green's function~\cite{kress1999linear,hackbusch2012integral}
        and the regularity results follow from the fact that the normal derivative of the Laplace Green's function is a smooth
        function when restricted to a smooth curve and that operators with kernels containing up to two derivatives of $\GS+2/\beta \GL$ are compact by
        \Cref{lem:gencompact}.
	\end{proof}
	
	Consider the system for the Neumann boundary condition. To obtain a second kind equation on the boundary, a single layer 
	potential is used, by analogy with the Laplace case. For the volumetric term, since convolution with $\GS$ is regularizing, we construct our kernel by taking sufficient derivatives of $\GS$ so that the resulting volumetric portion of the integral equation is second-kind. In particular, in the form of the ansatz \eqref{eq:ansatz}, the kernels $V$ and $B$ we select are:
	\begin{equation}
		V(\br,\brprime) = - \frac{\beta}{2} \Delta_{\brprime} \GS(\br,\brprime)  +  \frac{|\gamma|}{2} \GS(\br,\brprime) \, , \quad 
		B(\br,\brprime) = \beta \GS(\br,\brprime) \, ,
		\label{eq:capillarykernelsN}
	\end{equation}
	where we have added the non-Laplacian term in $V$ for the sake of the invertibility analysis further below.
	It follows immediately from the definition of $\GS$ that
	\begin{equation}\label{eq:capkernV}
		V(\br,\br') = \delta(\br-\br') + \frac{|\gamma|-\gamma}{2} \GS(\br,\brprime) + \Gphi(\br,\brprime) \, .
	\end{equation}
	Note that whenever $\gamma$ is positive this kernel simplifies to $V(\br,\brprime) = \delta(\br-\brprime) + \Gphi(\br,\brprime)$. Applying the interior operator $\Lint$ and boundary operator $\Bop_N$ to this representation results 
	in a system of integral equations of the form \eqref{eq:vie}-\eqref{eq:bie}, where the kernels are given by:
	\begin{align*}
		K_{00}(\br,\brprime) &= \frac{|\gamma|-\gamma}{4}\GS(\br,\brprime) + \frac{1-|\gamma|}{2} \Gphi(\br,\brprime) + \frac{\beta}{2} \Delta_{\brprime} \Gphi(\br,\brprime) \, ,  \\
		K_{10}(\br,\brprime) &=   \frac{|\gamma| - \gamma}{4} \partial_{\bn} \GS(\br,\brprime) + \frac{1}{2} \partial_{\bn} \Gphi(\br,\brprime)  \; , \\
		K_{01} (\br,\brprime) &= \frac{\beta}{2}\GS(\br,\brprime) - \beta \Gphi(\br,\brprime)   \; ,\\
		K_{11} (\br,\brprime) &= \frac{\beta}{2} \partial_{\bn} \GS(\br,\brprime) \; ,
	\end{align*}
	and the identity terms are given by $T_0 = T_1 = \frac{1}{2}.$
	
	For the Dirichlet problem, we may use the same volume potential but the boundary potential 
	is instead a double layer potential, i.e. 
	\begin{equation}
		V(\br,\brprime) = - \frac{\beta}{2} \Delta_{\brprime} \GS(\br,\brprime)  +  \frac{|\gamma|}{2} \GS(\br,\brprime) \, , \quad B(\br,\brprime) = \beta \partial_{\bn'}\GS(\br,\brprime).
		\label{eq:capillarykernelsD}
	\end{equation}
	Then, the kernel $K_{00}$ stays the same, while the rest of the kernels in the integral equation are modified as such:
	\begin{align*}
		K_{10}(\br,\brprime) &=   \frac{|\gamma| - \gamma}{4}  \GS(\br,\brprime) + \frac{1}{2}  \Gphi(\br,\brprime) \, , \\
		K_{01} (\br,\brprime) &= \frac{\beta}{2} \partial_{\bn'}\GS(\br,\brprime) -\beta  \partial_{\bn'} \Gphi(\br,\brprime) \, ,  \\
		K_{11} (\br,\brprime) &= \frac{\beta}{2} \partial_{\bn'} \GS(\br,\brprime) \, ,
	\end{align*}
	and the identity terms are given by $T_0= -T_1 = \frac{1}{2}$. The following pertains to the Fredholm structure of the resulting systems for the Neumann and Dirichlet problems.
	\begin{theorem}\label{thm:capfredholm}
		The system of integral equations given by \cref{eq:vie,eq:bie} is Fredholm second kind on $L^2(\Omega) \times L^2(\partial \Omega)$ for both the Neumann problem with the representation defined by \cref{eq:capillarykernelsN} and the Dirichlet
		problem with the representation defined by \cref{eq:capillarykernelsD}.    
	\end{theorem}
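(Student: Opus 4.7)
The plan is to show that each integral operator block $K_{ij}$ in the system \cref{eq:vie,eq:bie} is compact between the appropriate $L^2$ spaces. Since the diagonal identity terms $T_0,T_1$ in both representations are nonzero constants, the system then takes the form $D + K$ with $D$ an invertible diagonal of scalar multiples of the identity and $K$ compact on $L^2(\Omega)\times L^2(\partial\Omega)$, so the system is Fredholm of the second kind.

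I would begin with the volume-volume block $K_{00}$, which is identical in both representations and is the most delicate because of the $\tfrac{\beta}{2}\Delta_{\brprime}\Gphi$ term. By translation invariance, the associated integral operator acting on $\mu \in L^2(\Omega)$ equals $\tfrac{\beta}{2}\Delta\Vphi[\mu]$ plus multiples of $\VS[\mu]$ and $\Vphi[\mu]$. With $d_P = 3$, \Cref{lem:genregvolume} provides $\VS:L^2(\Omega)\to H^{2}_{\rm loc}(\bbR^2)$ and $\Vphi:L^2(\Omega)\to H^{3}_{\rm loc}(\bbR^2)$. Localizing to a bounded neighborhood of $\overline{\Omega}$, the worst term $\Delta\Vphi[\mu]$ lies in $H^{1}(\Omega)$ while the remaining pieces sit in $H^{2}(\Omega)$ or smoother; Rellich's lemma then yields compactness of $K_{00}$ on $L^2(\Omega)$.

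For the off-diagonal blocks I would argue via regularity gains followed by Rellich. The boundary-to-volume block $K_{01}:L^2(\partial\Omega)\to L^2(\Omega)$ has kernel built from $\GS,\Gphi$ (Neumann case) or their first normal derivatives in the source variable (Dirichlet case); \Cref{lem:genregbdry} with $\ell+m\le 1$ and $d_P=3$ gives output in $H^{1/2}(\Omega)$ or better, compactly embedded in $L^2(\Omega)$. The volume-to-boundary block $K_{10}:L^2(\Omega)\to L^2(\partial\Omega)$ is handled by combining \Cref{lem:genregvolume} with the trace theorem: $\VS[\mu]$ and $\Vphi[\mu]$ lie in $H^2$ and $H^3$ near $\overline{\Omega}$, so their traces (Dirichlet case) and normal traces (Neumann case) on $\partial\Omega$ lie in $H^{1/2}(\partial\Omega)$ or better, again compact into $L^2(\partial\Omega)$ by Rellich. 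Finally, the boundary-boundary block $K_{11}$ is precisely the principal-value part $\tfrac{\beta}{2}\mathfrak{S}'_{\rm S}$ (Neumann) or $\tfrac{\beta}{2}\mathfrak{D}_{\rm S}$ (Dirichlet) identified in \Cref{lem:capjumps}, each mapping $H^{s}(\partial\Omega)\to H^{s+1}(\partial\Omega)$ for $s\ge 0$; Rellich gives compactness on $L^2(\partial\Omega)$.

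The main obstacle I anticipate is the bookkeeping for the $\Delta_{\brprime}\Gphi$ kernel in $K_{00}$: viewed as a pseudo-differential symbol it is only bounded on $L^2$, so the argument must exploit the full three orders of smoothing provided by $\Vphi$ in \Cref{lem:genregvolume} to retain one order of regularity after taking the Laplacian, which is exactly what is needed to invoke Rellich. Once that is in hand, the remaining three blocks are straightforward applications of the Sobolev regularity toolkit assembled in \Cref{sec:green} and the jump relations of \Cref{lem:capjumps}.
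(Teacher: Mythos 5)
Your proposal is correct and follows essentially the same route as the paper: reduce the claim to compactness of each block $K_{ij}$ and obtain it from \cref{lem:genregvolume,lem:genregbdry}, the trace theorem, Rellich's lemma, and the smoothing of the principal-value boundary operators in \cref{lem:capjumps}. The only cosmetic differences are that the paper handles the boundary-to-volume blocks by noting they are adjoints of volume-to-boundary operators rather than citing \cref{lem:genregbdry} directly, and treats $K_{11}$ by comparison with the Laplace kernel rather than via the $H^{s}\to H^{s+1}$ mapping property, but these are equivalent in substance.
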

	
	\begin{proof} 
		It suffices to show that the integral operators with kernels $K_{ij},$ $i,j=0,1$ are compact. For the volume-to-volume and
		volume-to-boundary operators, compactness
		follows from \Cref{lem:genregvolume}, the trace theorem, and Rellich's lemma. The boundary-to-volume
		operators are adjoints of volume-to-boundary operators covered by these same results, and thus also
		compact.
		The kernel $K_{11} = \beta \partial_{\bn} G(\br,\br')$ resulting from the Neumann 
		representation is a perturbation of $- \partial_{\bn}\GL(\br,\br')$,
		which is well-known to be smooth on a smooth boundary curve. The difference 
		$K_{11} +  \partial_{\bn}\GL(\br,\br')$ is continuous, and hence the integral operator with kernel
		$K_{11}$ is compact. 
		The reasoning for the Dirichlet case is similar. 
	\end{proof}

	The following result establishes the invertibility of our integral equations.
	
	\begin{theorem} \label{thm:capuniqueness}
	Suppose that the coefficients are in the dissipative regime and that solutions of the integro-differential boundary value problems \cref{eq:capillaryintegrodiff}, supplemented with the decay condition $\sigma(\br) =\mathcal{O}(1/|{\br}|^3)$ as $|\br| \to \infty$, are unique for both the Neumann and Dirichlet cases. Then, the system of integral equations given by \cref{eq:vie,eq:bie} is invertible on $L^2(\Omega) \times L^2(\partial \Omega)$ for both the Neumann problem with the representation defined by \cref{eq:capillarykernelsN} and the Dirichlet problem with the representation defined by \cref{eq:capillarykernelsD}.
	\end{theorem}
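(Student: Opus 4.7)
The plan is to apply the Fredholm alternative. By \Cref{thm:capfredholm} the system has the form $I + \mathcal{K}$ with $\mathcal{K}$ compact on $L^2(\Omega) \times L^2(\partial \Omega)$, so invertibility reduces to injectivity. Given a null solution $(\mu,\eta)$ I would define $\sigma$ by the ansatz~\eqref{eq:ansatz} with the kernels~\eqref{eq:capillarykernelsN} or~\eqref{eq:capillarykernelsD}. \Cref{cor:actualsolution} gives $\Lext[\sigma] = 0$ in $D \setminus \overline{\Omega}$, while the homogeneous integral equations enforce $\Lint[\sigma] = 0$ in $\Omega$ and $\Mop[\sigma] = 0$ on $\partial \Omega$. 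In the dissipative regime, \Cref{prop:sommerfeld}, combined with the compact supports of $\mu$ and $\eta$, yields $\sigma(\br) = \mathcal{O}(|\br|^{-3})$ at infinity. The hypothesized uniqueness of the integro-differential boundary value problem then forces $\sigma \equiv 0$ on $\bbR^2$.

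The heart of the argument is to pass from $\sigma \equiv 0$ to $\mu = \eta = 0$. First I would apply $\Lext$ to $\sigma$ restricted to $\Omega$. The identity $\Lext[\VS \mu] = \mu\,\mathbf{1}_\Omega$ is immediate from $\Lext[\GS(\cdot,\br')] = \delta_{\br'}$, and a short computation combining $\Gphi = \Sthreed[\GS]$ with the relation $(-\beta \Delta + \gamma) \GS = 2 \Gphi + 2 \delta$ gives $\Lext[\Vphi \mu] = \Sthreed[\mu]$. The nonlocal $\Sthreed \mu$ contributions then cancel exactly, leaving the coercive elliptic equation
\begin{equation*}
(-\beta \Delta + |\gamma|) \mu = 0 \qquad \text{in } \Omega.
\end{equation*}
The role of the $|\gamma|\, \GS/2$ term added to $V$ in~\eqref{eq:capillarykernelsN}--\eqref{eq:capillarykernelsD} now becomes transparent: it shifts the zeroth-order coefficient from $\gamma$ to $|\gamma|$, which is strictly positive in the dissipative regime.

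To extract boundary conditions on $\mu$ I would compute the distributional jumps of $\sigma$ and $\partial_\bn \sigma$ across $\partial \Omega$. Writing $\sigma = \mu\, \mathbf{1}_\Omega + \tilde V \mu + B \eta$, the indicator piece contributes a value jump $-\gamma_0^- \mu$ and, on distributional differentiation, a surface-measure term proportional to $\gamma_0^- \mu$; the volume term $\tilde V \mu$ is regular across $\partial \Omega$; and $B \eta$ contributes the jumps provided by \Cref{lem:capjumps}, namely no value jump but a $2 \eta$ jump in $\partial_\bn$ in the Neumann case, and a $-2\eta$ value jump with continuous normal derivative in the Dirichlet case. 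Matching these against $\sigma \equiv 0$ and $\partial_\bn \sigma \equiv 0$ yields $\gamma_0^- \mu = 0$ and $\gamma_1^- \mu = 2\eta$ (Neumann), or $\gamma_0^- \mu = -2\eta$ and $\gamma_1^- \mu = 0$ (Dirichlet). In either case, the appropriate homogeneous boundary condition together with the coercive elliptic equation above forces $\mu \equiv 0$ in $\Omega$ by a standard energy estimate, and the remaining jump relation then gives $\eta = 0$.

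The main obstacle is the careful bookkeeping of the distributional jumps. The indicator term $\mu\, \mathbf{1}_\Omega$ must be differentiated in a way that produces surface measures, and these must balance precisely against the explicit value and normal-derivative jumps of the boundary layer $B \eta$. The clean reduction to an elliptic PDE for $\mu$ also relies on an exact cancellation of the $\Sthreed \mu$ terms, a cancellation engineered by the particular choice of $V$ in~\eqref{eq:capillarykernelsN}--\eqref{eq:capillarykernelsD}.
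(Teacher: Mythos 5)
Your overall route is the same as the paper's: reduce to injectivity via the Fredholm alternative and \Cref{thm:capfredholm}, build $\sigma$ from a null vector by the ansatz \cref{eq:ansatz}, use \Cref{cor:actualsolution} and the hypothesized uniqueness of \cref{eq:capillaryintegrodiff} to conclude $\sigma\equiv 0$, apply the exterior operator inside $\Omega$ to obtain the screened Poisson equation $-\tfrac{\beta}{2}\Delta\mu+\tfrac{|\gamma|}{2}\mu=0$ (your cancellation of the $\Sthreed$ terms and your reading of the role of the $\tfrac{|\gamma|}{2}\GS$ term in $V$ are exactly the paper's), and then extract homogeneous Dirichlet (Neumann problem) or Neumann (Dirichlet problem) data for $\mu$ from the jumps of $\sigma$ and $\partial_\bn\sigma$, concluding $\mu\equiv0$ and then $\eta\equiv 0$. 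Your jump bookkeeping is consistent with \Cref{lem:capjumps} in structure (your constants differ from those displayed in the paper's proof, but this is immaterial: once $\mu\equiv 0$ either version forces $\eta\equiv0$), and your observation that $\sigma=\mathcal{O}(|\br|^{-3})$ follows from \Cref{prop:sommerfeld} together with the compact supports is a point the paper leaves implicit.

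The genuine gap is regularity. You start from $(\mu,\eta)\in L^2(\Omega)\times L^2(\partial\Omega)$ and immediately invoke the traces $\gamma_0^-\mu$ and $\gamma_1^-\mu$, surface measures with density $\gamma_0^-\mu$ arising from differentiating $\mu\,\mathbf{1}_\Omega$, the continuity of $\partial_\bn\DS[\eta]$ across $\partial\Omega$ in the Dirichlet case, and a ``standard energy estimate'' for the screened Poisson problem. None of these is available at the $L^2$ level: an $L^2(\Omega)$ function has no boundary trace, and \Cref{lem:capjumps} guarantees continuity of $\partial_\bn\DS[\eta]$ only for $\eta\in H^{1/2}(\partial\Omega)$ (for $\eta\in L^2$ this is a hypersingular operator). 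The paper closes this hole with a bootstrapping step you omit: the homogeneous integral equations together with the smoothing properties in \Cref{lem:genregvolume,lem:genregbdry,lem:capjumps} first give $\mu\in H^1(\Omega)$ and $\eta\in H^{1/2}(\partial\Omega)$ (Neumann) or $\eta\in H^{3/2}(\partial\Omega)$ (Dirichlet); then, once $\sigma\equiv 0$, rearranging the identity $0=\mu+\tfrac{|\gamma|-\gamma}{2}\VS[\mu]+\Vphi[\mu]+\int_{\partial\Omega}B(\cdot,\br')\eta(\br')\,\dd s(\br')$ for $\mu$ and applying the same regularity results upgrades to $\mu\in H^2(\Omega)$, after which the trace/jump computations and the uniqueness argument for the screened Poisson equation are legitimate. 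Without this step (or an equivalent justification of your distributional jump computation), the passage from $\sigma\equiv 0$ to $\mu=\eta=0$ does not go through as written; with it, your argument coincides with the paper's.
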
 
	\begin{proof}
    By the Fredholm alternative and \Cref{thm:capfredholm}, it is sufficient to show the uniqueness of solutions of the 
    integral equation systems.
		Suppose that $\mu \in L^2(\Omega)$ and $\eta \in L^2(\partial\Omega)$ satisfy the homogeneous version of \cref{eq:vie,eq:bie} for either the Dirichlet or Neumann problem. Since the densities $\mu,\eta$ solve the integral equation with zero right-hand side, and the operators in the integral equation are smoothing by \Cref{lem:genregvolume,lem:genregbdry,lem:capjumps}, a standard bootstrapping argument, i.e. iteratively
        applying the regularity results to the integral equations,
        implies that $\mu \in H^1(\Omega)$ and that $\eta \in H^{1/2}(\partial \Omega)$ in the Neumann case and $\eta \in H^{3/2}(\partial \Omega)$ in the Dirichlet case. 
        
        Let $\sigma$ be the corresponding surface density defined by the
        ansatz~\cref{eq:ansatz}, which satisfies the interior equation and boundary condition in \cref{eq:capillaryintegrodiff} by construction and the exterior equation in \cref{eq:capillaryintegrodiff} by \Cref{cor:actualsolution}.
        Uniqueness of the integro-differential equation \cref{eq:capillaryintegrodiff} implies that $\sigma \equiv 0.$
        Recalling the representation of $\sigma$, we have that
        $$ 0 = \sigma(\br) = \mu(\br) + \frac{|\gamma|-\gamma}{2}\VS[\mu](\br) + \Vphi[\mu](\br) + \int_{\partial \Omega} 
        B(\br,\br')\eta(\br') \, \dd s(\br') \; , \quad \br \in \Omega \; .$$
        Re-arranging this expression for $\mu$ and applying the regularity results, we see that in fact $\mu \in H^2(\Omega)$.
        We then apply the exterior operator ($\frac12\lp -\beta \Delta + \gamma\rp - \Sthreed$) to $\sigma$ and evaluate inside $\Omega$ to obtain
		\begin{equation*}
			-\frac\beta2\Delta \mu + \frac{|\gamma|}{2} \mu  = 0 \, , \quad \text{ in } \Omega \, .
		\end{equation*} 
		Therefore, $\mu$ satisfies a homogeneous screened Poisson equation inside $\Omega$.
		
		Now, we examine the Neumann and Dirichlet problems separately. For the Neumann problem, the standard jump relations applied to $\sigma$ and its normal derivative give
		\begin{equation*}
			0 = [[\sigma]] = -\gamma_0^- \mu  \, ,  \qquad
			0 = [[\partial_\bn \sigma]] = -\gamma_1^{-}\mu -  \eta \, .
		\end{equation*}
		The first equation implies that $\mu \equiv 0$ by the standard uniqueness result for the screened Poisson equation with Dirichlet boundary conditions, so that the second equation implies that $\eta \equiv 0$.
		For the Dirichlet problem, we observe that $\eta$ is sufficiently regular that the normal derivative of the double layer $\DScgprime$ is well-defined. By Lemma \ref{lem:capjumps}, $\DScgprime$ is continuous across the boundary and therefore we have the following jumps in $\sigma$:
		\begin{equation*}
			0 = [[\sigma]] = -\gamma_0^- \mu +  \eta \, , \qquad 
			0 = [[\partial_\bn \sigma]] = -\gamma_1^{-}\mu \, .
		\end{equation*}
		The second equation implies that $\mu$ is a solution to the screened Poisson equation with homogeneous Neumann data, thus $\mu \equiv 0$. The first equation then implies that $\eta \equiv 0$.
	\end{proof}
    
	\subsection{Exterior flexural-gravity waves}\label{sec:flexuralgravity}
	
	The second example we consider is the scattering of flexural-gravity waves by an opening in an infinite plate overlying water. In the case when the plate represents an ice sheet, such an opening is known as a polynya \cite{bennetts2010wave}. In this case, the exterior part of the ice sheet supports flexural-gravity waves while its ice-free interior supports ordinary gravity waves:
    \small
	\begin{equation}
		\begin{cases}
			\begin{aligned}
				\Aext [\phi , \partial_z \phi] &:= (\alpha \Delta^2 + \gamma)\partial_z \phi - \phi = 0 \, , && \text{ in } D \setminus \overline{\Omega} \, , \\
				\Aint [\phi , \partial_z \phi] &:= \partial_z \phi - \phi = f \, , && \text{ in } \Omega \, , \\
				\Bop[\phi , \partial_z \phi] &:= \gamma_0^+ \begin{bmatrix}  \displaystyle \nu \Delta  + (1-\nu) {\partial_\bn^2 }  \\ {\partial_\bn^3 } + (2 -\nu) {\partial_\bn \partial_\btau^2 } + (1-\nu) \kappa \left( {\partial_\btau^2 }  - {\partial_\bn^2 } \right) \end{bmatrix} \partial_z \phi = g \, ,  && \text{ on } \partial\Omega \, ,
			\end{aligned}
		\end{cases} \label{eq:extflexbvp}
	\end{equation}
    \normalsize
	where $\alpha > 0$ is related to the flexural rigidity of
        the ice, $\nu$ is Poisson's ratio of ice, $\kappa$ is the signed curvature on the boundary, 
	and $\Bop$ is the standard free plate boundary condition \cite{timoshenko1959theory, landau59}. The first component of $\Bop$ corresponds to a prescribed bending moment at the boundary, while the second component corresponds to a prescribed shear force. For the sake of brevity, we present the integral representation for the case of a simply-connected domain $\Omega$, the multiply-connected case can be treated similarly~\cite{nekrasov2025boundary}.

        Again, setting $\phi = \Sthreed[\sigma]$, this system can be written in terms of $\sigma$ as:
	\begin{equation}
		\begin{cases}
			\begin{aligned}
				\Lext[\sigma] &= \frac{1}{2} (\alpha \Delta^2 +\gamma ) \sigma - \Sthreed[\sigma] = 0   \, , && \text{ in } D \setminus \overline{\Omega} \, ,  \\
				\Lint[\sigma] &= \frac{1}{2}\sigma - \Sthreed[\sigma] = f \, , && \text{ in } \Omega \, , \\
				\mathcal{M}[\sigma] &= \frac12 \gamma_0^+
				\begin{bmatrix}  \displaystyle \nu \Delta  + (1-\nu) {\partial_\bn^2 }  \\ {\partial_\bn^3 } + (2 -\nu) {\partial_\bn \partial_\btau^2 } + (1-\nu) \kappa \left( {\partial_\btau^2 }  - {\partial_\bn^2 } \right) \end{bmatrix} \sigma = g \, , && \text{ on } \partial \Omega \, . \\
			\end{aligned}
		\end{cases} \label{eq:flexuralintegrodiff}
	\end{equation}
	The operator $\Lext$ is then in the general form \cref{eq:Lsigma}, where $p(z) = \alpha z^2 + \gamma$ and $d_P = 5$. The Green's function, $\GS$, corresponding to the exterior operator $\Lext$ is a relatively smooth perturbation of the biharmonic Green's function, i.e. $\GS - 2/\alpha \GB$ has continuous derivatives and its fourth derivatives are $\log$ singular (see \ref{lem:smallr}). The immediate consequence is that the jump properties of these boundary layer potentials for $\GS$ are analogous to the jump properties of the biharmonic Green's function derived in \cite{nekrasov2025boundary}. We briefly review these properties:
	
	\begin{lemma}\label{lem:flexjumps}
		Let $\SSi$ and $\DS$ be defined as in \cref{eq:SSdef,eq:DSdef}, but with $\GS$ corresponding to the exterior flexural gravity wave Green's function. We also define  $\TS$ to be the following boundary operator:
		\begin{equation*}
			\TS[\eta](\br) = \int_{\partial\Omega} \partial_{\btau'} \GS(\br,\brprime) \eta(\brprime) \, \dd s(\brprime) \, .
		\end{equation*}
		For $\eta \in L^2(\partial \Omega)$, these operators and their normal derivatives are continuous across the boundary for almost every $\br_0 \in \partial \Omega$:
		\begin{align*} 
			\gamma_i^\pm ( \SSi[\eta] )(\br_0) &= \int_{\partial \Omega} \partial^i_\bn  \GS (\br,\brprime) \eta(\brprime) \, \dd s(\brprime)    \, , \quad \text{ for } i \in \{ 0,1,2 \} \, , \\
			\gamma_i^\pm ( \DS[\eta] )(\br_0) &=  \int_{\partial \Omega} \partial^i_\bn \partial_{\bn'} \GS(\br,\brprime)  \eta(\brprime) \, \dd s(\brprime)   \, ,  \quad \text{ for } i \in \{ 0,1 \} \, , \\
			\gamma_i^\pm ( \TS[\eta] )(\br_0) &=  \int_{\partial \Omega} \partial_{\bn}^i \partial_{\btau'}  \GS(\br,\brprime)  \eta(\brprime) \, \dd s(\brprime)   \, ,   \quad \text{ for } i \in \{ 0,1,2 \} \, , 
		\end{align*}
		where the integrals above are interpreted in a principal value sense whenever there are three derivatives acting on the Green's function.
		Moreover, we have the following jumps for higher derivatives of these same potentials for almost every $\br_0 \in \partial \Omega$:
		\begin{align*}
			\gamma_3^\pm(\SSi[\eta])(\br_0) &= \pm \frac{1}{\alpha} \eta(\br_0) + \pv \int_{\partial\Omega} \partial^3_\bn  \GS (\br,\brprime) \eta(\brprime) \, \dd s(\brprime)    \, , \\ 
			\gamma_2^\pm ( \DS[\eta] )(\br_0) &= \mp \frac{1}{\alpha} \eta(\br_0) + \pv \int_{\partial\Omega} \partial^2_\bn \partial_{\bn'} \GS(\br,\brprime)  \eta(\brprime) \, \dd s(\brprime) \, .
		\end{align*}
	\end{lemma}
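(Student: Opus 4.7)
The strategy is to reduce each statement to the already-established biharmonic case via the decomposition in \Cref{lem:smallr}. With $p(z) = \alpha z^2 + \gamma$ we have $d_P = 5$, and the lemma provides $\frac{\alpha}{2}\GS - \GB \in H^4_{\rm loc}(\bbR^2)$. Writing
\begin{equation*}
\GS(\br,\br') = \tfrac{2}{\alpha}\,\GB(\br,\br') + R(\br,\br'),
\end{equation*}
with $R$ translation-invariant and $H^4_{\rm loc}$ (so in particular with continuous third derivatives), I would decompose each of $\SSi$, $\DS$, and $\TS$ correspondingly into a ``biharmonic piece'' plus a ``smooth remainder piece.''

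The smooth remainder contributes no jumps. Since $R$ and its derivatives of order up to three are continuous functions, the integral operators built from those derivatives of $R$ acting on $L^2(\partial\Omega)$ map into functions whose traces on $\partial \Omega$ taken from either side agree. Hence the $R$ part supplies only the classical (or principal value) portions of every identity in the lemma.

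The biharmonic piece is where all the jumps originate. The jump relations for layer potentials of $\GB$ on a smooth closed curve are standard and, in the exact conventions needed here, are derived in \cite{nekrasov2025boundary}: the first three normal-derivative traces of the biharmonic single layer are continuous, with a $\pm \frac12$ jump at the third normal derivative; and the value and first normal derivative of the biharmonic double layer are continuous, with a $\mp \frac12$ jump at the second normal derivative. Multiplying by the scaling factor $\frac{2}{\alpha}$ from the decomposition converts these into the $\pm \frac{1}{\alpha}$ and $\mp \frac{1}{\alpha}$ jumps stated in the lemma, and transfers the continuity properties unchanged. For $\TS$, whose kernel involves a tangential derivative $\partial_{\btau'}$ in the source variable, one may formally write $\TS[\eta] = -\SSi[\partial_\btau \eta]$ by integration by parts along the closed curve $\partial \Omega$; tangential differentiation is transparent to the normal-direction jump analysis, so the first two normal-derivative traces of $\TS[\eta]$ are continuous in the same way as those of $\SSi[\eta]$.

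The principal obstacle is not analytic content but careful bookkeeping of signs: identifying the orientation of $\bn$, distinguishing $\gamma^+$ from $\gamma^-$, and ensuring that the scaling $\frac{2}{\alpha}$ combines with the $\pm \frac12$ biharmonic jumps to produce the correct sides. A useful consistency check is the parallel calculation in the capillary--gravity setting of \Cref{lem:capjumps}, where the same mechanism (scaling by $-\frac{2}{\beta}$ of the Laplace single-layer jump $\mp \frac12$) yields the stated $\pm \frac{1}{\beta}$ jumps. The only other small point of care is that the principal value interpretations at the third-order normal derivative of $\SSi$ (and third-order tangential/normal derivatives implicit in the biharmonic double layer) are inherited from the classical Cauchy-type theory on smooth curves, applied to the $\GB$ part.
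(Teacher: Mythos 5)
Your proposal is correct and follows essentially the same route as the paper: split $\GS$ via \Cref{lem:smallr} into $\tfrac{2}{\alpha}\GB$ plus an $H^4_{\rm loc}$ remainder whose layer potentials have no jumps, and import the continuity and $\pm\tfrac{1}{\alpha}$ jump relations from the biharmonic results of \cite{nekrasov2025boundary}. The only cosmetic difference is that you justify the $\TS$ traces by a formal integration by parts (which strictly needs $\eta\in H^1$), whereas the paper, like the rest of your argument, simply cites the biharmonic jump relations for the mixed tangential/normal derivative kernels.
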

	\begin{proof}
		By \Cref{lem:smallr}, $\GS-2/\alpha \GB$ has three continuous derivatives. The continuity of layer potentials with at most two derivatives follows as a direct consequence. For higher derivatives, the result holds by 
		the jump relations for the biharmonic Green's function derived in Appendix D of~\cite{nekrasov2025boundary}.
	\end{proof}

        As observed at the beginning of this section, the appropriate boundary layer kernel, $B$, can be determined by considering the PDE $\alpha \Delta^2+\gamma=0$ with free plate boundary conditions. Here, our choice of kernel $B$ is taken, by analogy, from~\cite{nekrasov2025boundary}. Meanwhile, the kernel $V(\br,\brprime)$ is constructed in a similar manner to \Cref{sec:capillarygravity}. In particular, we set
	\begin{align}
		V(\br,\brprime) &=  \frac{\alpha}{2} \Delta^2_{\brprime} \GSfg(\br,\brprime)  \, , \label{eq:flexuralkernel1} \\
		B(\br,\brprime) &= \begin{bmatrix}
			\partial_{\bn'} \GS(\br,\brprime) + \lambda \, (\partial_{\btau'} \GS (\br,\brprime) \star K_{\mathcal{H}} (\br,\brprime) )  (\br,\brprime) \\ 
			\GS(\br,\brprime)
		\end{bmatrix} \, , \label{eq:flexuralkernel2}
	\end{align}
	where $\lambda = (1+\nu)/2$, $\GS$ is the Green's function corresponding to the exterior operator $\Lext$, $K_{\mathcal{H}}$ is the kernel of the Hilbert transform, which is given by
	\begin{equation*}
		K_{\mathcal{H}} := \frac{1}{\pi} \frac{(\br - \brprime) \cdot \btau(\brprime)}{|\br-\brprime|^2} \, ,
	\end{equation*}
	and the $(\star)$ symbol denotes the convolution operation on the boundary, defined by
	\begin{equation*}
		(L \star K)(\br,\brprime) = \int_{\partial\Omega} L(\br,\brprimeprime) K(\brprimeprime,\brprime) \, \dd s(\brprimeprime) \, .
	\end{equation*}
	It follows immediately from the definition of $\GS$ that
	\begin{equation*}
		V(\br,\brprime) = \delta(\br,\brprime) - \frac{\gamma}{2} \GS(\br,\brprime) + \Gphi(\br,\brprime)
	\end{equation*}
        
	As before, the jump properties of the boundary operators can be
        determined from the corresponding properties for the analogous
        PDE kernels:
	\begin{lemma}
		\label{lem:freejumps}
		Let $\Omega$ be a simply-connected domain, let $\eta_1,\eta_2 \in L^2(\partial \Omega)$, and let $\Kop_1[\eta_1] = \DS [\eta_1] + \lambda \TS \mathcal{H}[\eta_1]$, where $\lambda = (1+\nu)/2$, and $\Kop_2[\eta_2] = \SSi[\eta_2]$ be defined as the boundary integral operators corresponding to $B_1(\br,\brprime)$ and $B_2(\br,\brprime)$, respectively. The exterior limits of the boundary conditions applied to these layer potentials can be written, for $\br_0\in\partial \Omega$, as
        \small
		\begin{equation*}
			\gamma_0^+(\mathcal{M}[\Kop_1[\eta_1] \ \Kop_2[\eta_2]] )(\br_0) = \begin{bmatrix}
				\displaystyle\frac{-1+\lambda^2}{\alpha} \eta_1(\br_0) + \mathcal{K}_{11}[\eta_1](\br_0) & \Kop_{12}[\eta_2](\br_0) \\
				\mathcal{K}_{21}[\eta_1](\br_0) & \displaystyle\frac{1}{\alpha}\eta_2(\br_0) + \Kop_{22}[\eta_2](\br_0)
			\end{bmatrix} \; ,
		\end{equation*}
        \normalsize
                where the kernels of the integral operators $\Kop_{11}$, $\Kop_{12}$, $\Kop_{21}$, and $\Kop_{22}$ are given by
                \small
		\begin{align*}
			K_{11}(\br,\brprime) &= \frac12 \partial_\bn^2 \partial_{\bn'} \GS(\br,\brprime) + \frac{\lambda}{2} \, ((\partial_\bn^2  \partial_{\btau'} \GS (\br,\brprime) - \frac{2}{\alpha} K_{\mathcal{H}}(\br,\brprime)) \star K_{\mathcal{H}} (\br,\brprime) )  (\br,\brprime) \\ &\quad +\frac{\nu}{2} \partial_\btau^2 \partial_{\bn'} \GS(\br,\brprime) + \frac{\lambda \nu}{2} ( ( \partial_\btau^2 \partial_{\btau'} \GS (\br,\brprime)  - \frac{2}{\alpha} K_{\mathcal{H}}(\br,\brprime) ) \star K_{\mathcal{H}} (\br,\brprime) )  (\br,\brprime)
			\\
			&\quad - \frac{4\lambda^2}{\alpha} (K_\Dop(\br,\brprime) \star K_\Dop(\br,\brprime))(\br,\brprime) \; , \\
			K_{12}(\br,\brprime) &= \frac{1}{2} \partial^2_\bn \GS(\br,\brprime) + \frac{\nu}{2} \partial^2_\btau \GS(\br,\brprime) \; ,\\
			K_{21}(\br,\brprime) &= \frac12 {\partial_\bn^3 } \partial_{\bn'} \GS(\br,\brprime) + \frac{2 -\nu}{2} {\partial_\bn \partial_\btau^2 } \partial_{\bn'} \GS(\br,\brprime) + \frac{1-\nu}{2} \kappa \,  {\partial_\btau^2 } \partial_{\bn'} \GS(\br,\brprime)  \\
			&\quad - \frac{1-\nu}{2} \kappa \, {\partial_\bn^2 } \partial_{\bn'} \GS(\br,\brprime)  + \frac{\lambda}{2}  ({\partial_\bn^3 } \partial_{\btau'} \GS (\br,\brprime) \star K_{\mathcal{H}} (\br,\brprime) )  (\br,\brprime) \\
			&\quad + \frac{2 -\nu}{2}  \lambda \, ({\partial_\bn \partial_\btau^2 } \partial_{\btau'} \GS (\br,\brprime) \star K_{\mathcal{H}} (\br,\brprime) )  (\br,\brprime) \\
			&\quad + \frac{1-\nu}{2} \lambda \kappa \left( {\partial_\btau^2 }  - {\partial_\bn^2 } \right)  \, (\partial_{\btau'} \GS (\br,\brprime) \star K_{\mathcal{H}} (\br,\brprime) )  (\br,\brprime) - \frac{\lambda}{\alpha} K_{\mathcal{H}'}(\br,\brprime) \; , \\
			K_{22}(\br,\brprime) &= \frac12 {\partial_\bn^3 } \GS(\br,\brprime) + \frac{2 -\nu}{2} {\partial_\bn \partial_\btau^2 } \GS(\br,\brprime) + \frac{1-\nu}{2} \kappa \left( {\partial_\btau^2 }  - {\partial_\bn^2 } \right) \GS(\br,\brprime) \; ,
		\end{align*}
        \normalsize
		and $K_\Dop$ is the kernel of the Laplace double layer potential in two dimensions, i.e.
		\begin{equation*}
			K_\Dop(\br,\brprime) = \frac{(\br -\brprime ) \cdot  \bn (\brprime) }{{2\pi} |\br -\brprime |^2} \, .
		\end{equation*}
	\end{lemma}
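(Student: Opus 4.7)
The plan is to reduce the computation to the known jump relations for the biharmonic Green's function together with a careful treatment of the Hilbert transform composed with the boundary layer potentials. By \Cref{lem:smallr}, one may decompose $\GS = (2/\alpha)\GB + R$, where $R$ lies in $H^4_{\rm loc}(\bbR^2)$ and, in particular, has three continuous classical derivatives. The contribution of $R$ to each entry of $\mathcal{M}[\Kop_1,\Kop_2]$ produces a smooth remainder with no identity term and can be absorbed into the principal value integrals, so all nontrivial jumps arise from the biharmonic part $(2/\alpha)\GB$. For these, I would invoke the jump relations for the biharmonic Green's function derived in Appendix~D of~\cite{nekrasov2025boundary}, equivalently summarized by \Cref{lem:flexjumps}.

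The next step is to apply each component of $\mathcal{M}$ to the appropriate layer potential and take the exterior trace term by term. For $\mathcal{M}\SSi[\eta_2]$, only the third normal derivative in the shear-force row contributes an identity term, yielding the stated $(1/\alpha)\eta_2$ in the $(2,2)$ entry together with the principal-value kernels $K_{12}$ and $K_{22}$; all lower-order derivatives and all tangential derivatives of $\SSi[\eta_2]$ are continuous across $\partial\Omega$ by \Cref{lem:flexjumps}. For $\mathcal{M}(\DS + \lambda\TS\mathcal{H})[\eta_1]$, the jump of $\partial_\bn^2\DS[\eta_1]$ supplies the $\DS$ portion of the $(1,1)$ identity, while the third-derivative pieces furnish the remaining jumps in the shear-force row.

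The main technical obstacle is the treatment of the term $\lambda\TS\mathcal{H}[\eta_1]$. When the second-order operators in $\mathcal{M}_1$ act on it, \Cref{lem:flexjumps} provides continuity across $\partial\Omega$ for the $\partial_\bn^2\partial_{\btau'}\GS$ composition with $\mathcal{H}$. However, the resulting singular principal-value integrals must be reduced using the identity $\mathcal{H}^2=-I$ on $L^2(\partial\Omega)$, which is valid because $\Omega$ is simply connected. This reduction is precisely what generates the $-(2/\alpha)K_{\mathcal{H}}$ correction terms inside the convolutions of $K_{11}$ and, after accounting for the cancellation of the $\DS$ jump with the $\lambda^2$ contribution from $\mathcal{H}^2$, the net $(-1+\lambda^2)/\alpha$ identity coefficient. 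The $-(4\lambda^2/\alpha)(K_\Dop\star K_\Dop)$ correction in $K_{11}$ and the $-(\lambda/\alpha)K_{\mathcal{H}'}$ correction in $K_{21}$ arise by an analogous mechanism applied to the third-derivative pieces, together with integration by parts on the closed curve $\partial\Omega$ to move tangential derivatives onto the Hilbert kernel. The overall bookkeeping closely parallels the pure-biharmonic free-plate analysis in~\cite{nekrasov2025boundary}, and the only new input here is the verification that the smooth remainder $R$ produces no identity contribution, which is immediate from its $C^3$ regularity.
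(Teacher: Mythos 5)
Your proposal takes essentially the same route as the paper: decompose $\GS$ as $(2/\alpha)\GB$ plus a regular remainder via \Cref{lem:smallr}, note the remainder contributes no jumps, and delegate the identity terms and the Hilbert-transform bookkeeping to the biharmonic free-plate jump relations of \cite{nekrasov2025boundary} (the paper's accompanying remark attributes the $K_\Dop\star K_\Dop$ and $K_{\mathcal{H}'}$ corrections to the generalized Poincar\'e--Bertrand formula, which is the rigorous version of your $\mathcal{H}^2=-I$ heuristic). Two minor caveats that do not affect the argument: the remainder's fourth derivatives are only log-singular rather than smooth (continuity of the corresponding potentials, not smoothness, is what kills the identity terms), and on a general smooth closed curve $\mathcal{H}^2=-I$ holds only modulo a smoothing operator.
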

	\begin{proof}
		By \Cref{lem:smallr}, $\GS-2/\alpha \GB$ has three continuous derivatives. The result then holds by 
		the jump relations and limits for the biharmonic Green's function derived in~\cite{nekrasov2025boundary}.
	\end{proof}

        \begin{remark}
	  The kernel of the Laplace double layer $K_\Dop$ appears by subtracting a multiple of the Hilbert transform kernel from $\partial_\bn^2 \partial_{\btau'}\GS$ and $\partial_\btau^2 \partial_{\btau'}\GS$ and applying the generalized Poincar\'e-Bertrand formula \cite{muskhelishvilisingularbook,shidong}.
        \end{remark}

        \Cref{lem:freejumps} provides the lower right block of the system of integral equations \eqref{eq:vie}-\eqref{eq:bie}. The rest of the blocks can be obtained by applying the interior operator $\Lint$ to $V$ and $B$ and boundary operator $\Mop$ to $V$:
	\begin{align*}
		K_{00}(\br,\brprime) &= - \frac{\gamma}{4} \GS(\br,\brprime) + \frac{1}{2} \Gphi(\br,\brprime) - \frac{\alpha}{2} \Delta^2_{\brprime} \Gphi(\br,\brprime) \\ 
		K_{01}(\br,\brprime) &=  \frac12 \partial_{\bn'} \GS(\br,\brprime) - \partial_{\bn'} \Gphi(\br,\brprime) + \frac{\lambda}{2} \, (\partial_{\btau'} \GS (\br,\brprime) \star K_{\mathcal{H}} (\br,\brprime) )  (\br,\brprime) \\
		&\quad - \lambda \, (\partial_{\btau'} \Gphi (\br,\brprime) \star K_{\mathcal{H}} (\br,\brprime) )  (\br,\brprime) \\
		K_{02}(\br,\brprime) &= \frac{1}{2} \GS(\br,\brprime) - \Gphi(\br,\brprime) \\
		K_{10}(\br,\brprime) &= - \frac{\gamma}{4}  \partial_\nr^2 \GS(\br,\brprime) + \frac12 \partial_\nr^2 \Gphi(\br,\brprime)  - \frac{\gamma\nu}{4}  \partial_\taur^2 \GS(\br,\brprime) + \frac{\nu}{2} \partial_\taur^2 \Gphi(\br,\brprime) \\
		K_{20}(\br,\brprime) &=  -\frac{\gamma}{4} {\partial_\bn^3 } \GS(\br,\brprime) - \frac{2 -\nu}{4} \gamma {\partial_\bn \partial_\btau^2 }  \GS(\br,\brprime) -  \frac{1-\nu}{4} \gamma \kappa \left( {\partial_\btau^2 }  - {\partial_\bn^2 } \right)  \GS(\br,\brprime)    \\
		& \quad  + \frac12 {\partial_\bn^3 }   \Gphi(\br,\brprime) +\frac{2 -\nu}{2} {\partial_\bn \partial_\btau^2 }   \Gphi(\br,\brprime) + \frac{1-\nu}{2} \kappa \left( {\partial_\btau^2 }  - {\partial_\bn^2 } \right)   \Gphi(\br,\brprime)
	\end{align*}
	where the identity term $T_{0} = \frac{1}{2}$. Using \Cref{lem:freejumps}, the system of integral equations can be summarized as follows:
	\begin{align}
		\begin{pmatrix}
			\frac12 I & 0 & 0 \\
			0 & \frac{-1+\lambda^2}{\alpha} I & 0 \\
			0 & 0 & \frac{1}{\alpha}I
		\end{pmatrix}  \begin{pmatrix}
			\mu \\ \eta_1 \\ \eta_2
		\end{pmatrix} +  \begin{pmatrix}
			\mathcal{K}_{00} & \mathcal{K}_{01} & \mathcal{K}_{02}  \\
			\mathcal{K}_{10} & \mathcal{K}_{11} & \mathcal{K}_{12} \\
			\mathcal{K}_{20} & \mathcal{K}_{21} & \mathcal{K}_{22} \\
		\end{pmatrix} \begin{pmatrix}
			\mu \\ \eta_1 \\ \eta_2
		\end{pmatrix} = \begin{pmatrix}
			f \\ g_1 \\ g_2
		\end{pmatrix} \, , \label{eq:flexIEs}
	\end{align}

	\begin{theorem} \label{thm:flexfredholm}
		The system of integral equations given by \eqref{eq:flexIEs} is Fredholm second kind on $L^2(\Omega) \times (L^2(\partial \Omega))^2$. 
	\end{theorem}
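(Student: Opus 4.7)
The plan is to follow the strategy of \Cref{thm:capfredholm}, adapting it to the fourth-order operator $\Aext$ and the vector-valued boundary condition. The diagonal matrix of identity terms in \eqref{eq:flexIEs} has entries $1/2$, $(-1+\lambda^2)/\alpha$, and $1/\alpha$; each is nonzero, since $\lambda=(1+\nu)/2$ with Poisson's ratio $\nu\in(-1,1)$ forces $\lambda^2-1<0$. After multiplying by its inverse, the system takes the form $I+\widetilde{\mathcal{K}}$, so Fredholm second kind reduces to showing that each block $\mathcal{K}_{ij}$ is compact on the appropriate $L^2$ space.

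For $\mathcal{K}_{00}$ and the blocks mixing volume and boundary spaces ($\mathcal{K}_{01}$, $\mathcal{K}_{02}$, $\mathcal{K}_{10}$, $\mathcal{K}_{20}$), the argument proceeds exactly as in \Cref{thm:capfredholm}: \Cref{lem:genregvolume,lem:genregbdry} provide regularity gains for operators built from $\GS$ and $\Gphi$ (with $d_P=5$ here), the trace theorem transfers this to boundary Sobolev spaces where needed, and Rellich's lemma delivers compactness. The one subtle term is $\Delta^2_{\brprime}\Gphi$ in the definition of $K_{00}$; by \Cref{lem:smallr}, the nonsmooth part of $\Gphi$ is $|\br|^3 B_\phi(|\br|^2)+|\br|^6\log|\br|\,C_\phi(|\br|^2)$, so applying four derivatives leaves at most an integrable $|\br|^{-1}$-type singularity, and the resulting weakly singular volume operator is compact on $L^2(\Omega)$.

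The main work lies in the $2\times2$ boundary-to-boundary sub-block. \Cref{lem:freejumps} has already extracted the identity jumps from the hypersingular pieces $\partial_\bn^3\GS$ and $\partial_\bn^2\partial_{\bn'}\GS$, leaving principal value operators with kernels $K_{ij}$, $i,j\in\{1,2\}$. My plan is to decompose each as the corresponding boundary integral operator for the biharmonic PDE — compact on $L^2(\partial\Omega)$ for smooth $\partial\Omega$ by~\cite{nekrasov2025boundary} — plus a remainder built from $\GS-(2/\alpha)G_B$. By \Cref{lem:smallr} this remainder has three continuous derivatives, so the correction terms (involving at most three boundary derivatives of the remainder) have only weakly singular kernels and are compact on $L^2(\partial\Omega)$ by \Cref{lem:gencompact}. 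The hard part will be the Hilbert transform convolutions in $K_{11}$ and $K_{21}$: composing two Cauchy-type principal value kernels does not in general yield a compact operator. As noted in the remark after \Cref{lem:freejumps}, this is resolved by the generalized Poincar\'e--Bertrand formula, which reduces these compositions to expressions involving the Laplace double-layer kernel $K_\Dop$, whose restriction to a smooth curve is smooth, so the resulting contributions are again compact by \Cref{lem:gencompact}.
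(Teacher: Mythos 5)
Your proposal is correct and follows essentially the same route as the paper, whose proof of this theorem is simply a deferral to the ``standard arguments'' of \Cref{thm:capfredholm}: nonzero block-diagonal identity terms plus compactness of each $\mathcal{K}_{ij}$ via \Cref{lem:genregvolume,lem:genregbdry}, the trace theorem, Rellich's lemma, and treating the boundary-to-boundary block as a compact perturbation of the corresponding (biharmonic) PDE layer operators, with the Hilbert-transform compositions handled by the Poincar\'e--Bertrand cancellations detailed in \cite{nekrasov2025boundary}. Your write-up just makes these deferred details explicit (including the check $\lambda^2-1\neq 0$ and the weak singularity of $\Delta^2_{\brprime}\Gphi$), so there is nothing substantively different to flag; note only that $K_{21}$ and $K_{11}$ involve up to four derivatives of $\GS-\tfrac{2}{\alpha}\GB$, whose log-singular fourth derivatives are still covered by \Cref{lem:gencompact}.
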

	The proof follows from relatively standard arguments. See \Cref{thm:capfredholm} for more details.
	The following result establishes the invertibility of our integral equation:
	\begin{theorem} \label{thm:flexuniqueness}
	  Suppose that the coefficients are in the dissipative regime and
          that solutions of the integro-differential boundary value
          problem \cref{eq:flexuralintegrodiff}, supplemented with the
          decay condition $\sigma(\br) = \mathcal{O}(1 / | \br |^3 )$ as
          $|\br| \to \infty$, are unique. Then, the system of integral
          equations given by \cref{eq:flexIEs} is invertible on $L^2(\Omega) \times L^2(\partial \Omega)^2$.
	\end{theorem}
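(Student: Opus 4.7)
The proof closely parallels that of \Cref{thm:capuniqueness}. By the Fredholm alternative, which applies thanks to \Cref{thm:flexfredholm}, it suffices to show that any triple $(\mu,\eta_1,\eta_2)\in L^2(\Omega)\times L^2(\partial\Omega)^2$ solving the homogeneous version of \cref{eq:flexIEs} vanishes identically.

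The plan is to first bootstrap the regularity of the densities: starting from the $L^2$ assumption and iteratively applying the smoothing estimates of \Cref{lem:genregvolume,lem:genregbdry}, together with the trace theorem and the jump relations in \Cref{lem:flexjumps}, I would show that $\mu$ lies in $H^2(\Omega)$ and $\eta_1,\eta_2$ lie in Sobolev spaces of high enough order for the second and third normal-derivative traces of $\sigma$ to make sense. I then construct $\sigma$ from the ansatz \cref{eq:ansatz} with the kernels \cref{eq:flexuralkernel1,eq:flexuralkernel2}. By construction $\sigma$ satisfies $\Lint[\sigma]=0$ in $\Omega$ and $\Mop[\sigma]=0$ on $\partial\Omega$, and by \Cref{cor:actualsolution} it satisfies $\Lext[\sigma]=0$ in $D\setminus\overline{\Omega}$. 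The hypothesized uniqueness for the integro-differential boundary value problem \cref{eq:flexuralintegrodiff} then forces $\sigma\equiv 0$ on $\bbR^2$.

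Next I would derive the PDE satisfied by $\mu$ inside $\Omega$ by applying $\Lext=\tfrac12(\alpha\Delta^2+\gamma)-\Sthreed$ to the interior representation $\sigma|_\Omega = \mu - \tfrac{\gamma}{2}\VS[\mu] + \Vphi[\mu] + \DS[\eta_1] + \lambda\TS\mathcal{H}[\eta_1] + \SSi[\eta_2]$. The key identities are that $\Lext[\VS[\mu]]=\mu$ in $\Omega$ (because $\GS$ is the Green's function of $\Lext$), and that $\Vphi[\mu]=\Sthreed[\VS[\mu]]$ by \Cref{lem:sgsgphi}, so that $\Lext[\Vphi[\mu]] = \Sthreed[\Lext[\VS[\mu]]]$ exactly cancels the nonlocal $\Sthreed[\mu]$ produced by $\Lext[\mu]$; the boundary-layer contributions vanish under $\Lext$ away from $\partial\Omega$ by \Cref{cor:actualsolution}. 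A short computation then yields
\begin{equation*}
\frac{\alpha}{2}\Delta^2 \mu = 0 \quad\text{in } \Omega.
\end{equation*}
Since $\sigma\equiv 0$ and the boundary-layer operators $\DS$, $\TS$, and $\SSi$ all have continuous $\gamma_0$ and $\gamma_1$ traces by \Cref{lem:flexjumps}, the jump calculus gives $[[\gamma_0 \sigma]] = -\gamma_0^-\mu=0$ and $[[\gamma_1\sigma]] = -\gamma_1^-\mu=0$, so that $\mu$ satisfies the biharmonic equation with clamped homogeneous data. Multiplying by $\mu$ and integrating by parts makes the boundary contributions vanish and forces $\Delta\mu\equiv 0$; Dirichlet uniqueness for the Laplacian then yields $\mu\equiv 0$.

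With $\mu\equiv 0$ the representation reduces to $\sigma = \DS[\eta_1]+\lambda\TS\mathcal{H}[\eta_1]+\SSi[\eta_2]\equiv 0$. Reading off $[[\gamma_2\sigma]]$ from \Cref{lem:flexjumps}, only $\DS$ contributes a jump at this order (the $\gamma_2$ traces of $\TS$ and $\SSi$ are continuous), giving $-\tfrac{2}{\alpha}\eta_1=0$ and hence $\eta_1\equiv 0$. Then $\sigma=\SSi[\eta_2]\equiv 0$, and the $\gamma_3$ jump of $\SSi$ from \Cref{lem:flexjumps} gives $\tfrac{2}{\alpha}\eta_2=0$, so $\eta_2\equiv 0$. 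The main technical obstacle is the regularity bootstrap, which must be pushed far enough that the $\gamma_2$ and $\gamma_3$ traces of every term in $\sigma$ are well-defined and that the Hilbert-transform convolution $\lambda\TS\mathcal{H}[\eta_1]$ introduces no unexpected jump at orders $0$, $1$, or $2$; both concerns are resolved by the continuity of $\TS$ traces up to order two in \Cref{lem:flexjumps} combined with the Sobolev smoothing in \Cref{lem:genregvolume,lem:genregbdry}.
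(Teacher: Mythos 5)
Your proposal is correct and follows essentially the same route as the paper's proof: Fredholm alternative via \Cref{thm:flexfredholm}, a regularity bootstrap, $\sigma\equiv 0$ from the assumed uniqueness of \cref{eq:flexuralintegrodiff}, the cancellation yielding $\frac{\alpha}{2}\Delta^2\mu=0$ in $\Omega$ with clamped data from the zeroth- and first-order jumps, and then $\eta_1,\eta_2\equiv 0$ from the $\gamma_2$ and $\gamma_3$ jumps in \Cref{lem:flexjumps}. The only cosmetic differences are that you prove the clamped-plate uniqueness by an energy argument where the paper cites it as standard, and the paper secures the needed regularity of $\mu$ (up to $H^4(\Omega)$) by rearranging the identity $\sigma\equiv 0$ rather than directly from the integral equation, a detail you flag but do not carry out.
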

	\begin{proof}
		
	  The proof is similar to that of \Cref{thm:capuniqueness}.
          By the Fredholm alternative and \Cref{thm:flexfredholm}, it is sufficient to show the uniqueness of solutions of the 
    integral equation system.
	  Suppose that $\mu \in L^2(\Omega)$ and $\eta_1,\eta_2 \in L^2(\partial\Omega)$ solve the homogeneous version of \cref{eq:flexIEs}. Bootstrapping as in the proof
          of \cref{thm:capuniqueness}, we obtain that $\mu \in H^1(\Omega) $, $\eta_1 \in H^{3/2}(\partial \Omega)$, and
          $\eta_2 \in H^{1/2}(\partial \Omega)$.
          Let $\sigma$ be the corresponding surface density defined by
          the ansatz~\cref{eq:ansatz}, which satisfies the interior
          equation and boundary condition in \cref{eq:flexuralintegrodiff} by construction and the exterior equation in \cref{eq:flexuralintegrodiff} by \cref{cor:actualsolution}.
          Uniqueness of the integro-differential equation implies that $\sigma \equiv  0$.  

            Recalling the representation of $\sigma$, we have that 
            $$ 0 = \sigma(\br) = \mu(\br) - \frac{\gamma}{2}\VS[\mu](\br) + \Vphi[\mu](\br) + \sum_{j=1}^2 \int_{\partial \Omega} B_j(\br,\br') \eta_j(\br') \, \dd s(\br') \; , \quad \br \in \Omega \; .$$
            Re-arranging this equation for $\mu$ and applying the regularity results, we obtain $\mu\in H^4(\Omega)$.
          We then apply the exterior operator ($\frac12\lp \alpha \Delta^2 + \gamma\rp - \Sthreed$) to $\sigma$ and evaluate inside
          $\Omega$ to obtain
		\begin{equation*}
			\frac{\alpha}{2}\Delta^2 \mu = 0 \, , \quad  \text{ in } \Omega \, .
		\end{equation*} Lemma \ref{lem:flexjumps} implies the following jumps in $\sigma$ and its normal derivative across the boundary:
		\begin{equation*}
			0 = [[\sigma]] = - \gamma_0^-\mu  \, , \qquad
			0 = [[\partial_\bn \sigma]] = - \gamma_1^-\mu \, . 
		\end{equation*}
		Therefore, $\mu \equiv 0$ by the standard uniqueness theorem for the interior biharmonic equation with clamped boundary conditions. To show that the boundary densities $\eta_1$ and $\eta_2$ are zero, we must also look at higher order jumps in the solution. By \Cref{lem:flexjumps}, the second normal derivative has the following jump across the boundary:
		\begin{align*}
			0 = [[\partial_\bn^2 \sigma]] &= - \frac{2}{\alpha} \eta_1 - \gamma_2^-\mu  \, ,
		\end{align*}
		which implies that $\eta_1 \equiv 0$. Similarly, the jump $[[\partial_\bn^3 \sigma]]=0$ implies that $\eta_2 \equiv 0$.
	\end{proof}

        \begin{remark}
          The bootstrapping argument mentioned in the proof above
          requires certain regularizing properties of the integral
          operators $\mathcal{K}_{ij}$ in \cref{eq:flexIEs}. These
          can be established using results for more standard integral
          kernels and the explicit cancellations between terms that
          are detailed in \cite{nekrasov2025boundary}.
        \end{remark}
    
	\section{Numerical implementation and scalability}

    While the kernels that appear in the integral equations above 
    are non-standard, there has been significant progress in recent decades in
    {\em kernel independent} methods for the high-order accurate 
    discretization of singular integrals and the fast solution of linear systems
    with certain rank structures. We briefly describe a scalable numerical scheme for solving these integral equations using such methods in the 
    sections below.

    \begin{remark}
        Both for the flexural and capillary problems, the interior normal derivative of the surface-volume density $\mu$ (and of the solution $\partial_z\phi$ itself) has a logarithmic singularity at the boundary. Such behavior is expected due to the presence of $1/r$-type surface-volume terms in our integral equations. In our numerical methods, we partially alleviate this issue by simply refining adaptively near the boundary. In principle, special discretization methods could be developed which capture these singularities with many fewer degrees of freedom. We note that a straightforward consequence of Lemma \ref{lem:genregvolume} is that for smooth data the surface-volume densities are smooth away from the boundary.
    \end{remark}
	\subsection{Discretization of integral operators}
	\label{sec:discretization}
    We numerically solve the integral equations of \Cref{sec:capillarygravity,sec:flexuralgravity} using
    a collocation scheme~\cite{Greengard2021FMM}. The boundary curve $\partial \Omega$ is initially discretized by a 
    16th-order panelization and the interior of $\Omega$ is represented as an 8th-order curved triangular
    mesh, with each panel of the boundary corresponding to an edge of a triangular element. The discretization
    nodes on each triangular element are scaled ``Vioreanu-Rokhlin'' nodes~\cite{vioreanu2014spectra}, which 
    are stable for high-order interpolation in the total degree polynomial basis, and the nodes on the boundary
    are scaled Legendre nodes; see \Cref{fig:cactus} for an illustration. 

            \begin{figure}[h]
    \centering
    \includegraphics[width=\linewidth]{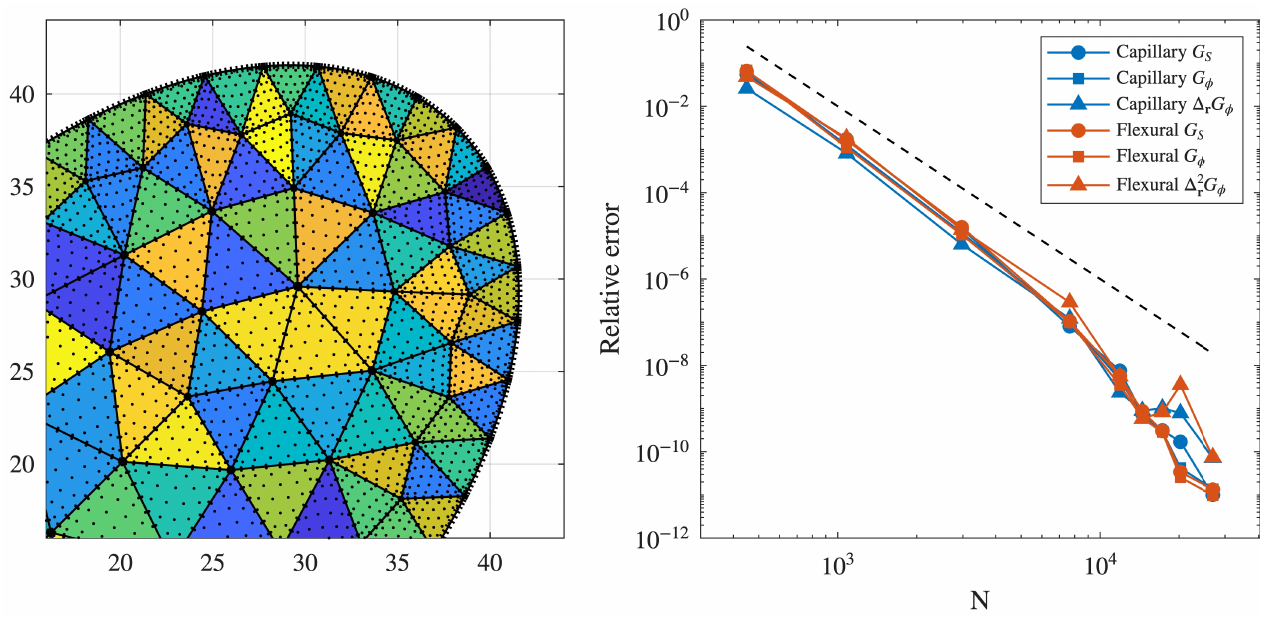}
    \vspace{-0.7cm}
    \caption{Discretization of a geometry (left) and convergence of surface-volume operators (right). The geometry uses 8th-order Vioreanu-Rokhlin nodes inside and 16th-order Gauss-Legendre panels on the boundary. The integral operators were applied to a smooth density on a disk and compared to a reference value for a point in the domain. The dashed line represents 8th-order convergence.}
    \label{fig:cactus}
\end{figure}
    
    To discretize the integral
    operators, which are generally weakly singular, we apply special tables of high-order accurate quadrature 
    rules designed for general kernels with known singularity type for targets interior to a patch, we 
    apply adaptive quadrature for targets on the boundary of a patch or near a patch, and we apply a fixed,
    high-order quadrature rule for well-separated targets.
    Letting $\{\br_i\}_1^N$ be the nodes of the triangular mesh, this quadrature method produces 
    a discretization of the {\em surface-volume to surface-volume} operators of the form 
\begin{equation}
\label{eq:quadsetup}
\int_{\Omega} K(\br_i,\br') \mu(\br') \, \dd A(\br') \approx \sum_{j\in J_i} w_{ij} \mu_j + 
\sum_{j \not\in J_i} w_j K(\br_i,\br_j) \mu_j \; , \quad i=1,\ldots,N \, ,
\end{equation}
where $K$ is some integral kernel and $J_i$ is a set of indices of volume nodes that are sufficiently 
close to $\br_i$ to require special quadrature. The other operators have similar discrete forms.

\begin{remark}
In practice, in packages such as \texttt{fmm3dbie}, the interactions between far away points, i.e. the second sum in \cref{eq:quadsetup}, is done using an upsampled quadrature rule to control the size of $J_i$; see~\cite{Greengard2021FMM}, for a detailed discussion.
\end{remark}

To generate the quadratures for the weakly singular integrals over the triangular mesh, we use
the \texttt{fmm3dbie} package \cite{fmm3dbie,Greengard2021FMM}, which applies the special 
quadrature rules in~\cite{bremer2012nystrom,Bremer2013SingularIntegrals,Xiao2010QuadratureRules} for $1/|\br|$ type singularities,
and a similar procedure for $\log|\br|$ type singularities.
We use the package \texttt{chunkIE} \cite{chunkIE} to generate the quadratures 
for integrals over the panelization, which applies special quadrature rules generated
using the method of~\cite{bremer2010nonlinear}. Both packages use orthogonal polynomial bases
for interpolation and adaptive quadrature for nearly singular integrals.

For a smooth density, the resulting quadrature rules have an order of accuracy determined by the interpolation
order of accuracy, i.e. 8th-order for the triangular mesh and 16th-order for the panelization.
We provide a convergence plot for this scheme in \Cref{fig:cactus}, where the
discretized operators were applied to a smooth test density (a sum of Gaussians) on a disk 
and interpolated from the mesh nodes to a fixed point. The mesh was refined approximately uniformly and
the values from the discretized operators were compared to 
a high precision value obtained from Matlab's adaptive integration routine \texttt{integral2}. 
As noted above, it is not expected that the density $\mu$ will be smooth near the boundary.
To handle this, we discretize the geometry with extra levels of refinement near the boundary,
and the observed order of convergence of the overall scheme is lower than that of the quadrature rule.
    
	\subsection{Scaling to large examples}
	\label{sec:algorithm}
	If the maximum triangle diameter and panel length is bounded by~$h$, then the naive dense application of the system matrix will take~$O(h^{-4})$ time, and a dense solve will take $O(h^{-6})$ time. In order to avoid this, we use the precorrected-FFT method~\cite{phillips2002precorrected,nie2002fast,bruno2001fast,yan2011efficient,li2017precorrected} to apply our system matrix in~$O(h^{-2}\log(h^{-1}))$ time and solve the system iteratively using GMRES~\cite{saad1986gmres}. In short, the precorrected-FFT method in two dimensions is a method for computing $N$-body calculations:
	\begin{equation}\label{eq:Nbody}
		u_i = \sum_{j\neq i} K(\bx_i-\bx_j)\zeta_j,\quad i = 1,\ldots
	\end{equation}
	where $K$ is a translationally invariant kernel that is smooth away from $0$, the $\zeta_i$ are arbitrary complex numbers, and the $\bx_j$ are a collection of points in $\bbR^2$. In this work, $K$ will be~$\GS$, $G_\phi$, or some combination of their derivatives. The method is based on the observation that if the points~$\bx_i$ happen to be located on an equispaced grid, then~\eqref{eq:Nbody} is a convolution and thus can be computed efficiently using the FFT. In order to take advantage of this observation, let~$\bz_i$ be an equispaced grid covering $\Omega$. For any~$\by,$ let $X_{\by,n}$ be the $n\times n$ subset of the grid centered near $\by.$ For a given 
    $r>0$, let $\chi_i(\by)$ be equivalent charges such that
	\begin{equation}\label{eq:pre_cor_approx}
		K(\bx-\by) \approx \sum_{\bz_i \in X_{\by,n}} K(\bx-\bz_i) \chi_i(\by),
	\end{equation}
	for all~$\bx\in \bbR^2\setminus B_{r}(\by)$ with $X_{\by,n} \subset \overline{B_{r}(\by)}$. Similarly, let $\psi_i(\bx)$ denote effective weights so that
	\begin{equation}\label{eq:pre_cor_approx2}
		K(\bx-\by) \approx \sum_{\bz_j \in X_{\bx,n}}\sum_{\bz_i \in X_{\by,n}} K(\bz_j-\bz_i) \psi_j(\bx)\chi_i(\by),
	\end{equation}
    holds for $\bx$ and $\by$ satisfying $B_{r}(\bx) \cap B_{r}(\by) = \emptyset$. Returning to the $N$-body calculation, let $$Q_i = \{ \bx_{j} | B_{r}(\bx_{i}) \cap B_{r}(\bx_{j}) \neq \emptyset\}.$$ 
    We can then write
	\begin{align}
		u_i \approx &\sum_{\bz_{\ell} \in X_{\bx_i,n}} \psi_\ell(\bx_i) \left(\sum_{\bz_k \neq \bz_\ell} K(\bz_\ell - \bz_k) \left(\sum_{j \in \{j\,|\, \bz_k \in X_{\bx_j,n}\}} \zeta_j\chi_k(\bx_j) \right)\right) \label{eqn:precor}\\
		\nonumber &\quad + \sum_{j \in Q_i} \zeta_j \left[ K(\bx_i - \bx_j) - \sum_{\bz_k \in X_{j,n}}\sum_{\bz_\ell \in X_{i,n}} K(\bz_k-\bz_\ell)\psi_\ell(\bx_i)\chi_k(\bx_j) \right]. 
	\end{align}
	The first term in \eqref{eqn:precor} computes the sum as if the expansion \eqref{eq:pre_cor_approx2} were valid between every pair of source and target points $(\bx_i,\bx_j).$ The second term in \eqref{eqn:precor} corrects the error in the first sum for sources and targets which are too close by subtracting the contribution from the first sum for these points, and adding the correct contribution. The advantage of this decomposition is that the first term in \eqref{eqn:precor} can be computed by applying a sparse matrix with entries determined by the $\chi_k$ to the vector of charges $\vec{\zeta} =\{\zeta_j\}_1^N,$ applying a 2D-Toeplitz matrix (the sum over the equispaced grid points), and applying a second sparse matrix with entries given by the $\psi_\ell.$ The second term in \eqref{eqn:precor} corresponds to the application of a sparse matrix to $\vec{\zeta}.$ The sparse matrices have $O(N)$ non-zero entries, assuming the $|Q_i|$ are bounded independent of $N$, and the 2D-Toeplitz matrix may be applied in $\mathcal{O}(N \log{N})$ work, assuming the equispaced grid has $O(N)$ points.

    \begin{figure}
		\centering
		\includegraphics[width=0.3\linewidth]{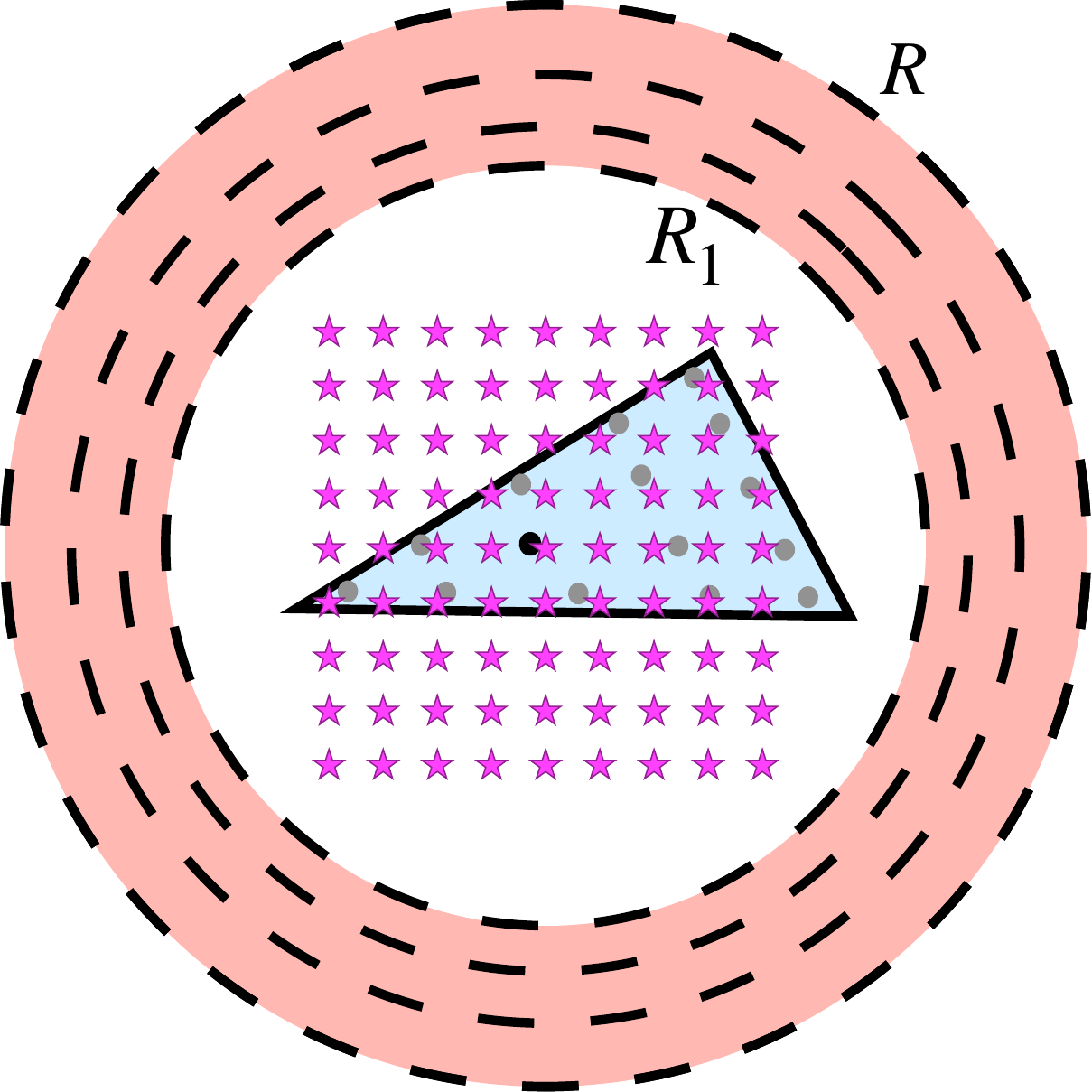}
		\caption{The geometry for the spreading algorithm. A given source (black dot) is spread to the
        nearest $n\times n$ subset of the equispaced grid (pink stars). The equivalent charges on the $n\times n$ grid are chosen 
        so that the fields agree on a series of proxy rings (salmon rings).}
		\label{fig:precorrected}
	\end{figure}

	To compute the coefficients $\chi_k,\psi_k$ we use a proxy point method \`{a} la \cite{xing2020interpolative,ye2020analytical,minden2017fast}. Here we focus on the computation of $\chi_k;$ $\psi_k$ can be calculated using an almost identical procedure. For a given source, $\by,$ fix an annulus with center at the center of $X_{\by,n}$, inner radius $R_1$, and outer radius $R,$ taking $R_1$ sufficiently large that $X_{\by,n}$ is contained within the inner circle;
    see \Cref{fig:precorrected} for an illustration. Consider $C$ circles in the annulus, with radii $r_1=R_1,\ldots,r_C=R$, chosen so that~$1/r_m$ are equally spaced, and suppose that each circle is discretized with $M$ equispaced points. Denote these $MC$ points, called \emph{proxy points,} by ${\bf w}_\ell.$ Suppose ${\bf c}$ is the least squares solution to $A {\bf c}= {\bf b}$, where the entries of the matrix $A \in \mathbb{C}^{MC \times n^2}$, and $b \in \mathbb{C}^{MC}$ are given by
    $$
        A_{\ell,k} = K({\bf w}_\ell - \bz_{j_k}) \,,\quad b_{\ell} = K({\bf w}_{\ell} - \by) \,.
    $$
    Here $\bz_{j_k}, k=1,\cdots,n^2$ are the gridpoints in $X_{\by,n}.$ The coefficients $\chi_j(\by)$ are given by $\chi_{j_k}(\by) = c_k,$ $k=1,\cdots, n^2$ with all other coefficients set to zero.  The coefficients $\chi_k,\psi_k$ can be precomputed and reused across GMRES iterations, and the precomputation step requires $\mathcal{O}(N)$ work. Observe that the matrix $A$ is the same for each source and its pseudo-inverse can be computed once and re-used to compute the coefficients across sources. 
    
     We select the parameters $R_1,R,M,C,n$ and the grid spacing, $\delta z$, heuristically (see, e.g., \cite{xing2020interpolative}).
    In general, if $R,M,C$ and $n$ are sufficiently large, then the error in \eqref{eq:pre_cor_approx} can be made arbitrarily small for any $r\geq R_1 + \sqrt{2} \delta z/2$. In this work, we set~$M=91$; $R_1$ to be $n-1$ times the grid spacing; $R$ to be the larger of $1.5 R$ and $\frac{2\pi}{\rho_1}$, where~$\rho_1$ is the largest real root of~$P$; and~$n=11$. The grid spacing is chosen in proportion to the diameter of the smallest element, and to balance the cost of the two terms in~\eqref{eqn:precor}. We note that for kernels arising from second-order elliptic PDEs, only one circle is required ($C=1$)~\cite{cheng2005compression}. For integro-differential equations more circles are typically required, though empirically in our numerical experiments $C=5$ is sufficient. 
    Further details on the effects of the different parameters will be discussed in a subsequent paper. We summarize the breakdown of the total cost of applying the slowest operator, the flexural $\Vphi$, into the time for the quadrature-correction generation (QG), the precomputation time for the pre-corrected FFT (PC), and the time for a single application of the operator using the precomputed quantities (A) in \Cref{table:timings}. We emphasize that the total computational cost for the full solve will be the sum of QG and PC, together with A multiplied by the total number of GMRES iterations.

\begin{table}[h!]
\centering
\begin{minipage}{0.25\linewidth}
    \centering
    \includegraphics[width=\linewidth]{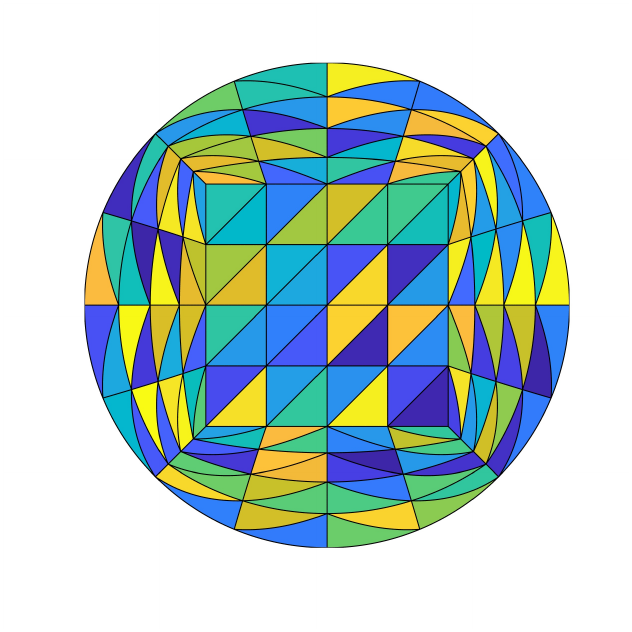}
\end{minipage}%
\hspace{0.01\linewidth}
\begin{minipage}{0.7\linewidth}
    \centering
    \begin{tabular}{|| c | c c c c||} 
     \hline
     N & 450 & $4\cdot 450$ & $4^2\cdot 450$ & $4^3\cdot 450$ \\ [0.1ex] 
     \hline 
     QG & 3.28 s & 12.5 s & 56.3 s & 231 s \\ 
     PC & 3.95 s & 8.97 s & 33.6 s & 128 s \\
     A  & 0.00103 s & 0.00199 s & 0.00542 s & 0.0186 s \\ [0.1ex] 
     \hline
    \end{tabular}
\end{minipage}
\caption{Time for quadrature generation (QG), precomputation (PC), and application (A) on a circular domain as the total number of points (N) increases. The mesh for $N = 4^2 \cdot 450$ is displayed on the left.}
\label{table:timings}
\end{table}

	\section{Examples and applications}
	\label{sec:examples}
	For scattering problems, we consider a total velocity potential given as the sum
	of an incident and a scattered velocity potential, $\phitot = \phiinc + \phi$. For the examples considered here, the incident field is given as a plane wave $\phiinc = \exp(i \mathbf{k}\cdot \br) \exp( - k z )$ where $k = |\mathbf{k}|$ is the real root $\rho_1$ of the dispersion relation in the exterior region. Due to the nonlocal nature of the boundary value problems, simple analytic solutions are difficult to construct in generic domains, therefore the error in the solution was checked by `self-convergence', i.e. solving the integral equation on both $6^{\rm th}$ and $8^{\rm th}$ order discretizations and comparing the results. The $8^{\rm th}$ order solution was computed on the $6^{\rm th}$ order mesh by evaluating the integral representation for $\partial_z \phi$ and by evaluating the formula (\ref{eq:vie}) for $\mu$.  Solutions and errors are shown for capillary-gravity waves with $\beta = 0.5, \gamma = 1$ in Figure \ref{fig:caperrorstar} and for flexural-gravity waves with $\alpha = 1.5, \gamma = -0.1, \nu = 0.3$ in Figure \ref{fig:flexerrorstar}. Inside the domain, the pointwise error in the density $\mu$ is plotted, while in the exterior the pointwise error in $\partial_z \phi$ is plotted. Both errors were normalized by the maximum value of $|\partial_z \phi|$.

    \begin{figure}[h]
		\centering
		\includegraphics[width=0.8\linewidth]{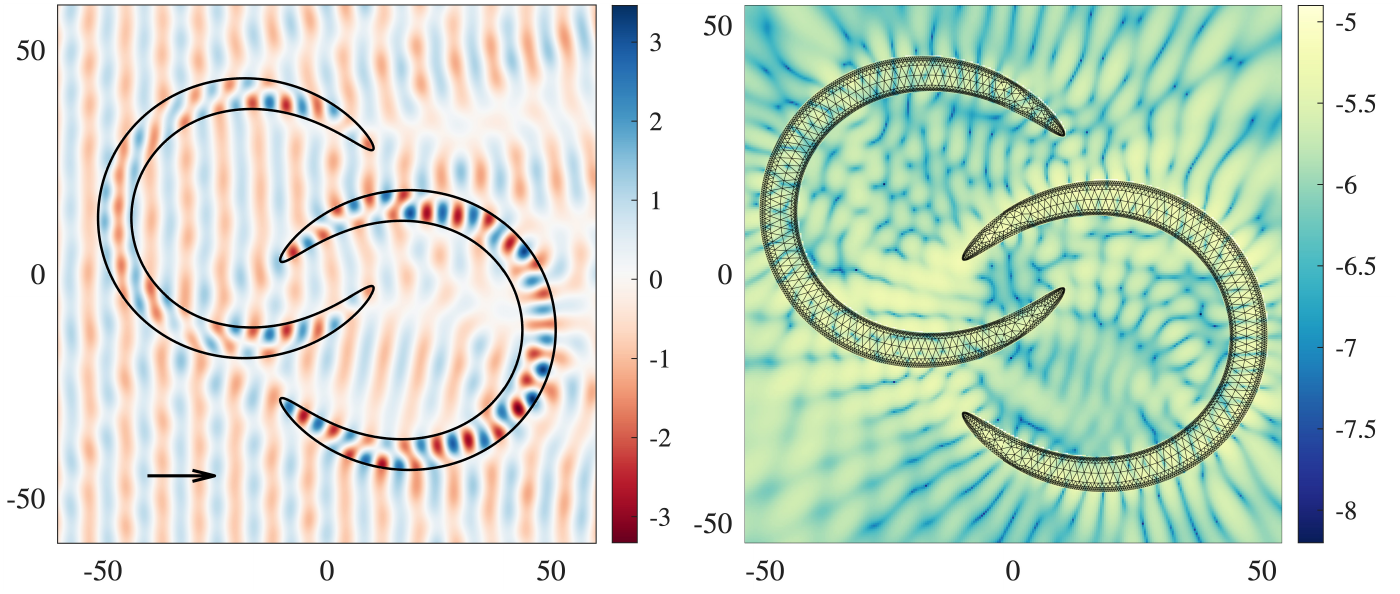}
		\caption{Exterior capillary-gravity waves with Neumann boundary conditions. The real part of $\partial_z \phi$ is plotted on the left, while the $\log_{10}$ relative self-convergence error is plotted on the right.  }
		\label{fig:caperrorstar}
	\end{figure} 
    
	\begin{figure}
		\centering
		\includegraphics[width=0.8\linewidth]{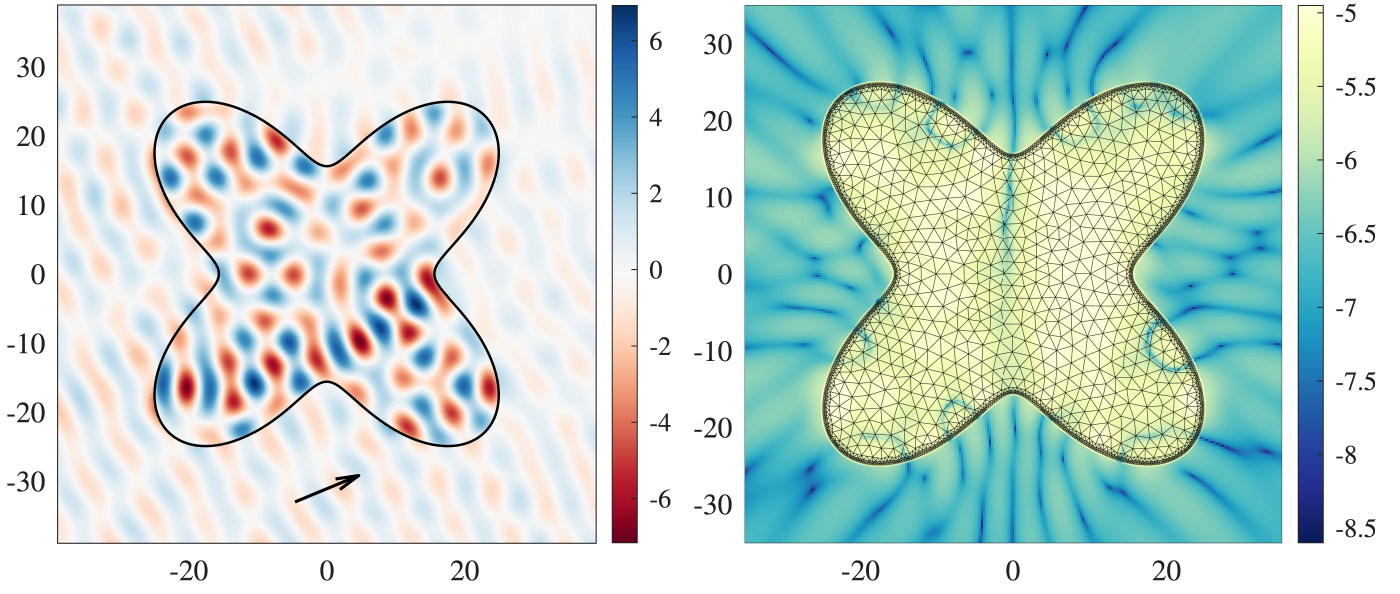}
		\caption{Exterior flexural-gravity waves with the free plate boundary conditions. The real part of $\partial_z \phi$ is plotted on the left, while the $\log_{10}$ relative error of self-convergence is plotted on the right. }
		\label{fig:flexerrorstar}
	\end{figure}

To demonstrate the qualitative differences in the surface wave problems and their associated exterior PDEs, the capillary-gravity, flexural-gravity, exterior Helmholtz, and exterior flexural wave problems were solved on an amorphous ``blob'' geometry, representing a surface contaminant (\Cref{fig:spikey_blob_plot}). The coefficient for the surface wave problems $\gamma = 0.25$ was chosen to be the same for both capillary and flexural problems, while the coefficients $\alpha = 0.38$ and $\beta = 0.49$ were chosen so that the positive real root $\rho_1$ was the same for both problems. The wavenumber $k$ for the exterior PDE problems was chosen to be the same $\rho_1$.

	\begin{figure}
		\centering
		\includegraphics[width=0.8\linewidth]{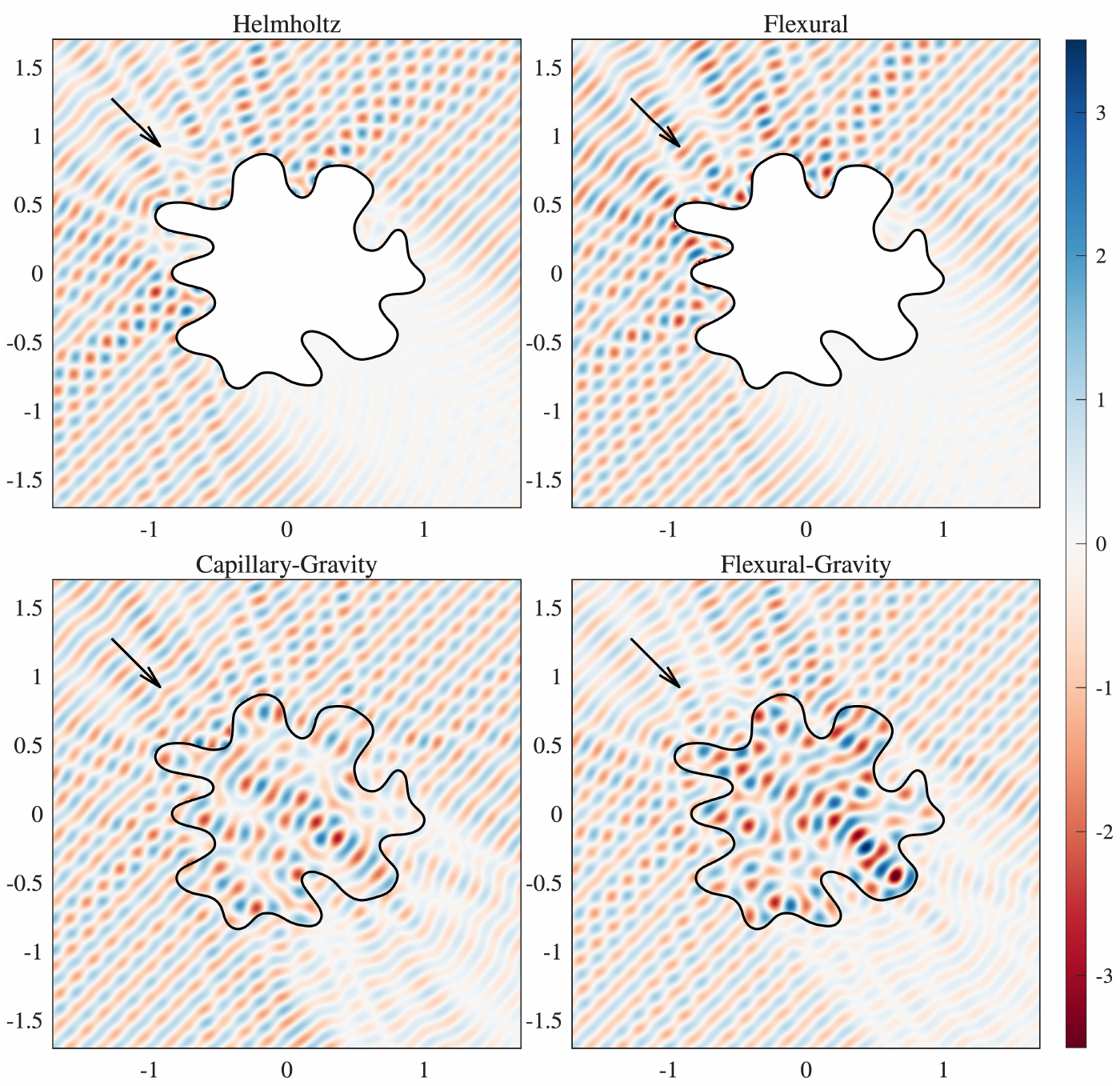}
		\caption{Plane wave scattering by an amorphous surface contaminant. Neumann BCs were imposed for the exterior Helmholtz problem and free plate BCs were imposed for the exterior flexural wave problem. The coefficients were chosen such that the wavenumber is $k = 38.1$ in the exterior and $k = 33.5$ in the interior (when it exists).}
		\label{fig:spikey_blob_plot}
	\end{figure}

    Lastly, the methods presented here are applied to modeling wave propagation around one of the largest ice `rifts', the WR2 rift on the Ross Ice Shelf in Antarctica. This rift belongs to a broader rift system that formed through tensile stress from ice flow \cite{ledoux2017structural}. The Ross Ice Shelf  is of particular interest to glaciologists because the deformation of its rifts gives rise to large iceberg calving events \cite{joughin2005calving}. Moreover, many studies have indicated that the growth of these rifts is triggered by tsunamis and other ocean wave forcing \cite{macayeal2006transoceanic,sergienko2010elastic,walker2013structural}. Icequakes and other seismic activity have been observed in the vicinity of these rifts during times of increased sea swell, leading to conjecture that flexural-gravity waves play an important role in their evolution \cite{bromirski2010transoceanic,chen2019ross}. 
    
    We model the intact ice shelf outside of the rift as a flexural-gravity wave medium while the rift interior, filled mostly with ice mélange and small bits of ice, is modeled using ordinary surface-gravity waves. For standard values of ice thickness (350 m) and incoming frequency (0.19 Hz), the dimensionless parameters for the flexural-gravity problem are given by $\alpha = 638$, $\gamma = -0.14$, and $\nu = 0.33$.  We take the incident field to be an incoming plane wave coming from the ocean. The total potential is shown in~\Cref{fig:ross_rift}. 
    The wave tends to be redirected along directions perpendicular to the boundary of the rift, and for this particular frequency, wave energy is localized within the western seam of the rift.

        \begin{figure}
            \centering
            \includegraphics[width=\linewidth]{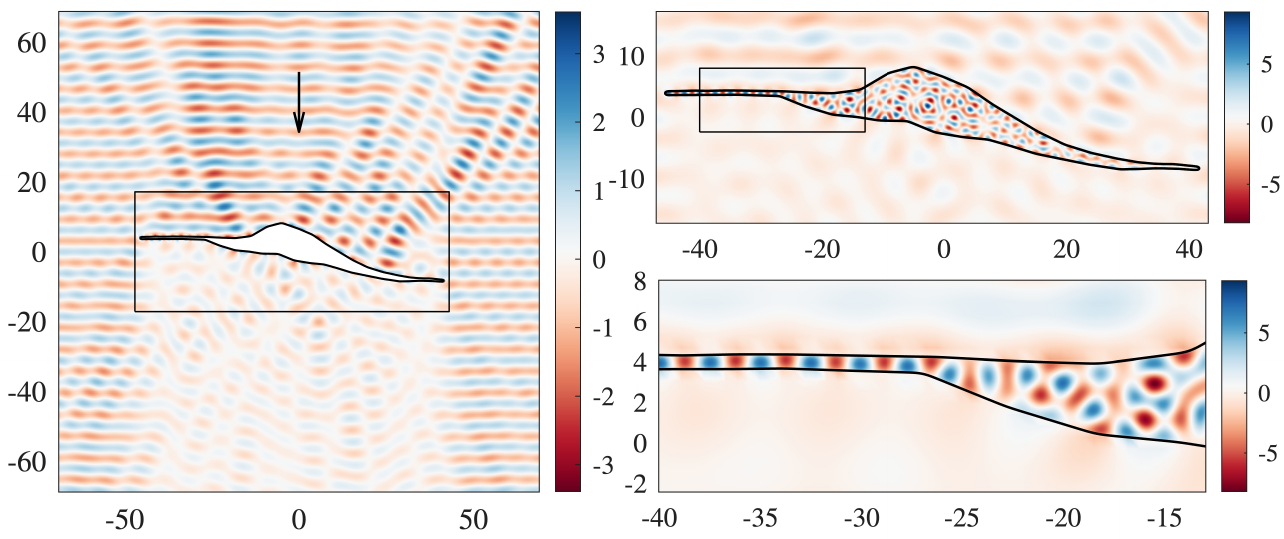}
            \vspace{-0.5cm}\caption{Flexural-gravity wave scattering from a rift in the Ross Ice Shelf. The left panel shows the field $\phi$ outside of the rift (arrow shows direction of the incident field from the ocean), while the top right panel shows the field inside the rift. The bottom right figure shows wave localization within one corner of the rift. The axis units are kilometers.}
            \label{fig:ross_rift}
        \end{figure}

	\section{Discussion and future work}
	\label{sec:discussion}
	
	In this paper we present a general framework for solving linear surface wave problems where the order of the derivatives in the surface-boundary condition experiences a jump between the interior and exterior regions. By representing the velocity potential as a single layer, this class of problems reduces to integro-differential equations on the unbounded surface of the fluid. Using the Green's function of the integro-differential operator, we derive second-kind integral equations with densities supported only in the interior region and its boundary. 
    
    We illustrate the application of this framework to problems involving capillary-gravity and flexural-gravity waves. Under certain natural assumptions, we prove that the resulting second-kind Fredholm equations are invertible. We then present a flexible and fast method for their numerical solution based on the precorrected FFT method. The scalability of our approach was demonstrated through several representative numerical examples, including a `rift' geometry based on the WR2 rift in the Ross Ice Shelf with realistic choices of physical parameters.
    
    While the methods discussed in this work are applied to models in which the exterior boundary condition has more derivatives than the interior boundary condition, similar techniques also apply when the interior region has more derivatives than the exterior region. This extension is being vigorously pursued. Moreover, the numerical methods presented in this work apply to a broader class of integro-differential equations, and nonlocal problems. While we use standard tools and extra refinement to treat the singularity present in $\mu$ near the boundary, it is relatively straightforward to develop more efficient tools based on tailored bases. A more specialized discretization approach, as well as the technical details of the fitted discretizations, fast quadrature generation, and numerical analysis of the proxy-annuli approach for computing precorrected FFT quadrature corrections, will be reported in an upcoming manuscript.

	\section{Acknowledgements}
	
The authors would like to thank Douglas MacAyeal, Zydrunas Gimbutas, and Mary Silber for many helpful discussions. JGH was partially supported by a Sloan Research Fellowship. This work was supported by the donors of ACS Petroleum Research Fund under New Directions Grant 68292-ND9.

	\appendix 
	
	\frenchspacing
	
	\bibliographystyle{siamplain}
	\bibliography{refs}
	
\end{document}